\def\namedlabel#1#2{\begingroup
    #2%
    \def\@currentlabel{#2}%
    \phantomsection\label{#1}\endgroup
}
\theoremstyle{definition}
\newtheorem{defn}{Definition}[section]
\theoremstyle{plain}
\newtheorem{lem}[defn]{Lemma}
\newtheorem*{thm*}{Theorem}
\newtheorem{prop}[defn]{Proposition}
\theoremstyle{plain}
\newtheorem{cor}[defn]{Corollary}
\theoremstyle{remark}
\newtheorem{rem}[defn]{Remark}
\theoremstyle{definition}
\numberwithin{defn}{section}
\newenvironment{manualtheorem}[1]{%
  \manualtheoreminner
}{\endmanualtheoreminner}
\newcommand{\eps}{\varepsilon}
\newcommand{\quand}{\quad\text{ and } \quad}
\title{Doubly Intermittent Full Branch Maps \\ with Critical Points and Singularities}
\author[3]{Douglas Coates\footnote{\href{mailto:dc485@exeter.ac.uk}{\texttt{dc485@exeter.ac.uk}}}}
\author[1]{Stefano Luzzatto \footnote{\href{mailto:luzzatto@ictp.it}{\texttt{luzzatto@ictp.it}}}} 
\author[1,2]{
Mubarak Muhammad
\footnote{\href{mailto:mmuhamma@ictp.it}{\texttt{mmuhamma@ictp.it}}, \href{mailto:mubarak@aims.edu.gh}{\texttt{mubarak@aims.edu.gh}}}}
\affil[1]{Abdus Salam International Centre for Theoretical Physics (ICTP), Trieste, Italy}
\affil[2]{Scuola Internazionale Superiorie di Studi Avanzati (SISSA), Trieste, Italy}
\affil[3]{University of Exeter, Exeter, UK}
\date{26 September 2022}
\begin{document}
\maketitle
\begin{abstract}
We study a class of one-dimensional full branch maps admitting two indifferent fixed points as well as critical points and/or unbounded derivative.  Under some mild assumptions we prove the existence of a unique invariant mixing absolutely continuous probability measure,  study its rate of  decay of correlation and prove a number of limit theorems. 
\end{abstract}

\setcounter{tocdepth}{2}

\section{Introduction}

The purpose of this paper is to study the ergodic properties of a large class of full branch interval maps with two branches, including maps with 
\emph{two}  indifferent fixed points (which, as we shall see below, affects both the results and the construction of the induced map which we require). We also allow the derivative to go to \emph{zero} as well as to \emph{infinity} at the boundary between the two branches, and we do not assume any symmetry, even the domains of the branches can be of arbitrary length.
Such maps are known to exhibit a wide range of behaviour from an ergodic point of view and many of them have been extensively studied, we give a detailed literature review below. 
 
\begin{figure}[h]
 \centering
  \begin{subfigure}{.33\textwidth}
    \centering
    \includegraphics[width=6cm]{./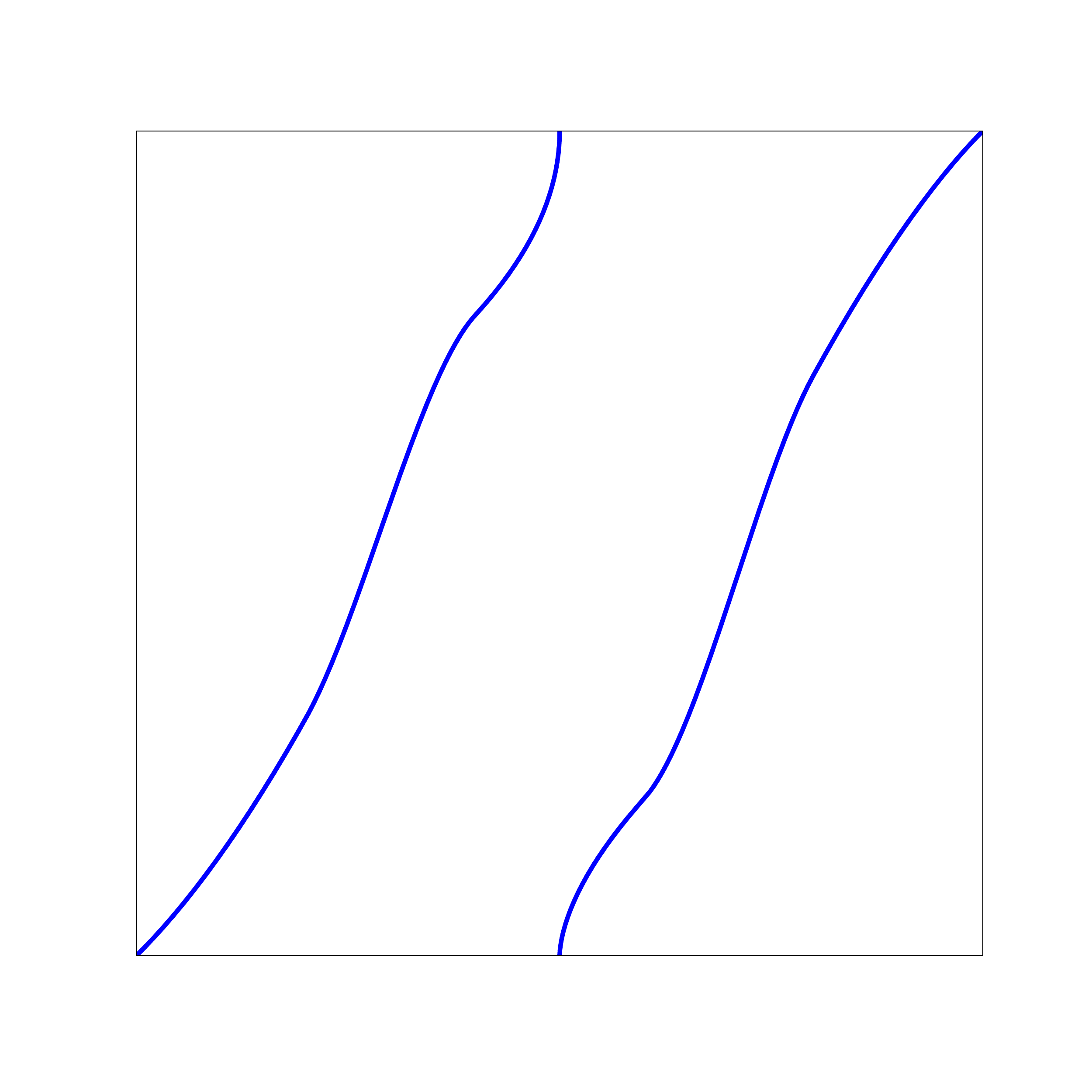}
  \end{subfigure}%
  \hspace{-1em}
  \begin{subfigure}{.33\textwidth}
    \centering
    \includegraphics[width=6cm]{./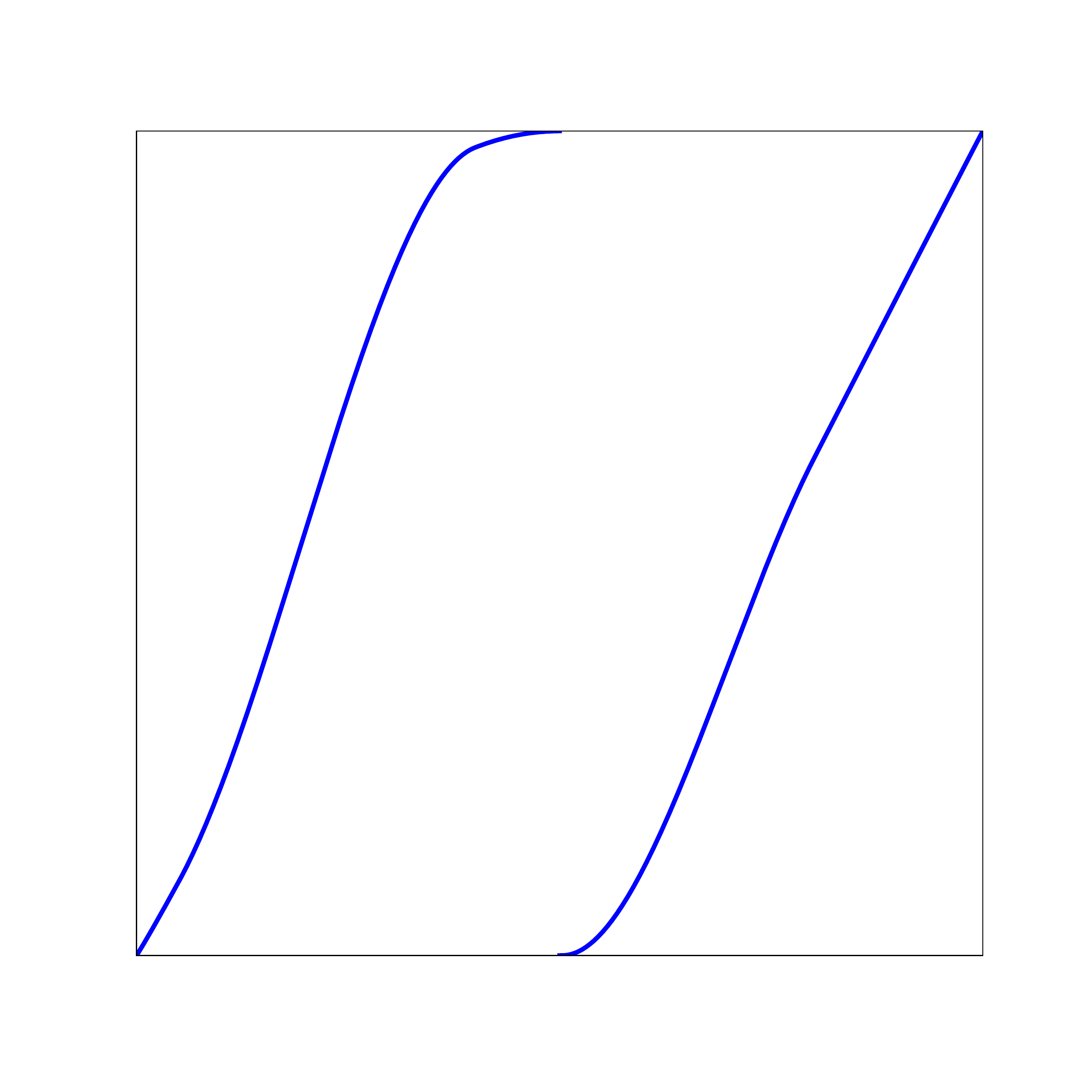}
  \end{subfigure}
   \hspace{-1em}
  \begin{subfigure}{.33\textwidth}
    \centering
    \includegraphics[width=6cm]{./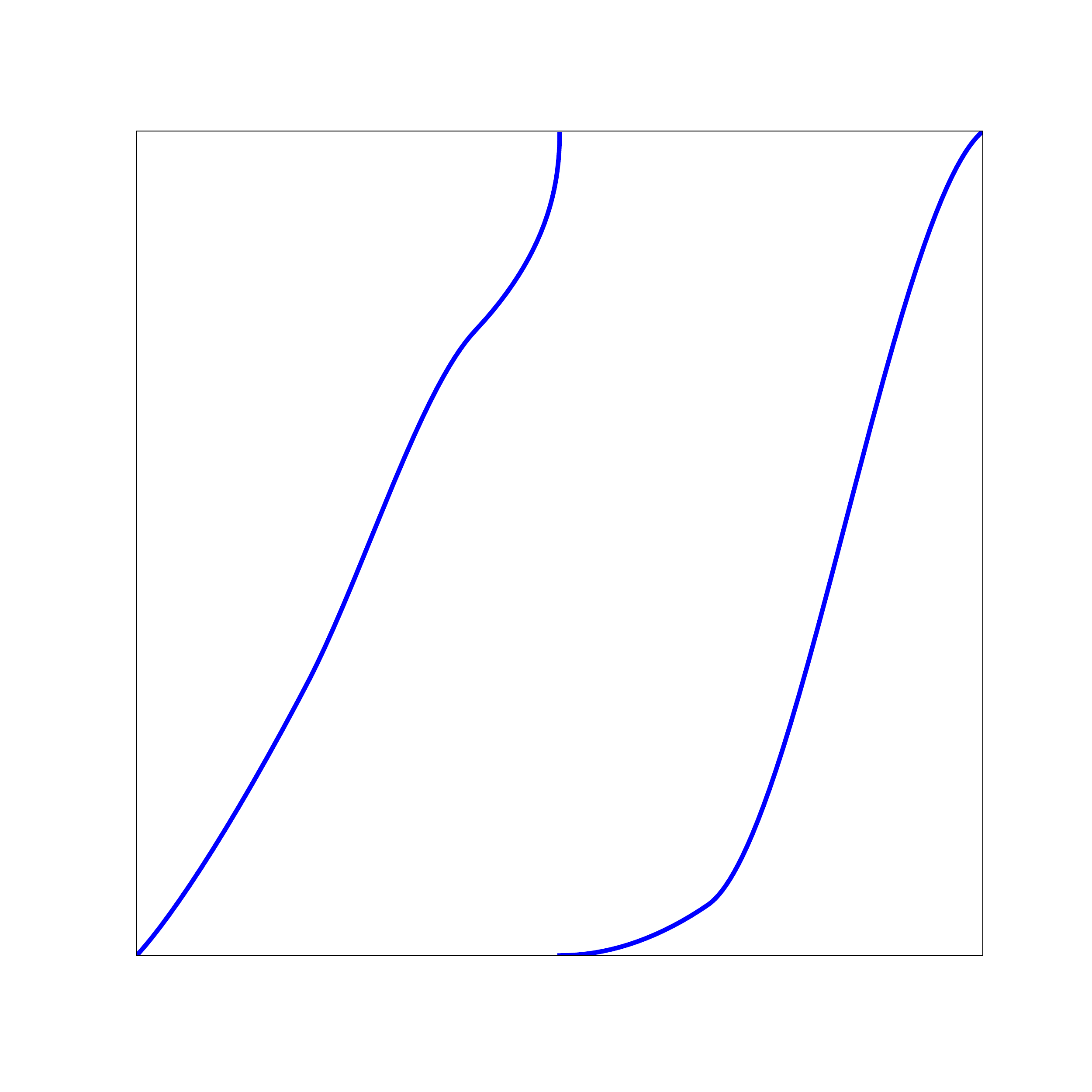}
  \end{subfigure}
  \caption{Graph of $g$ for various possible values of parameters.}\label{fig:fig}
\end{figure}

In Section~\ref{sec:full branch} we give the precise definition of the class of maps  we consider, which includes many cases already studied in the literature as well as many cases which have not yet been studied; in section~\ref{sec:results} we give the precise statements of our results; in section \ref{sec:lit-review} we give a literature review of related results and include specific examples of maps in our family; in Section \ref{sec:outline} we give a detailed outline of our proof, emphasising several novel aspects of our construction and arguments. Then in Section \ref{sec:induced} we give the construction and estimates related to our ``double-induced'' map and in Section \ref{sec:statistical} apply these estimates to complete the proofs of our results

\subsection{Full Branch Maps}
\label{sec:full branch}

We start by  defining the class of maps which we consider in this paper.   Let \( I, I_-, I_+\) be compact intervals, let  \( \mathring I, \mathring I_-, \mathring I_+\) denote their interiors, and suppose that  \(I = I_{-}\cup I_{+}  \) and \( \mathring I_-\cap \mathring I_+=\emptyset\). 

\begin{description}
  \item{\namedlabel{itm:A0}{\textbf{(A0)}}} 
\( g: I \to I \) is \emph{full branch}: the restrictions \(g_{-}: \mathring I_{-}\to \mathring I\) and \(g_{+}: \mathring I_{+}\to \mathring I\) are  orientation preserving \( C^{2} \) diffeomorphisms and the only  fixed points are the endpoints of \( I \). 
\end{description}

To simplify the notation we will assume that  
\[ I=[-1, 1],  \quad 
I_{-}=[-1, 0],
\quad 
 I_{+}=[0,1]\]
  but our results and proofs will be easily seen to hold in the general setting.

\begin{description}
  \item{\namedlabel{itm:A1}{\textbf{(A1)}}} 
    There exists constants $\ell_1,\ell_2 \geq 0$, \( \iota, k_1,k_2 , a_1,a_2,b_1,b_2 > 0 \)  such that: 

    \begin{enumerate}[(i)]
      \item
        if \( \ell_{1}, \ell_{2}\neq 0 \) and \( k_{1}, k_{2} \neq  1\), then 
        \begin{equation}\label{eqn_1}
          g(x) =
          \begin{cases} 
            x+b_1{(1+x)}^{1+\ell_1} &  \text{in }  U_{-1},\\
            1-a_1{|x|}^{k_1} & \text{in } U_{0-}, \\
            -1+a_2{x}^{k_2} & \text{in }  U_{0+}, \\
            x-b_2{(1-x)}^{1+\ell_2} &  \text{in }  U_{+1},
          \end{cases}
        \end{equation}
          where
          \begin{equation}\label{eq:U}
            U_{0-}:=(-\iota, 0],
            \quad
            U_{0+}:=[0, \iota), 
            \quad
            U_{-1}:=g(U_{0+}), 
            \quad
            U_{+1}:=g(U_{0-}).
          \end{equation}
      \item
       If  \( \ell_{1}=0 \)  and/or \( \ell_2=0\) we replace the corresponding lines in~\eqref{eqn_1} with 
          \begin{equation}\label{eq:A1b}
            g|_{U_{\pm 1}}(x) \coloneqq \pm 1 + (1 + b_1) ( x + 1) \mp  \xi (x),
          \end{equation}
          where \( \xi \) is \( C^2\),  \(\xi(\pm 1)= 0, \xi'(\pm 1)=0\), and \( \xi''(x)>0\) on \( U_{-1}\) and  \( \xi''(x)<0\) on \( U_{+1} \).  \\
          If 
\( k_1 = 1 \) and/or \( k_2 = 1 \), then we replace the corresponding lines in~\eqref{eqn_1} with the assumption that  
\(
 g'(0_-) = a_1>1\) and/or \( g'(0_+) = a_2>1
 \)
   respectively, and that \( g \) is  monotone in the corresponding neighbourhood. 
    \end{enumerate}
\end{description}

\begin{rem}
It is easy to see that the definition in \eqref{eqn_1} yields maps with dramatically different derivative behaviour depending on the values of \( \ell_1, \ell_2, k_1, k_2\), including having neutral or expanding fixed points and points with zero or infinite derivative, see Remark \ref{rem:def} for a detailed discussion. For the moment we just remark that the assumptions described in part {ii)} of condition \ref{itm:A1} are  consistent with~\eqref{eqn_1} but significantly relax the definition given there as in these cases \eqref{eqn_1} would imply that  the map is affine in the corresponding neighbourhood, whereas we only need expansivity. In particular this allows us  to  include  uniformly expanding maps in our class of maps. In the calculations below we will explicitly consider the cases \( \ell_{1}=0 \)  and/or \( \ell_2=0\), which correspond to assuming that one or both the fixed points are expanding instead of neutral,  since they yield different estimates (several quantities decay exponentially rather than polynomially in these cases) and different results, and still include some maps which, as far as we know, have not been studied in the literature. For simplicity, on the other hand, we will not consider explicitly the cases \( k_1 = 1 \) and/or \( k_2 = 1 \), which just correspond to assuming the derivative at one or both sides of the discontinuity is finite instead of being zero or infinite.  These correspond to much simpler special cases and the required estimates follow by arguments  which are very similar to arguments and calculations we give here, and which are essentially already considered in the literature, but treating them explicitly would require a significant amount of additional notation and calculations.
\end{rem}

Our final assumption can be intuitively thought of as saying  that \( g \) is uniformly expanding outside the neighbourhoods \( U_{0\pm}\) and \( U_{\pm 1}\). This is however much stronger than what is needed, and therefore we formulate a weaker and more general assumption for which we need to describe some aspects   of the topological structure of maps satisfying condition \ref{itm:A0}. First of all we define 
\begin{equation}\label{eq:Delta0}
\Delta^-_0:= 
g^{-1}(0,1)\cap I_-
\quand 
\Delta^+_0:= 
 g^{-1}(-1,0)\cap I_+.  
\end{equation}
Then we define iteratively, for every \( n \geq 1 \),  the sets 
\begin{equation}\label{eq:Delta}
 \Delta_n^{-}:= g^{-1}(\Delta_{n-1}^{-})\cap I_{-}
 \quand 
 \Delta_n^{+}:= g^{-1}(\Delta_{n-1}^{+})\cap I_{+}
 \end{equation}
 as the \( n\)'th preimages of \( \Delta_0^-, \Delta_0^+\) inside the intervals \(I_{-}, I_{+} \).   It follows from \ref{itm:A0} that 
\( 
 \{ \Delta_n^{-}\}_{n\geq 0}
\) 
and \( 
 \{ \Delta_n^{+}\}_{n\geq 0}
\) 
are $\bmod\;0$ partitions of \(I_{-}\) and \(I_{+}\) respectively, and that the partition elements depend \emph{monotonically} on the index  in the sense that \( n > m \) implies that \( \Delta_n^{\pm}\) is closer to \( \pm 1\) than \( \Delta_m^{\pm}\),  in particular the only accumulation points of these partitions are \( -1\) and \( 1 \) respectively.  
Then, for every \( n \geq 1 \),  we let 
\begin{equation}\label{eq:delta}
 \delta_{n}^{-}:= 
 g^{-1}(\Delta_{n-1}^{+}) \cap  \Delta_0^{-} 
 \quand 
 \delta_{n}^{+}:= 
 g^{-1}(\Delta_{n-1}^{-}) \cap  \Delta_0^{+}.
\end{equation}
 Notice  that  
 \(
 \{ \delta_n^{-}\}_{n\geq 1}  
\) 
and 
\( 
 \{ \delta_n^{+}\}_{n\geq 1} 
\)
are $\bmod\; 0$ partitions of \( \Delta_0^-\) and \( \Delta_0^+\)  respectively and also in these cases  the partition elements depend monotonically on the index in the sense that \( n > m \) implies that \( \delta_n^{\pm}\) is closer to \( 0 \) than \( \delta_m^{\pm}\),   (and in particular the only accumulation point of these partitions is 0). Notice moreover, that 
\[
 g^{n}(\delta_{n}^{-})= \Delta_{0}^{+} \quad\text{and} \quad   g^{n}(\delta_{n}^{+})= \Delta_{0}^{-}.  \]  
We now define two non-negative integers \( n_{\pm}\) which depend on the positions of the partition elements \( \delta_{n}^{\pm}\) and on the sizes of the neighbourhoods \( U_{0\pm}\) on which the map \( g \) is explicitly defined.  
If  \( \Delta_0^{-} \subseteq U_{0-}\) and/or  \( \Delta_0^{+} \subseteq U_{0+}\), we define   \( n_{-}= 0  \) and/or \( n_{+}=0\) respectively, otherwise we let 
\begin{equation}\label{eq:n+-}
 n_{+} := \min \{n :\delta_{n}^{+} \subset U_{0+} \} \quand 
  n_{-} := \min \{n :\delta_{n}^{-} \subset U_{0-} \}.  
 \end{equation}
We can now formulate our final assumption as follows. 
  \begin{description}
    \item[\namedlabel{itm:A2}{\textbf{(A2)}}]  
There exists a \( \lambda >  1 \) such that  for all   \( 1\leq n\leq n_{\pm}\)   and for all \(    x \in \delta_n^{\pm} \)  we have \(  (g^n)'(x) > \lambda\). 
 \end{description}
Notice that \ref{itm:A2} is an expansivity condition for points outside the neighbourhoods  \( U_{0\pm}\) and \( U_{\pm 1}\) but is much weaker than assuming that the derivative of \( g\) is greater than 1 outside these neighbourhoods, which would be unnatural and unnecessarily restrictive in the presence of critical points. 
 This completes the set of conditions which we require, and for  convenience we  let  
  \[
  \widehat{\mathfrak{F}} \coloneqq \{ g : I \to I  \text{ which satisfy \ref{itm:A0}-\ref{itm:A2}}\}
  \]



The class \( \widehat{\mathfrak{F}} \) contains  many  maps which have been studied in the literature, including uniformly expanding maps and various well known intermittency maps with a single neutral fixed point. We will give a more in-depth literature review in Section~\ref{sec:lit-review}. 
Here we make a few technical remarks concerning these assumptions before proceeding to state our results in the next subsection.


\begin{rem}[Remark on notation]
  To simplify many statements which will be made through the paper, it will be useful to recall some relatively standard notation as follows.
  Given  sequences   \((s_{n})\) and \((t_{n})\) of non-negative terms, we write \( s_{n} = O(t_{n})\), or \( s_{n} \lesssim t_{n}\), if   $s_{n}/t_{n} $ is uniformly bounded  above; \( s_{n}\approx t_{n}\) if  \( s_{n}/t_{n} \) is uniformly bounded away from 0 and \( \infty\); 
  \( s_{n} = o(t_{n}) \) 
  if   $s_{n}/t_{n} \to 0$ as $n \to \infty$;  and \( s_{n}\sim t_{n} \)  if  \(s_{n}/t_{n}= 1+o(1)\), i.e.  if  \( s_{n}/t_{n}\)  converges to 1 as \( n \to \infty \). 
\end{rem}

\begin{rem}\label{rem:def}
Changing the
parameter values \( \ell_{1}, \ell_{2}, k_{1}, k_{2} \) gives rise to maps with quite different characteristics.   For example, if \( \ell_{1}>0\), we have 
\begin{equation}\label{deq_4}
g'|_{U_{-1}}(x)= 1+b_1(1+\ell_1)(1+x)^{\ell_1}
\quad\text{ and } \quad 
g''|_{U_{-1}}(x) = b_1(1+\ell_1)\ell_1 (1+x)^{\ell_1-1}. 
\end{equation}
Then  \( g'(-1)= 1 \)  and the fixed point \( -1 \) is a \emph{neutral} fixed point.   Similarly, when \( \ell_{2}>0\) the fixed point \( 1 \) is a neutral fixed point. 
On the other hand, when \( \ell_1=0\), from \eqref{eq:A1b} we have 
\begin{equation}\label{eq:ell0}
g'|_{U_{-1}}(x)= 1+b_1 + \xi'(x)
\quad\text{ and } \quad 
g''|_{U_{-1}}(x) =\xi''(x)
\end{equation}
and thus the fixed point \( -1 \) is \emph{hyperbolic repelling} with \( g'(-1)=1+b\). 
When  \( k_{1}\neq 1 \) we have 
\begin{equation}\label{deq_5}
g'|_{U_{0-}}(x)= a_1k_1|x|^{k_1-1}
\quad\text{ and } \quad 
g''|_{U_{0-}}(x)= a_1k_1(k_1-1)|x|^{k_1-2}. 
\end{equation}
  Then \( k_{1} \in (0,1) \) implies that \( |g'|_{U_{0-}}(x)|\to \infty \) as \( x \to 0 \), in which case we say that \( g|_{U_{0-}} \) has a (one-sided) \emph{singularity} at 0, whereas \( k_{1} > 1 \)   implies that \( |g'|_{U_{0-}}(x)|\to 0\) as \( x \to 0 \),  and therefore we say that \( g|_{U_{0-}} \) has a (one-sided) \emph{critical point} at 0. Analogous observations  hold for the various values of \( \ell_2\) and \( k_2\) and 
  Figure \ref{fig:fig} shows the graph of \( g \) for various  combinations of these exponents. 

For future reference we mention also some additional properties which follow from  \ref{itm:A1}. 
First of all notice that if \( \ell_1 \in (0,1)\) we have  \(g''(x) \to \infty\) but if  \( \ell_1 >1 \) we have  \(g''(x) \to 0\),  as \( x\to -1\) and, as we shall see, this qualitative difference in the higher order derivative plays a crucial role in the ergodic properties of \( g \). Analogous observations apply to \( g|_{U_{1}}\) when \( \ell_{2}>0\). Secondly, notice also that for every \( x\in U_{-1}\)  we have 
\begin{equation}\label{eq:2ndder1}
{g''(x)}/{g'(x)}  \lesssim (1+x)^{\ell_1-1} 
\end{equation}
and an analogous bound holds for \( x\in U_{1}\). Similarly,  in \(U_0\) we have 
\begin{equation}\label{deq_55}
{|g''(x)|}/{|g'(x)|} \lesssim  x^{-1},
\end{equation}
and notice that in this case the bound does not actually depend on the value of \( k_1\) or \( k_2\)  and in particular  does not depend on whether we have a critical point or a singularity.  Finally, we note that when \( \ell_1=0\), it follows from \eqref{eq:ell0} and from the assumption that  \( \xi''(x) > 0 \)  that 
\begin{equation}\label{eq:xider}
\xi'(x) > \xi(x)/(1+x)
\end{equation}
  for every \( x \in U_{-1}\). Indeed, notice that \( 1+x\) is just the distance between \( x \) and \( -1\) and thus \( \xi(x)/(1+x)\) is the slope of the straight line joining the point \( (-1, 0 ) \) to \( (x, \xi(x)) \) in the graph of \(\xi \), which is exactly the \emph{average} derivative  of \( \xi \) in the interval \([-1, x]\). Since \( \xi'' > 0 \), the derivative is monotone increasing and thus the derivative \( \xi' \) is maximal at the endpoint \( x\), which implies \eqref{eq:xider}. The same statement of course holds for \( \ell_2=0\) and for all \( x\in U_{+1} \).
\end{rem}

\subsection{Statement of Results}
\label{sec:results}

Our first  result is completely general and applies to all maps in \( \widehat{\mathfrak{F}} \).

\begin{manualtheorem}{A}\label{thm:main1}
Every \( g \in \widehat{\mathfrak{F}} \) admits a unique (up to scaling by a constant) invariant measure which is absolutely continuous with respect to Lebesgue; this measure is \( \sigma \)-finite and equivalent to Lebesgue.
\end{manualtheorem}

This is perhaps not completely unexpected but also certainly not obvious in the full generality of the maps in \( \widehat{\mathfrak{F}} \), especially for maps which admit critical points (which can, moreover, be of arbitrarily high order). Our construction gives some additional information about the measure given in Theorem \ref{thm:main1}, in particular the fact that  its density with respect to Lebesgue is locally Lipschitz and unbounded only at the endpoints \( \pm 1\). We will show that, depending on the exponents \( k_{1}, k_{2}, \ell_{1}, \ell_{2} \),  the density may or may not be integrable and so the measure may or may not be finite. More specifically, let 
\[
  \beta_1 \coloneqq k_2 \ell_1,\quad
  \beta_2 \coloneqq k_1 \ell_2,
  \quand  \beta \coloneqq \max\{ \beta_{1}, \beta_{2} \}.
\]
We will show that the density  is Lebesgue integrable at -1 or 1 respectively if and only if \( \beta_{1}\) and \( \beta_{2}\) respectively are \( <1\). In particular, letting 
\[
\mathfrak F := \{ g\in \widehat {\mathfrak F} \text{ with } \beta < 1 \}
\]
we have the following result. 
\begin{manualtheorem}{B}\label{thm:main2}
A map  \( g \in \widehat{\mathfrak F}\)  admits a unique ergodic invariant \emph{probability} measure \( \mu_g \) absolutely continuous with respect to (indeed equivalent to) Lebesgue \emph{if and only if} \( g \in \mathfrak F\).
\end{manualtheorem}

Notice that the condition \( \beta<1\) is a restriction only on the \emph{relative} values of \( k_{1}\) with respect to \( \ell_{2}\) and of \(k_{2}\) with respect to \( \ell_{1}\).  It still allows  \( k_{1}\) and/or \( k_{2} \) to be \emph{arbitrarily large}, thus allowing arbitrarily ``degenerate'' critical points, as long as the  corresponding exponents \( \ell_{2} \) and/or \( \ell_{1}\) are sufficiently small, i.e. as long as the corresponding neutral fixed points are not too degenerate.    

\medskip

We now give several non-trivial results  about the statistical properties maps  \( g \in  \mathfrak F
\) with respect to the probability measure  \( \mu_g \). To state our first result recall that 
the \emph{measure-theoretic entropy} of \( g \) with respect to the measure \( \mu \) is defined as 
\[
h_{\mu}(g):= \sup_{\mathcal P} \left\{\lim_{n\to\infty} \frac 1n \sum_{\omega_{n}\in \mathcal P_{n}} - \mu(\omega_{n})\ln \mu(\omega_{n})\right\}
\]
where the supremum is taken over all finite measurable partitions \( \mathcal P\) of the underlying measure space and \( \mathcal P_{n} := \mathcal P \vee f^{-1}\mathcal P\vee \cdots \vee f^{-n}\mathcal P \) is the dynamical refinement of \( \mathcal P\) by \( f \).

\begin{manualtheorem}{C}\label{thm:mainentropy}
  Let \( g \in \mathfrak F\). Then \( \mu_g \)  satisfies the  \emph{Pesin entropy formula:}
  \(
      h_{\mu_g}(g)=\int \log |g'|d\mu_g.
  \)
\end{manualtheorem}

For H\"older continuous  functions \( \varphi, \psi: [-1,1] \to \mathbb{R} \)  and \( n \geq 1 \), we define the \emph{correlation function}
        \[
       \mathcal C_{n}(\varphi, \psi):= \left| \int \varphi \psi\circ g^{n} d\mu - \int \varphi d\mu \int \psi d\mu \right|.
        \]
        It is well known that \( \mu_g \) is \emph{mixing} if and only if \( \mathcal C_n(\varphi, \psi) \to 0 \) as \( n \to \infty\). We say that \( \mu_g \) is \emph{exponentially mixing}, or satisfies \emph{exponential decay of correlations}  if there exists a \( \lambda > 0 \) such that  for all H\"older continuous functions \( \varphi, \psi\) there exists a constant \( C_{\varphi, \psi}\) such that \(  \mathcal C_{n}(\varphi, \psi) \leq C_{\varphi, \psi} e^{-\lambda n}\). We say that \( \mu_g \) is \emph{polynomially mixing}, or satisfies \emph{polynomial decay of correlations}, with rate \( \alpha >0\) if for all H\"older continuous functions \( \varphi, \psi\) there exists a constant \( C_{\varphi, \psi}\) such that \(  \mathcal C_{n}(\varphi, \psi) \leq C_{\varphi, \psi} n^{-\alpha}\).

\begin{manualtheorem}{D}\label{thm:maincorrelations}
  Let \( g \in \mathfrak F\). If \( \beta = 0\) then \( \mu_g\) is exponentially mixing, if \( \beta\in (0,1) \) then \( \mu_g\) is polynomially mixing with rate \( (1-\beta)/\beta\).
\end{manualtheorem} 
  
  Notice that the polynomial rate of decay of correlations \( (1-\beta)/\beta\) itself decays to 0 as \( \beta \) approaches 1, which is the \emph{transition parameter} at which the invariant measure ceases to be finite. Intuitively, as \( \beta\to 1\), the measure, while still equivalent to Lebesgue, is increasingly concentrated in neighbourhoods of the neutral fixed points, which \emph{slow down} the decay of correlations.

  \medskip
Our final result concerns a number of \emph{limit theorems} for maps  \( g \in \mathfrak F\), which depend on the parameters of the map and, in some cases, also on some additional regularity conditions.  These are arguably some of the most interesting results of the paper, and those in which the existence of two indifferent fixed points, instead of just one, really comes into play, giving rise to quite a complex scenario of possibilities. 
We start by recalling the relevant definitions. For integrable functions \( \varphi \) with \( \int \varphi d\mu = 0 \) we define the following limit theorems.

\begin{description}

  \item[\namedlabel{itm:CLT}{CLT}]
    \( \varphi\) satisfies a \emph{central limit theorem} with respect to \( \mu \) if there exists a \( \sigma^2 \geq 0 \) and a \( \mathcal{N}(0,\sigma^2) \) random variable \( V \) such that 
    \[
      \lim_{n\to \infty} \mu \left( \frac{ \sum_{ k = 0 }^{ n - 1 } \varphi \circ g^k  }{ \sqrt{n} } \leq x \right)
      = \mu ( V \leq x ),
    \]
    for every \( x \in \mathbb{R} \) for which the function \( x \mapsto \mu ( V_{\sigma^2} \leq x ) \) is continuous.

  \item[\namedlabel{itm:CLT-ns}{CLT\textsubscript{\text{ns}}}]
    \( \varphi \) satisfies a \emph{non-standard central limit theorem} with respect to \( \mu \) if there exists a \( \sigma^2 \geq 0 \) and a \( \mathcal{N}(0,\sigma^2) \) random variable \( V \) such that 
    \[
      \lim_{n\to \infty} \mu \left( \frac{ \sum_{ k = 0 }^{ n - 1 } \varphi \circ g^k  }{ \sqrt{n\log n} } \leq x \right)
      = \mu ( V \leq x ),
    \]
    for every \( x \in \mathbb{R} \) for which the function \( x \mapsto \mu ( V_{\sigma^2} \leq x ) \) is continuous.

  \item[\namedlabel{itm:SL-alpha}{SL}\textsubscript{\( \alpha \)}]
    \( \varphi \) satisfies a \emph{stable law} of index  \( \alpha \in (1,2) \), with respect to a measure \( \mu \), if  there exists a stable random variable \( W_{\alpha} \) such that
    \[
      \lim_{n\to \infty} \mu \left( \frac{ \sum_{ k = 0 }^{ n - 1 } \varphi \circ g^k }{ n^{1/\alpha} } \leq x \right)
      = \mu ( W_{\alpha} \leq x ),
    \]
    for every \( x \in \mathbb{R} \) for which the function \( x \mapsto \mu ( W_{\alpha} \leq x ) \) is continuous.
\end{description}
Finally, we say that an observable \( \varphi : [-1,1] \to \mathbb{R} \) is a \emph{co-boundary} if there exists a measurable function \( \chi : [-1,1] \to \mathbb{R} \) such that \( \varphi = \chi\circ g - \chi \).
We are now ready to state our result on the various limit theorems which hold under some conditions on the parameters and on the observable~\( \varphi \). In order to state these conditions it is convenient to introduce the following variable: 
\begin{equation}\label{eq:def-B-phi}
\beta_\varphi \coloneqq
\begin{cases}
0 &\text{if  \( \varphi ( -1 ) = 0 \) and \( \varphi ( 1 )  = 0 \) }
\\
\beta_1 &\text{if  \( \varphi ( -1 ) \neq 0 \) and \( \varphi ( 1 )  = 0 \) }
\\
\beta_2 &\text{if  \( \varphi( -1  ) = 0 \) and \( \varphi ( 1 ) \neq 0 \) } 
 \\ 
\beta &
\text{if \( \varphi( -1 )\neq 0\) and \( \varphi ( 1 )\neq 0 \)},
\end{cases}
\end{equation} 
We can then state our results in all cases in a clear and compact way as follows.

\begin{manualtheorem}{E}\label{thm:limit-theorems}
  Let \( g \in \mathfrak{F} \) and \( \varphi : [ -1, 1 ] \to \mathbb{R} \) be  H\"older continuous  with \( \int \varphi d\mu = 0 \) and satisfying 
  \begin{description}
  \item[\namedlabel{itm:H1}{(\( \mathcal H\))}]
     \(
         \nu_1 >  (\beta_{1} - 1/2)/ k_{2} \quand  \nu_{2} > ( \beta_{2} - 1/2)/ k_{1 }, 
     \)
\end{description}
where \( \nu_{1}, \nu_{2} \) are the H\"older exponents of \( \varphi|_{[-1,0]}\) and \( \varphi|_{(0,1]}\) respectively. Then 
          \begin{enumerate}
            \item
                if   \( \beta_{\varphi} \in [0,1/2 ) \) then \( \varphi \) satisfies \ref{itm:CLT},
            \item
                if   \( \beta_{\varphi} = 1/2 \)  then \( \varphi \) satisfies \ref{itm:CLT-ns},
            \item
                if   \( \beta_{\varphi} \in (1/2,1 ) \) then
                \( \varphi \) satisfies  \ref{itm:SL-alpha}\textsubscript{\( 1/ \beta_\varphi \)}.
          \end{enumerate}
           In case 3 we can replace the H\"older continuity condition \ref{itm:H1} by the weaker (in this case) condition 
             \begin{description}
  \item[\namedlabel{itm:H2}{(\( \mathcal H'\))}]
     \(
        \nu_1 >  (\beta_{1} - \beta_{\varphi})/ k_{2 } \quand  \nu_{2} >  (\beta_{2} - \beta_{\varphi} )/ k_{1 }.
     \)
\end{description}
Moreover, in all cases where~\ref{itm:CLT} holds we have that \( \sigma^2 = 0 \) if and only if \( \varphi \) is a coboundary.
\end{manualtheorem}

\begin{rem}
Our results highlight the fundamental significance of the value of the observable \( \varphi\) at the two fixed points, and how  \emph{the fixed point at which \( \varphi \) is non-zero},  in some sense \emph{dominates}, and determines the kind of limit law which the observable satisfies. If  \( \varphi \) is non-zero at both fixed points, then it is the larger exponent which dominates. 
\end{rem}

\begin{rem}
Note that  \ref{itm:H1} and \ref{itm:H2} are automatically satisfied for various ranges of \( \beta_{1}, \beta_{2} \), for example if \( \beta \leq  1/2 \) then \ref{itm:H1} always holds and  if \(  \beta = \beta_{\varphi}  \) then \ref{itm:H2} always holds.
These H\"older continuity conditions arise as technical conditions in the proof and it is not clear to us if they are really necessary and what could be proved without them. It may be the case, for example, that some limit theorems still hold under weaker regularity conditions on \( \varphi\).
\end{rem}

\begin{rem}
    We remark also that the compact statement of Theorem \ref{thm:limit-theorems} somewhat ``conceals'' quite a large number of cases which express an intricate relationship between the map parameters and the values and regularity of the observable. For example, the case \( \beta_{\varphi}=0\) allows all possible values \( \beta_1, \beta_2\in [0,1)\) and the case \( \beta_{\varphi}=\beta_1\) allows all possible values of \( \beta_2\in [0,1)\). We therefore have a huge number of possible combinations which do not occur in the case of maps with just a single intermittent fixed point. 
\end{rem}

\subsection{Examples and Literature Review}
\label{sec:lit-review}

There is an extensive literature  on the dynamics and statistical properties of full branch maps, which have been studied systematically since the 1950s. Their importance stems  partly from the fact that they occur very naturally, for example any smooth non-invertible local diffeomorphism of \( \mathbb S^1 \) is a full branch map, but also, and perhaps most importantly, because many arguments in Smooth Ergodic Theory apply in this setting in a particularly clear and conceptually straightforward way.  Indeed, arguably,  most existing techniques used to study hyperbolic (including non-uniformly hyperbolic) dynamical systems are essentially (albeit often highly non-trivial) extensions and generalisations of methods first introduced and developed in the setting of one-dimensional full branch maps. 

Our class of maps \( \widehat{\mathfrak F}\) is quite general and includes many one-dimensional full branch maps which have been studied in the literature as well as many maps which have \emph{not} been previously  studied. We give below a brief survey of some of these examples and indicate for which choices of parameters these correspond to maps in our family\footnote{Recall that we have fixed the  domains of the branches of our maps as \( [-1,0)\) and \( (0, 1]\)  for convenience. In the examples below, when listing parameters, we slightly abuse notation and assume an affine change of coordinates which transforms the given domains into the ones used in our class.}.

Arguably one of the very first and simplest general class of maps for which the existence of an invariant ergodic and absolutely continuous probability measure was proved are \emph{uniformly expanding full branch} maps with derivatives uniformly bounded away from 0 and infinity, a result often referred to as the \emph{Folklore Theorem} and generally attributed to Renyi. 
Some particularly simple examples of uniformly expanding maps are piecewise affine maps such as those given by 
\begin{equation}\label{eq:pamap}
      g (x) = 
      \begin{cases} 
      ax &\text{for } x \in [0,1/a] \\ 
      \frac{a}{a-1} \left( x - \frac{1}{a} \right)  &\text{for } x \in (1/a, 1]
      \end{cases} 
\end{equation}
for  parameters \( a > 1 \), see Figure \ref{fig2a}. These are  easily seen to be  contained in the class \( \widehat{\mathfrak{F}} \)  with parameters \( (\ell_1,\ell_2,k_1,k_2,a_1,a_2,b_1,b_2) = ( 0, 0, 1, 1, a, {a}/({a-1}), a - 1, {a}/({a-1}) - 1) \).

  In the late '70s, physicists Manneville and Pomeau \parencite{PomMan80} introduced a simple but extremely interesting generalisation consisting of a class of full branch one-dimensional maps \( g: [0, 1] \to [0,1] \),
 which they called \emph{intermittency maps}, defined by 
 \begin{equation}\label{eq:pm}
    g (x) = x( 1 + x^{\alpha} )\mod 1 
 \end{equation}
   for  \( \alpha > 0 \), see Figure \ref{fig2b} (notice that for \( \alpha = 0 \) this just gives the map \( g(x) = 2x \) mod 1, which is just  \eqref{eq:pamap} with \( a =2\)).  These maps can be seen to be contained in our class \( \widehat{\mathfrak{F}} \) by taking the parameters \( (\ell_1,\ell_2,k_1,k_2,a_1,a_2,b_1,b_2) = ( \alpha, 0, 1, 1, a, a, 1, 1) \), where \( a = g'(x_0) \), and \( x_0 \in (0,1) \) is the boundary of the intervals on which the two branches of the map are defined. 
  The Manneville-Pomeau maps are interesting because  the uniform expansivity condition fails at a single fixed point on the boundary of the interval, where we have \( g'(0)=1\)  Their motivation was to model fluid flow where long period of stable flow is followed with an intermittent phase of turbulence, and they showed that this simple model indeed seemed to exhibit such dynamical behaviour.  It was then shown in \parencite{Pia80} that for \( \alpha>2\),  the intermittency maps \emph{failed} to have an invariant ergodic and absolutely continuous probability measure and satisfies the extremely remarkable property that \emph{the time averages of Lebesgue almost every point converge to the Dirac-delta measure \( \delta_0\) at the neutral fixed point}, even though these orbits are dense in \( [0,1]\) and the fixed point is topologically repelling.

   \begin{figure}[h] 
\centering
\begin{subfigure}{0.3\textwidth}
\includegraphics[width=\linewidth]{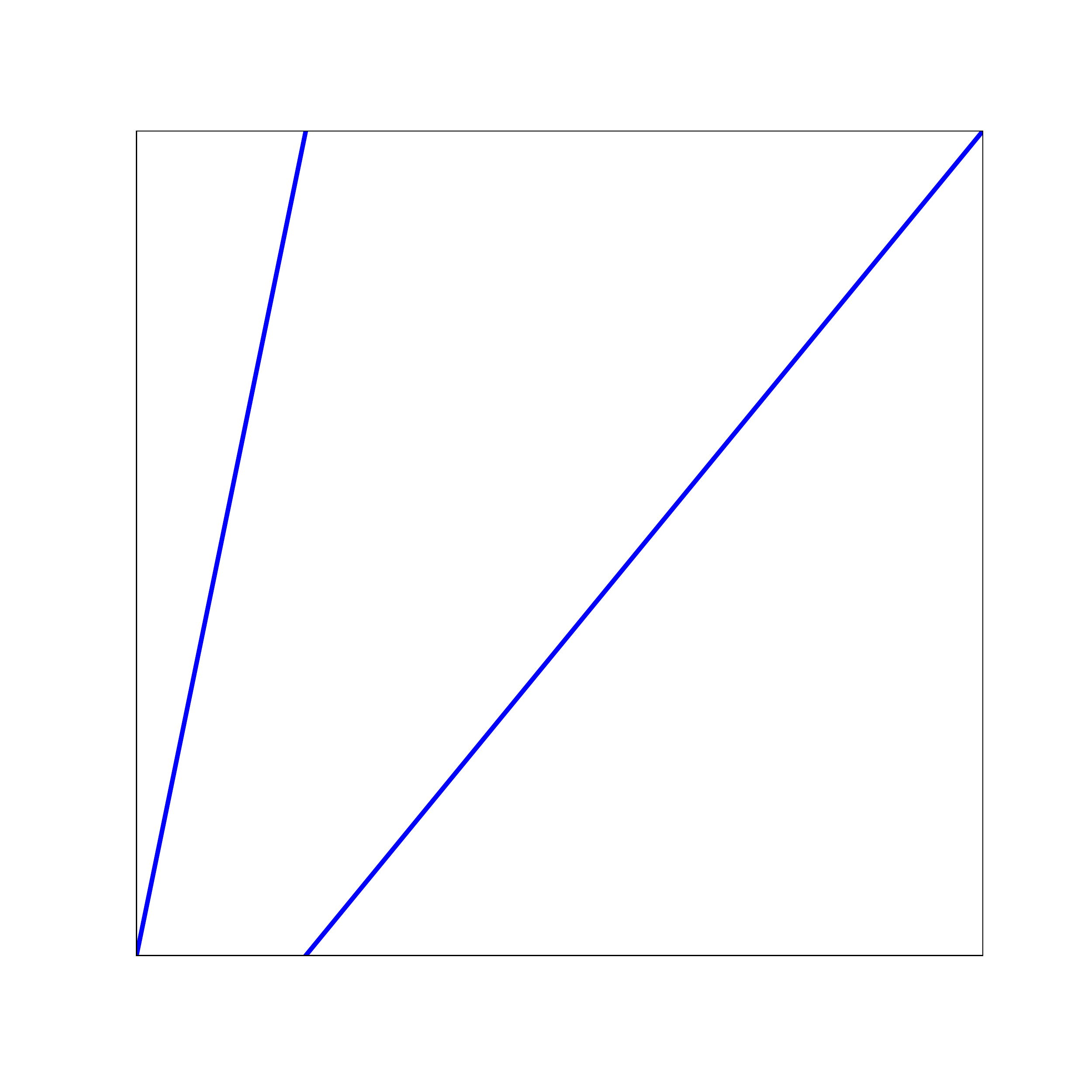}
\caption{Graph of \eqref{eq:pamap} with \( a = 5 \)}
\label{fig2a}
\end{subfigure}
\begin{subfigure}{0.3\textwidth}
\includegraphics[width=\linewidth]{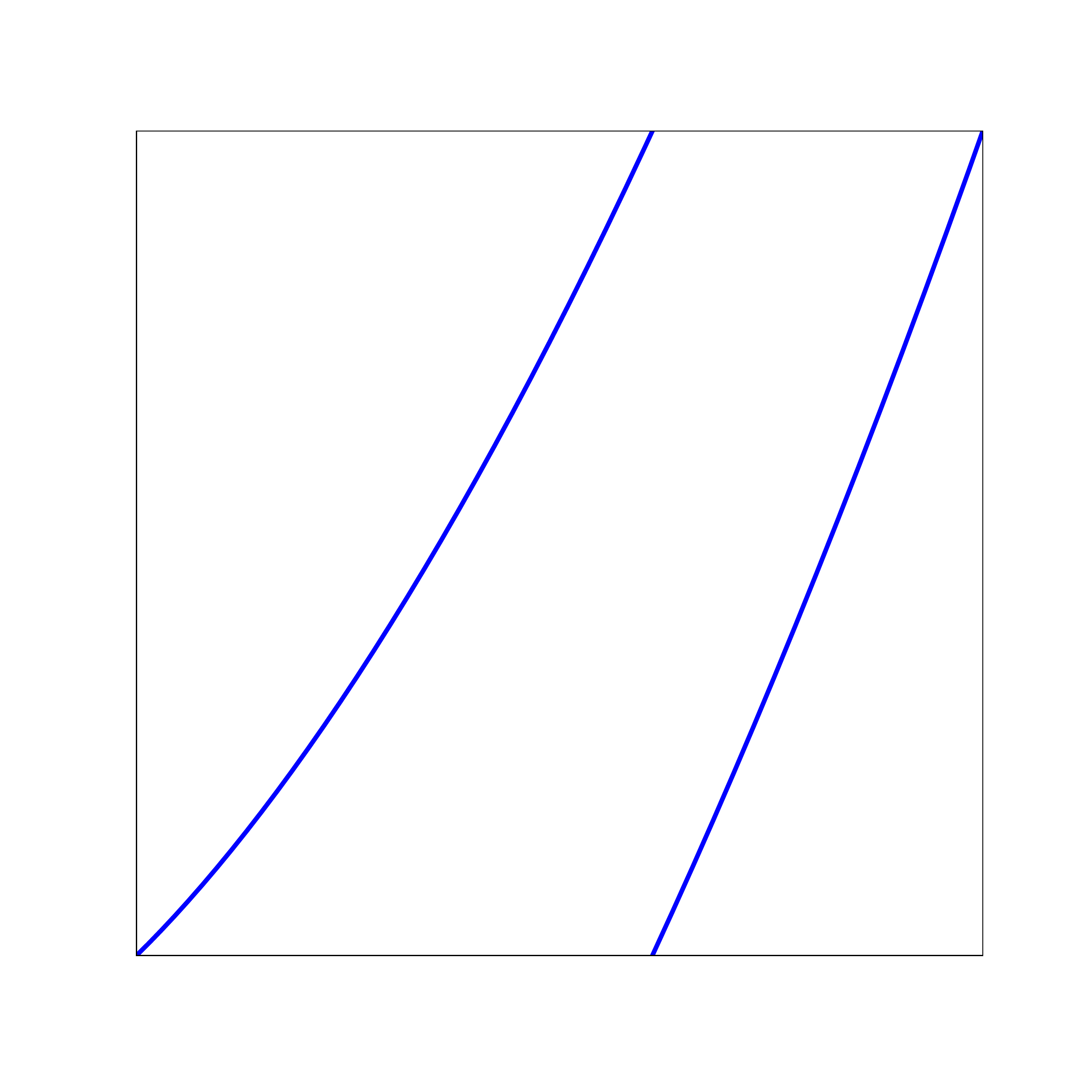}
\caption{Graph of \eqref{eq:pm} with \( \alpha = 9/10 \)}
\label{fig2b}
\end{subfigure}
\caption{Graphs of a piecewise affine, Manneville-Pomeau, and Liverani-Saussol-Vaienti maps.}
\end{figure}

Various variations of intermittency maps have been studied extensively from various points of views and with different techniques yielding quite deep results,  see e.g.  \cite{LivSauVai99,You99, Sar01, Mel09, PolSha09, FisLop01, Gou04,Gou04a, NicTorVai18, CoaHolTer21, FreFreTodVai16, Kor16,BahSau16,Ter16a,Shevan13, Zwe03}.  
One well known version is the so-called Liverani-Saussol-Vaienti (LSV) map \( g: [0, 1] \to [0,1] \) introduced in \cite{LivSauVai99} and defined by 
 \begin{equation}\label{eq:lsv}
  g (x) = \begin{cases} x (1 + 2^\alpha x^{\alpha}) & \text{for } x\in [0,1/2] \\ 2x - 1 &\text{for } x \in (1/2,1) \end{cases} 
  \end{equation}
  with parameter \( \alpha > 0 \), see Figure \ref{fig:lsv}. This maintains the essential features of the Maneville-Pomeau maps \eqref{eq:pm}, i.e. it is uniformly expanding except at the neutral fixed point at the origin, but in slightly simplified form where the two branches are always defined on the fixed domains \( [0,1/2]\) and \( 1/2, 1)\) and the second branch is affine, both of which make the map family easier to study, including the effect of varying the parameter.   The family of LSV maps \eqref{eq:lsv} can be seen to be contained in our class \(  \widehat{\mathfrak{F}}  \) by taking the parameters \( (\ell_1,\ell_2,k_1,k_2,a_1,a_2,b_1,b_2) = ( \alpha, 0, 1, 1, 2, 2, 2^{\alpha}, 1) \). 

In an earlier paper  \cite{Pik91}, Pikovsky  had introduced the maps \( g : \mathbb{S}^1 \to \mathbb{S}^1 \), defined (in a somewhat unwieldy way)  by the implicit equation
  \begin{equation}\label{eq:pik}
    x = 
    \begin{cases} 
      \frac{1}{2 \alpha} (1 + g(x))^{\alpha} & \text{for } x\in [0, 1/2\alpha] \\
      g(x) + \frac{1}{2 \alpha } ( 1 - g(x) )^{\alpha} &\text{for } x \in (1/2\alpha,1)
    \end{cases} 
  \end{equation}
for \( x\in [0,1)\), and then by the symmetry \( g(x) = g(-x) \) for \( x\in  (-1, 0]\), see Figure \ref{fig:vaienti}. These maps have a neutral fixed point at the left end point, like in \eqref{eq:pm} and \eqref{eq:lsv}  but with the added complication of having unbounded derivative at the boundary between the domains of the two branches. On the other hand the definition is specifically designed in such a way that the order of intermittency is the inverse of the order of the singularity and, together with the  symmetry of the two branches, this implies that Lebesgue measure is invariant for all values of the parameter \( \alpha > 0 \). Ergodic and statistical properties of these maps were studied in \cite{AlvAra04,CriHayMarVai10,BosMur14} and they can be seen to be contained  within our class \( \widehat{\mathfrak{F}} \) by taking the parameters \( (\ell_1,\ell_2,k_1,k_2,a_1,a_2,b_1,b_2) = ( \alpha - 1, \alpha - 1, 1/\alpha, 1/\alpha, (2\alpha)^{1/\alpha}, (2\alpha)^{1/\alpha}, 1/2\alpha, 1/2\alpha) \).

Finally,  \cite{Ino92,Cui21} consider a class of maps, see Figure \ref{fig:cui} for an example, with a single intermittent fixed point and multiple critical points with each critical point mapping to the fixed point. 
These include some maps which are more general than those we consider here as they are defined near the fixed and critical points  through some bounds rather than explicitly as we do here, but are also more restrictive as they only allow for a single neutral fixed point. Under a condition on the product of the orders of the neutral and (the most degenerate) critical point which is exactly analogous to our condition \( \beta<1 \), the existence of an invariant ergodic probability measure is proved which exhibits decay of correlations but no bounds are given for the rate of decay and no limit theorems are obtained.

\begin{figure}[h]
\centering
\begin{subfigure}{0.3\textwidth}
    \centering
    \includegraphics[width=\linewidth]{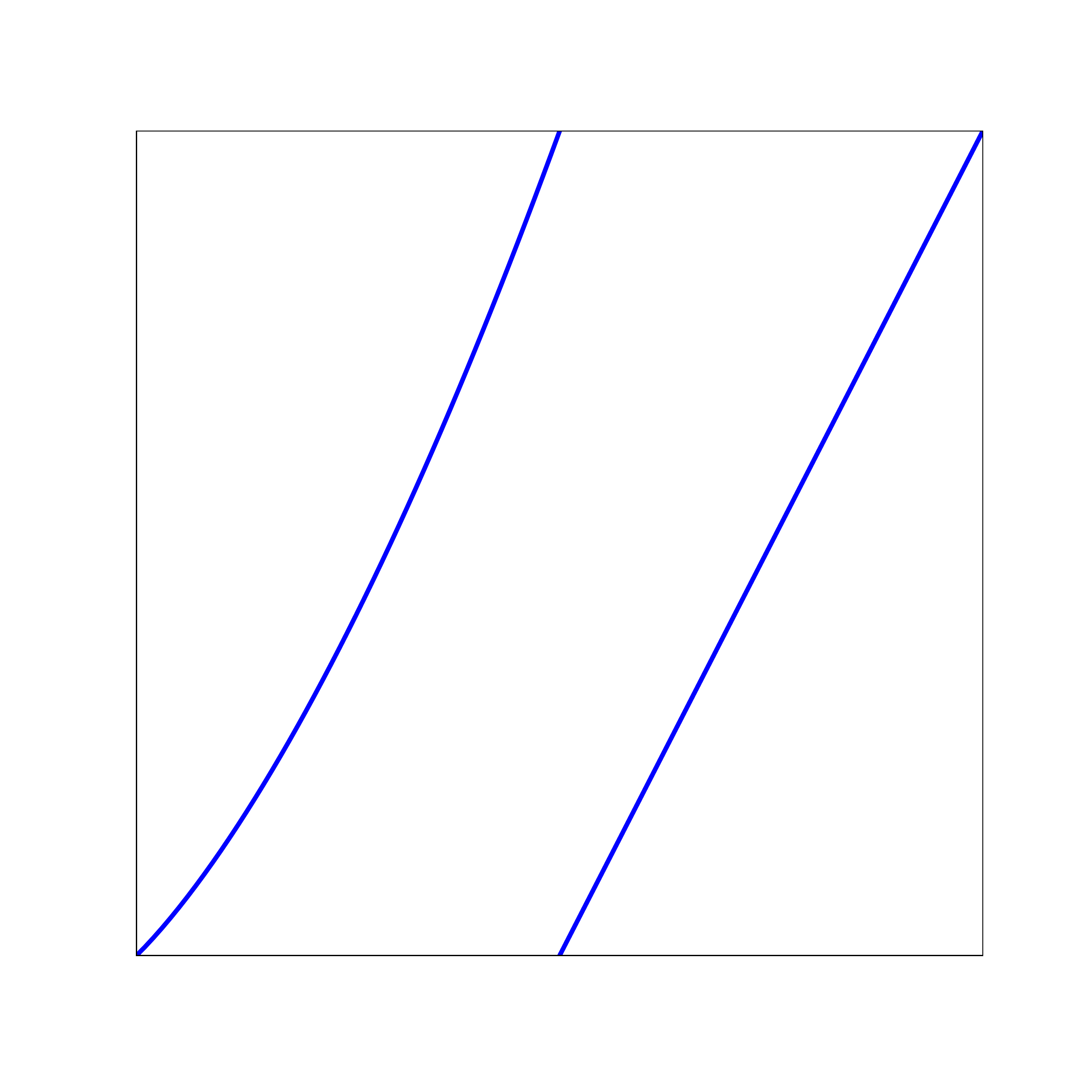}
    \caption{Graph of \eqref{eq:lsv} with \( \alpha = 9/10 \)}
    \label{fig:lsv}
\end{subfigure}
\begin{subfigure}{0.3\textwidth}
    \centering
    \includegraphics[width=\linewidth]{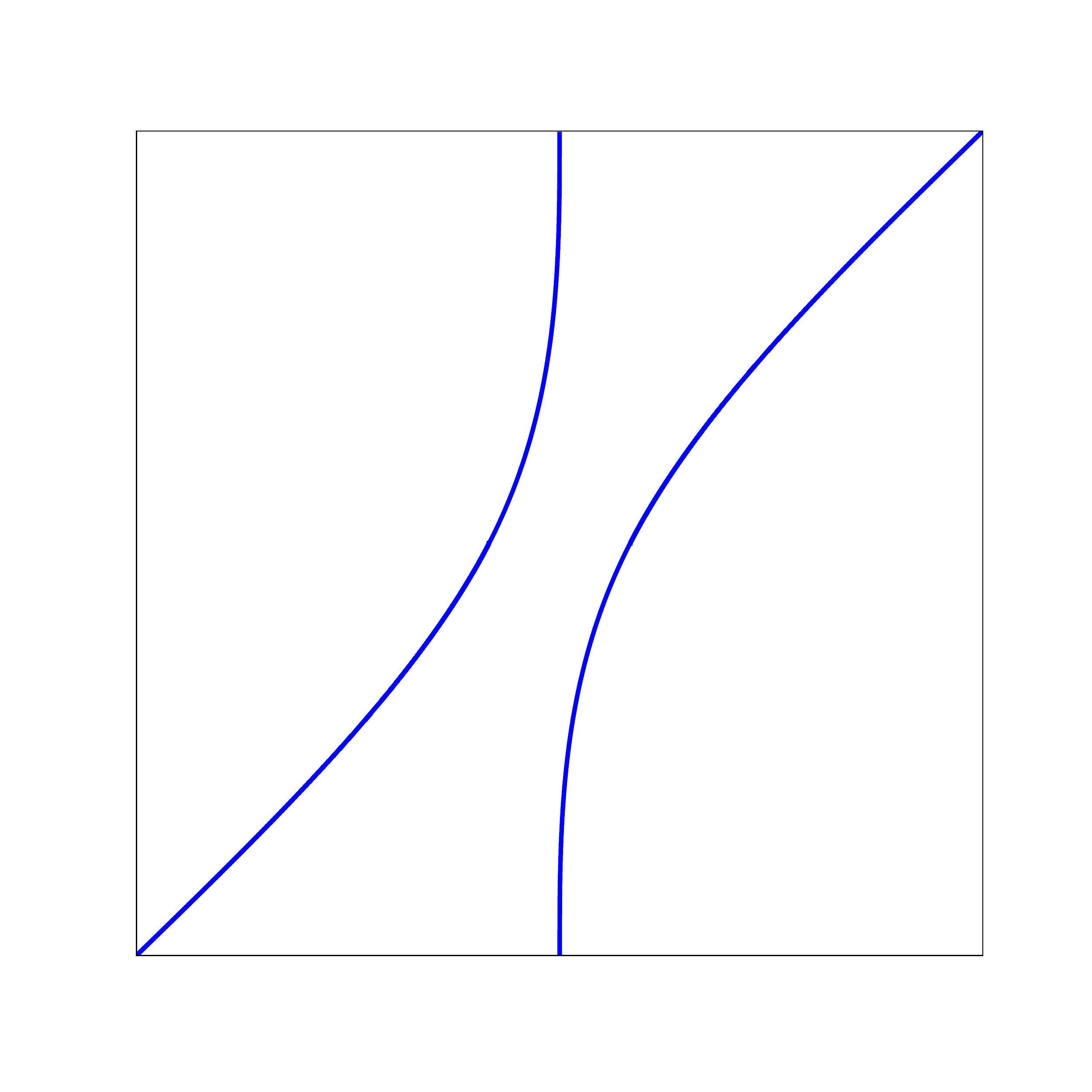}
    \caption{Graph of \eqref{eq:pik} with \( \alpha = 3 \)}
    \label{fig:vaienti}
\end{subfigure}
\begin{subfigure}{0.3\textwidth}
    \centering
    \includegraphics[width=\linewidth]{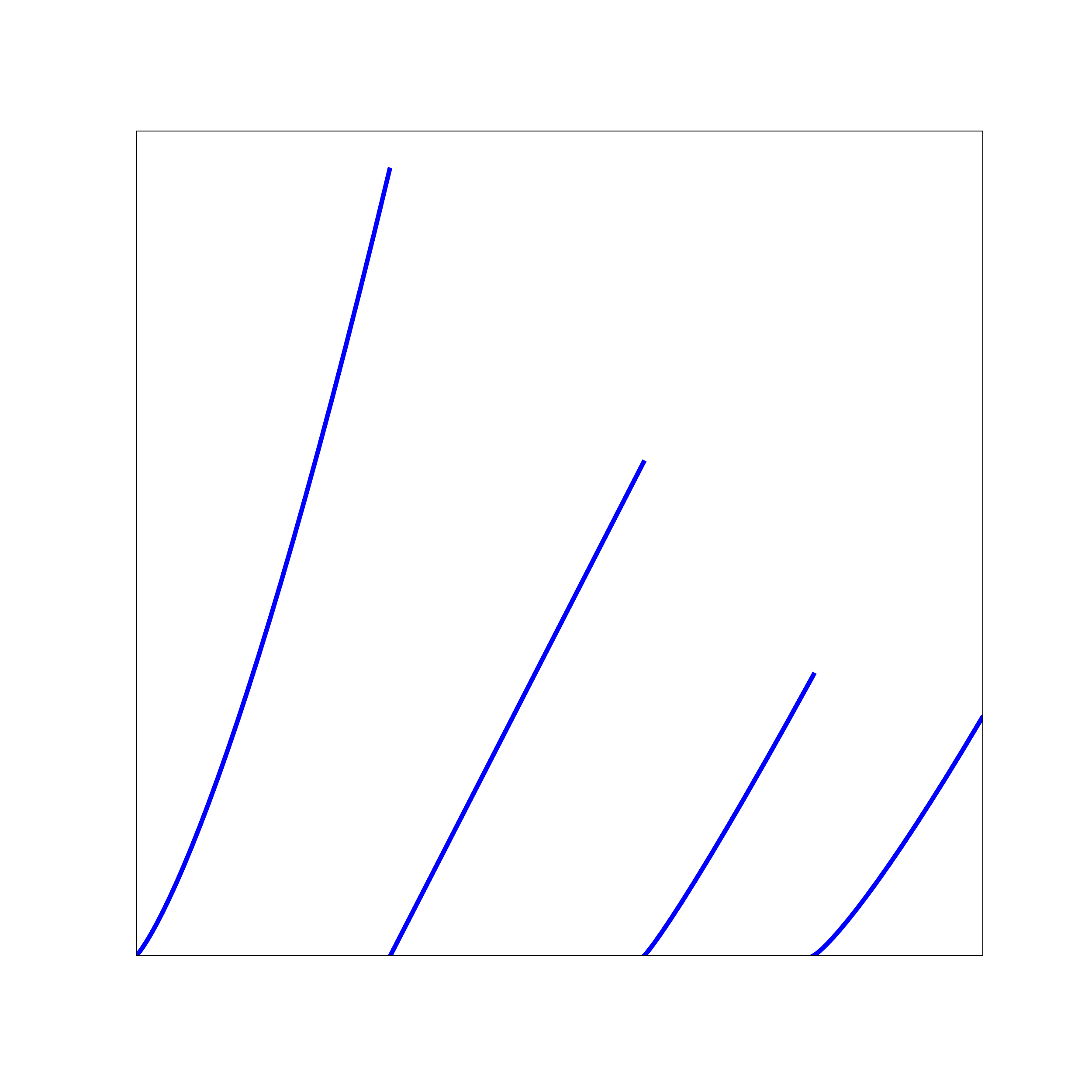}
    \caption{An example from \cite{Cui21}}
    \label{fig:cui}
\end{subfigure}
\label{fig:generalisations-of-pm}
\caption{Graphs of previsously studied generalisations of the Manneville-Pomeau map.}
\end{figure}

\begin{figure}[h]
\begin{minipage}{.4\textwidth}
\end{minipage}
\begin{minipage}{.6\textwidth}

\end{minipage}
\end{figure}

 \section{Overview of the proof}
\label{sec:outline}
We discuss here our overall strategy and prove our Theorems modulo some key technical Propositions which we then prove in the rest of the paper. 
Our argument can be naturally divided into three main steps which we describe in some detail in the following three subsections. 

\subsection{The induced map}
The first step of our arguments is the 
construction of an \emph{induced} full branch Gibbs-Markov map, also known as a \emph{Young Tower}. This is relatively standard for many  systems, including intermittent maps, however, the inducing domain which we are obliged to use here due to the presence of two indifferent fixed points is \emph{different from the usual inducing domains} and requires a more sophisticated \emph{double inducing} procedure, which we outline here and describe and carry out in detail in Section~\ref{sec:induced}.  Recall the definition of \( \Delta_0^- \) in~\eqref{eq:Delta0} and, for \( x\in \Delta_0^-\), let 
\[
\tau (x) \coloneqq \min \{ n > 0 : g^{n} (x) \in \Delta_0^{-}\} 
\] 
be the first return time to \(\Delta_0^-\).  Then we define the \emph{first-return induced map} 
\begin{equation}\label{eq:G}
G: \Delta_0^- \to \Delta_0^- \quad \text{ by } \quad G(x) \coloneqq g^{\tau(x)} (x).
\end{equation}
We say that a first return map (or, more generally, any induced map), \emph{saturates} the interval \( I \) if 
  \begin{equation}\label{eq:sat}
 \bigcup_{n\geq 0} \bigcup_{i= 0}^{n-1} g^{i}(\{\tau =  n \})
= \bigcup_{n\geq 0} g^{n}(\{\tau >   n \})
   = I \ (\text{mod} \ 0). 
  \end{equation}
  Intuitively, saturation means that the return map ``reaches'' every part of the original domain of the map \( g \), and thus the properties and characteristics of the return map reflect, to some extent, all the relevant characteristics of \( g \). 
  
  \begin{rem}\label{rem:disjoint}
   If \( G \) is a first return induced map, as in our case, then all sets of the form \( g^{i}(\{\tau =  n \}) \) are pairwise disjoint and therefore form a  partition of \( I \) mod 0.
  \end{rem}

The first main result of the paper is the following. 

\begin{prop}\label{thm:inducedmap}  Let \( g \in \widehat{\mathfrak{F}} \). Then  
  \( G: \Delta_0^- \to \Delta_0^-  \) is a first return induced  Gibbs-Markov map which saturates \( I \).
\end{prop}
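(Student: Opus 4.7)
The plan is to construct a natural partition of $\Delta_0^-$ indexed by the time the orbit spends in each of the two halves of the double induction, and then verify in turn the Markov, saturation, expansion, and distortion properties. For $x \in \Delta_0^-$, the first iterate $g(x)$ lies in some $\Delta_m^+$; after $m$ more iterates the orbit reaches $\Delta_0^+$, landing in some $\delta_n^+$; then $g^{m+2}(x)$ enters $\Delta_{n-1}^-$ and $n-1$ further iterates bring it back to $\Delta_0^-$. Hence $\tau(x) = m+n+1$ on the element
\[
P_{m,n} := \{x \in \Delta_0^- : g(x) \in \Delta_m^+,\ g^{m+1}(x) \in \delta_n^+\}, \qquad m \geq 0,\ n \geq 1.
\]
The monotone structure of $\{\Delta_j^\pm\}$ and $\{\delta_j^\pm\}$ established earlier guarantees each $P_{m,n}$ is an interval, and by \ref{itm:A0} the composition $G|_{P_{m,n}} = g^{m+n+1}|_{P_{m,n}}$ is a chain of full branch diffeomorphisms whose image is precisely $\Delta_0^-$. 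Saturation is then verified level by level: as $(m,n)$ vary, the level-$i$ images $g^i(P_{m,n})$ for $1\leq i \leq m+1$ cover $\bigcup_j \Delta_j^+ = I_+$; the level-$i$ images for $m+2 \leq i \leq m+n$ cover $\bigcup_{j \geq 1} \Delta_j^- = I_- \setminus \Delta_0^-$; and level $0$ is $\Delta_0^-$ itself, so the three pieces sum to $I$ mod $0$.

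For uniform expansion of $G$, I would factor $(g^{m+n+1})'(x)$ into four segments: the single step $g'$ on $\Delta_0^-$, the $m$ iterates of $g$ along the tower $\Delta_m^+ \to \Delta_0^+$, the crossing step $\delta_n^+ \to \Delta_{n-1}^-$, and the $n-1$ iterates along $\Delta_{n-1}^- \to \Delta_0^-$. Assumption \ref{itm:A2} directly supplies expansion on the segments outside the neighborhoods $U_{\pm 1}, U_{0\pm}$, and near the neutral fixed points the standard intermittency analysis gives $|(g^j)'|$ on $\Delta_j^\pm$ growing polynomially in $j$ (or exponentially when $\ell_j = 0$), because the map $g^j : \Delta_j^\pm \to \Delta_0^\pm$ carries a shrinking interval onto one of fixed size. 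Once bounded distortion is in hand this yields a pointwise lower bound $|G'| \approx |\Delta_0^-|/|P_{m,n}|$, which is uniformly greater than $1$ for $m+n$ large, while the finitely many small-$(m,n)$ cases are handled directly by \ref{itm:A2}.

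The main obstacle will be bounded distortion, since on a single excursion the orbit of $G$ passes near both a neutral fixed point, where $g''$ may blow up when $\ell_j\in(0,1)$ (cf.\ \eqref{eq:2ndder1}), and a critical or singular point at $0$, where $|g''|/|g'| \lesssim |x|^{-1}$ by \eqref{deq_55}. I would use the standard telescoping estimate, controlling $\log(G'(x)/G'(y))$ for $x,y \in P_{m,n}$ by the sum $\sum_{i=0}^{m+n} (|g''|/|g'|)(g^i x)\cdot |g^i x - g^i y|$, and bound each term by combining the geometric shrinking of iterate images inside the towers with the explicit forms in \eqref{eq:2ndder1} and \eqref{deq_55}. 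The delicate point is that the critical/singular region near $0$ is visited only once per excursion, so its lone potentially large contribution is absorbed into a constant, while the accumulated contributions along the $\Delta_j^\pm$ towers form summable polynomial (or geometric) series in $j$ thanks to the shrinkage of $|\Delta_j^\pm|$. These estimates form the technical core of Section~\ref{sec:induced} and are what occupy the bulk of the construction.
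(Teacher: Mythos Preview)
Your partition $P_{m,n}$ coincides (up to the index shift $P_{m,n}=\delta^-_{m+1,n}$) with the paper's $\{\delta^-_{i,j}\}$, and your treatments of the full-branch, saturation and distortion properties are essentially the paper's arguments.

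The expansion argument, however, has a genuine gap. Assumption \ref{itm:A2} does not give ``expansion on segments outside the neighbourhoods $U_{\pm 1},U_{0\pm}$'': it gives the \emph{combined} bound $(g^n)'>\lambda$ on $\delta^\pm_n$ for $1\le n\le n_\pm$, i.e.\ expansion of the one-sided maps $\widetilde G^\pm$ only on the finitely many branches indexed by $n\le n_\pm$. Your ``distortion-then-size'' route gives $|G'|\ge \mathfrak D^{-1}|\Delta_0^-|/|P_{m,n}|$, which exceeds $1$ only once $|P_{m,n}|$ drops below a threshold determined by the distortion constant $\mathfrak D$. There is no reason for this threshold to be reached already at $m+1=n_-+1$ or $n=n_++1$: in the critical-point case $k_1>1$ (resp.\ $k_2>1$), the first step of $\widetilde G^-$ on $\delta^-_{n_-+1}$ (resp.\ of $\widetilde G^+$ on $\delta^+_{n_++1}$) can have arbitrarily small derivative, and the finitely many subsequent iterates near the neutral fixed point need not yet compensate. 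So the branches with index just above $n_\pm$ fall between \ref{itm:A2} and the size argument, and your appeal to ``finitely many small-$(m,n)$ cases handled by \ref{itm:A2}'' does not cover them.

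The paper closes this gap with a monotonicity lemma. Defining $\phi\coloneqq(g|_{U_{0+}})^{-1}\circ g|_{U_{-1}}\circ g|_{U_{0+}}:\delta^+_{n+1}\to\delta^+_n$, one shows by an explicit computation using the forms in \ref{itm:A1} that $(g^2)'(x)>g'(\phi(x))$ for every $x\in\delta^+_{n+1}$ with $n\ge n_+$ (Lemma~\ref{lem:expansion}). This immediately gives $(\widetilde G^+)'(x)>(\widetilde G^+)'(\phi(x))$, so expansion propagates inductively from the base case $n=n_+$ (where \ref{itm:A2} applies) to all $n>n_+$. The same argument yields $(\widetilde G^-)'>\lambda$ on every $\delta^-_n$, and then $|G'|>\lambda^2$. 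This monotonicity step is the missing ingredient in your proposal.
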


We give the precise definition of  Gibbs-Markov map, and prove Proposition \ref{thm:inducedmap}, in Section~\ref{sec:induced}. In 
Section \ref{sec:top} we describe the topological structure of \( G \) and show that it a full branch map with countably many branches which saturates \( I \) (we will define \( G \) as a composition of two full branch maps, see \eqref{def:Gtilde} and \eqref{def:G-}, which is why we call the construction  a double inducing procedure); in Section~\ref{sec:est} we obtain key estimates concerning the sizes  of the partition elements of the corresponding partition; in Section \ref{sec:exp} we show that \( G \) is uniformly expanding; in Section~\ref{sec:dist} we show that \(G \)  has bounded distortion. From these results we get Proposition~\ref{thm:inducedmap} from which we can then obtain our first main Theorem.

\begin{proof}[Proof of Theorem \ref{thm:main1}]
By  standard results \( G \) admits a unique ergodic invariant probability measure \( \hat{\mu}_- \), supported on \( \Delta_0^- \),  which is equivalent to Lebesgue measure~\( m \) and which has   Lipschitz continuous  density 
\( 
    \hat h_-= d\hat\mu_-/dm
\)   bounded above and below. 
We then  ``spread'' the measure over the original interval \( I \) by defining the  measure 
\begin{equation}
  \label{eq:mu}
  \tilde\mu  \coloneqq \sum_{n=0}^{\infty} 
  g^n_*(\hat\mu_-|\{\tau \geq  n\})
\end{equation}
where \( 
 g^n_*(\hat\mu_-|\{\tau \geq  n\})(E):=   \hat{\mu}_- ( g^{-n} ( E ) \cap \{ \tau \geq  n \} ). 
\) Again by standard arguments,  we have that \( \tilde\mu \) is a sigma-finite measure which is ergodic and invariant for \( g \) and, using the non-singularity of \( g \), it is absolutely continuous with respect to Lebesgue. The fact that  \( G\) saturates \( I \) implies moreover that  \( \tilde \mu \) is equivalent to Lebesgue, which completes the proof. 
\end{proof}

\begin{rem}\label{rem:inducedmap}
We emphasize that we are not assuming \emph{any} symmetry  in the  two branches  of the map~\( g \). It is not important that the branches are defined on intervals of the same length and, depending on the choice of constants, we might even have a critical point in one branch and a singularity with unbounded derivative on the other. Interestingly, however, there is some symmetry in the construction in the sense that  for  \( x\in \Delta_0^+\), we can define the first return map 
\(
G_+: \Delta_0^+ \to \Delta_0^+ \) in a completely analogous way to the definition of \( G\) above (see discussion in Section \ref{sec:top}). Moreover, the conclusions of Proposition \ref{thm:inducedmap} hold for \( G_+\) and thus \( G_+\) admits a unique ergodic invariant probability measure \( \hat{\mu}_+ \) which is equivalent to Lebesgue measure~\( m \) and such that the density \(\hat h_+:= d\hat\mu_+/dm\)
is Lipschitz continuous and bounded above and below. 
The two maps \( G\) and \( G_+\) are clearly distinct, as are the measures \( \hat \mu_-\) and \( \hat \mu_+ \), but exhibit a  subtle kind of symmetry in the sense that the corresponding  measure \( \tilde \mu\) obtained by substituting \( \hat \mu_- \) by \( \hat \mu_+ \) in \eqref{eq:mu} is,  up to  a constant scaling factor, exactly the same measure.
\end{rem}

\begin{cor}\label{cor:density}
The density \( \tilde h \) of \( \tilde \mu|_{\Delta_0^- \cup \Delta_0^+} \) is Lipschitz continuous and bounded and \( \tilde \mu |_{\Delta_0^-} = \hat \mu\). 
\end{cor}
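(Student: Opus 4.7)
The plan has two parts: identifying \( \tilde\mu|_{\Delta_0^-} \) with \( \hat\mu_- \) directly from the first-return structure, and then transferring control of the density to \( \Delta_0^+ \) via the symmetric construction indicated in Remark~\ref{rem:inducedmap} together with the uniqueness supplied by Theorem~\ref{thm:main1}.

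First I would show \( \tilde\mu|_{\Delta_0^-} = \hat\mu_- \). For \( E \subseteq \Delta_0^- \), the definition~\eqref{eq:mu} gives
\[
\tilde\mu(E) = \sum_{n \geq 0} \hat\mu_-\!\left( g^{-n}(E) \cap \{ \tau \geq n \} \right).
\]
The key observation is that \( g(\Delta_0^-) \subseteq I_+ \) by~\eqref{eq:Delta0}, so whenever \( x \in \Delta_0^- \) and \( n \geq 1 \) satisfies \( g^n(x) \in \Delta_0^- \) with \( \tau(x) \geq n \), the first-return definition of \( \tau \) forces \( \tau(x) = n \). Consequently for each \( n \geq 1 \) the set \( g^{-n}(E) \cap \{ \tau \geq n \} \) reduces modulo zero to \( g^{-n}(E) \cap \{ \tau = n \} \), and summing these contributions over \( n \geq 1 \) yields \( \hat\mu_-(G^{-1}(E)) = \hat\mu_-(E) \) by \( G \)-invariance; together with the \( n = 0 \) term, and absorbing the resulting overall factor into the normalisation of \( \tilde\mu \), this identifies \( \tilde\mu|_{\Delta_0^-} \) with \( \hat\mu_- \).

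The density statement on \( \Delta_0^- \) is then immediate, since \( \hat h_- = d\hat\mu_-/dm \) is Lipschitz continuous and bounded by Proposition~\ref{thm:inducedmap}. For \( \Delta_0^+ \) I would invoke the symmetric construction from Remark~\ref{rem:inducedmap}: there is a first-return Gibbs--Markov map \( G_+: \Delta_0^+ \to \Delta_0^+ \) whose invariant probability measure \( \hat\mu_+ \) has Lipschitz continuous, bounded density \( \hat h_+ \). Spreading \( \hat\mu_+ \) by the analogue of~\eqref{eq:mu} produces another \( g \)-invariant, \( \sigma \)-finite, absolutely continuous measure on \( I \), which by the uniqueness part of Theorem~\ref{thm:main1} must coincide with \( \tilde\mu \) up to a positive scalar. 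Applying the first-step argument now to \( G_+ \) shows that this scaled measure restricts on \( \Delta_0^+ \) to a multiple of \( \hat\mu_+ \); hence \( \tilde\mu|_{\Delta_0^+} \) admits a Lipschitz continuous, bounded density.

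The main obstacle in this plan lies in the bookkeeping of the first step, in particular reconciling the \( \{\tau \geq n\} \) convention in~\eqref{eq:mu} with the first-return property so that no double-counting enters the sum; the symmetric argument on \( \Delta_0^+ \) is then essentially free given that the \( G_+ \) analogue of Proposition~\ref{thm:inducedmap} is in hand. I read the assertion that \( \tilde h \) is ``Lipschitz on \( \Delta_0^- \cup \Delta_0^+ \)'' componentwise, since the two one-sided limits of \( \tilde h \) at \( 0 \) need not agree; global boundedness on the union then follows at once from the separate bounds on the two pieces.
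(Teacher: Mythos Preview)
Your proposal is correct and follows the same approach as the paper: the first-return property gives \( \tilde\mu|_{\Delta_0^-} = \hat\mu_- \), and the symmetric \( G_+ \) construction together with uniqueness handles \( \Delta_0^+ \). The bookkeeping issue you flag is an artefact of a typo in~\eqref{eq:mu} (it should read \( \{\tau > n\} \), consistent with~\eqref{eq:sat} and the proof of Theorem~\ref{thm:main2}), under which only the \( n=0 \) term survives for \( E \subseteq \Delta_0^- \) and no renormalisation is needed.
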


\begin{proof}
Since \( G \) is a first return induced map it follows that the measure \( \tilde \mu \) defined in \eqref{eq:mu} satisfies \( \tilde \mu |_{\Delta_0^-} = \hat \mu\) and so the density \( \tilde h \) of \( \tilde \mu \) is Lipchitz continuous and bounded away from both \( 0 \) and infinity on \( \Delta_0^- \).
Moreover, as mentioned in Remark \ref{rem:inducedmap}, \( \tilde \mu | _{\Delta_0^+}\) is equal, up to a constant, to the measure \( \hat \mu_{+} \) and so the density of   \( \tilde \mu |_{\Delta_0^+} \) is also Lipschitz continuous and bounded away from 0 and infinity.
\end{proof}

\begin{rem}\label{rem:mu}
We have used above the notation \( G \) rather than \( G_{-}\) for simplicity as this is the map which plays a more central role in our construction, see Remark \ref{rem:G} below. Similarly, we will from now on simply use the notation \( \hat\mu\) to denote the measure 
\( \hat \mu_- \). 
\end{rem}

\subsection{Orbit distribution estimates}

The second step of the argument is aimed at establishing conditions under which the measure~\( \tilde \mu \) is finite, and can therefore be renormalized to a probability measure
\(
\mu:= \tilde \mu/ \tilde \mu(I), 
\)
and aimed at studying the ergodic and statistical properties of \( \mu \). Our approach here differs even more significantly from existing approaches in the literature, although it does have some similarities with the argument of \cite{CriHayMarVai10}: rather than starting with estimates of the tail of the inducing time (which would themselves anyway be significantly more involved than in the usual examples of intermittency maps with a single critical point due to our double inducing procedure), we carry out  \emph{more general estimates} on the \emph{distribution} of iterates of points in \( I_-\) and \( I_+\) before they return to \( \Delta_{0}^{-}\). More precisely,  we define the functions \( \tau^\pm(x): \Delta_0^-\to \mathbb N \)  by  

\begin{equation}\label{eq:taupm}
\tau^+(x):= \#\{1\leq i \leq \tau: g^i(x)\in I_+ \},  
\quand
\tau^-(x):= \#\{1\leq j \leq \tau: g^j(x)\in I_- \}.
\end{equation}
These functions  \emph{count  the number of iterates of \( x \) in \( I_-\) and \( I_+\) respectively before returning to~\( \Delta_0^-\)}.  
Then for any \( a,b\in \mathbb R \) we define \emph{weighted combination}  \(\tau_{a,b} : \Delta_-^0 \to \mathbb R \) by 
\begin{equation}\label{eq:def-of-tau}
  \tau_{a,b} (x) = a\tau^+(x) + b \tau^-(x) 
\end{equation}
As we shall see as part of our construction of the induced map, both of these functions are \emph{unbounded} and their level sets have a non-trivial  structure in \( \Delta_-^0 \) and, moreover, the \emph{inducing time} function \( \tau: \Delta^{-}_{0}\to \mathbb N \) of the induced map \( G_{-}\) corresponds exactly to \( \tau_{1,1}\) so that 
\begin{equation}\label{eq:tautau}
\tau(x) = \tau_{1,1}(x) = \tau^+(x) + \tau^-(x).  
\end{equation}

The key results of this part of the proof consists of explicit and  sharp asymptotic bounds for the distribution of \( \tau_{a,b} \) for different values of \( a,b\), from which we can then obtain as an immediate corollary the rates of decay of the inducing time function \( \tau\),  and which will also provide the core estimates for  the various distributional limit theorems.
To state our results, let
\begin{equation}\label{eq:B}
    B_1 := a_1^{-1/k_1} (\ell_2 b_2)^{-1/\beta_{2}}
    \quand
     B_2 := a_2^{-1/k_2} (\ell_1 b_1)^{-1/\beta_{1}},
\end{equation}
(the expressions defining the constants \( B_{1}, B_{2}\) will appear in the proof of Proposition \ref{prop:y_n-and-delta_n} below).

Recall from Corollary \ref{cor:density} that the density \( \tilde h \) of \( \tilde\mu\) is  bounded on \( {\Delta_0^- \cup \Delta_0^+}  \) and  let \( \tilde h(0^-)\) and \( \tilde h(0^+)\) denote the values  of this density on either side of \( 0 \) . Then,   for any  \( a,b \geq 0 \), we let 
\begin{equation}
  \label{eq:def-of-Ca-Cb}
  C_a \coloneqq  \tilde h(0^-)   B_1 a^{{1}/{\beta_2}}, \quand
  C_b \coloneqq  \tilde h(0^+)   B_2 b^{{1}/{\beta_1}}.
\end{equation}
Then we have the following distributional estimates.

\begin{prop}\label{prop:tail-of-tau}
  Let \( g \in \widehat{\mathfrak{F}} \). Then for every  \( a, b \geq 0 \) we have the following distribution estimates. 

  For every \( \gamma \in [0,1) \)
      \begin{align}
      \label{eq:a+b-tail}
       \tilde \mu( a\tau^+ + b\tau^- > t ) &= 
      \begin{cases}
        C_b t^{-1/\beta_1} + C_a t^{-1/\beta_2} 
     + o ( t^{ - \gamma  - 1 / \beta })
        &\text{if } \ell_1, \ell_2 > 0 \\
        C_b t^{-1/\beta_1} 
  + o( t^{ - \gamma -1/\beta_1} ) 
        &\text{if } \ell_1 >  0, \ell_2 = 0 \\
        C_a t^{-1/\beta_2} 
        + o( t^{-\gamma -1/\beta_2} ) 
        &\text{if } \ell_1 =  0, \ell_2 > 0 \\
        O \left( (1 + b_1)^{-t/k_2} + (1 + b_2)^{-t/k_1} \right)
        &\text{if } \ell_1 = 0 , \ell_2 = 0
      \end{cases} \\[5pt]
      \label{eq:a-b-pstve-tail}
      \tilde \mu ( a\tau^+ - b\tau^-  > t ) &= 
      \begin{cases}
        C_a t^{-1/\beta_2} 
        + o( t^{- \gamma - 1/\beta} ) 
        \hspace{2cm}&\text{if } \ell_2  > 0 \\
        O\left((1 + b_2)^{-t/k_1}\right),
        &\text{if } \ell_2 =  0
      \end{cases} \\[5pt]
      \label{eq:a-b-neg-tail}
      \tilde \mu ( a\tau^+ - b\tau^- < - t ) &=
      \begin{cases}
        C_b t^{-1/\beta_1} 
        + o( t^{-\gamma - 1/\beta} ) 
         \hspace{2.1cm} &\text{if } \ell_1  > 0 \\
       O\left((1 + b_1)^{-t/k_2}\right),
        &\text{if } \ell_1 =  0.
      \end{cases}
    \end{align}
\end{prop}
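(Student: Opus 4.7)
My plan is to reduce the tails of $a\tau^+ + b\tau^-$ and $a\tau^+ - b\tau^-$ to the one-sided tails $\tilde\mu(\tau^+ > n)$ and $\tilde\mu(\tau^- > m)$, compute each sharply from the fixed-point dynamics composed with the critical law, and combine them via inclusion-exclusion. The two crucial ingredients will be: (i) sharp length asymptotics for the endpoints $y_n^\pm$ of $\delta_n^\pm$ closer to $0$, obtained from the Pomeau--Manneville recursion at $\pm 1$ composed with the $k_i$-th root behaviour of $g$ at $0^\mp$; and (ii) the $g$-invariance identity
\[
  \sum_{n\geq 1}\hat\mu(\delta_n^- \cap g^{-n}(\delta_m^+)) = \tilde\mu(\delta_m^+),
\]
which follows from the fact that for $x \in \delta_n^- \cap g^{-n}(\delta_m^+)$ the $g$-orbit hits $\Delta_0^+$ exactly once before returning to $\Delta_0^-$, namely landing in $\delta_m^+$ at time $n$. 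This identity is precisely what allows the factor $\tilde h(0^+)$ (rather than $\tilde h(0^-)$) to appear in the coefficient $C_b$.

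For the individual tails, assume $\ell_2 > 0$ and set $u_n = 1 - \sup\Delta_n^+$. The recursion $u_{n-1} = u_n + b_2 u_n^{1+\ell_2}$ on $U_{+1}$ yields the refined asymptotic $u_n = (b_2 \ell_2 n)^{-1/\ell_2}(1 + o(n^{-\gamma}))$ for any $\gamma < 1$; inverting the critical law $g|_{U_{0-}}(x) = 1 - a_1 |x|^{k_1}$ through $g(y_n^-) = 1 - u_{n-1}$ gives $|y_n^-| = B_1 n^{-1/\beta_2}(1 + o(n^{-\gamma}))$. Since $\{\tau^+ > n\} = (-|y_n^-|, 0) \cap \Delta_0^-$ and $\tilde h$ is Lipschitz at $0^-$ by Corollary~\ref{cor:density},
\[
  \tilde\mu(\tau^+ > n) = \tilde h(0^-) B_1 n^{-1/\beta_2}(1 + o(n^{-\gamma})).
\]
An analogous computation at the $-1$ fixed point produces $|y_n^+| = B_2 n^{-1/\beta_1}(1 + o(n^{-\gamma}))$ when $\ell_1 > 0$; combined with the invariance identity, $\tilde\mu(\tau^- > m) = \tilde\mu(\bigcup_{k>m}\delta_k^+) = \tilde h(0^+) B_2 m^{-1/\beta_1}(1 + o(m^{-\gamma}))$. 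When $\ell_i = 0$ the corresponding fixed point is hyperbolic with $u_n \asymp (1 + b_i)^{-n}$, which propagates through the same inversion to the geometric tails stated.

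To establish \eqref{eq:a+b-tail}, decompose
\[
  \tilde\mu(a\tau^+ + b\tau^- > t) = \tilde\mu(\tau^+ > t/a) + \tilde\mu(\tau^- > t/b) - \tilde\mu(\tau^+ > t/a,\, \tau^- > t/b) + \tilde\mu(M_t),
\]
with $M_t := \{a\tau^+ \leq t,\, b\tau^- \leq t,\, a\tau^+ + b\tau^- > t\}$. The first two terms give the leading $C_a t^{-1/\beta_2} + C_b t^{-1/\beta_1}$. Using the joint upper bound $\hat\mu(\delta_n^- \cap g^{-n}(\delta_m^+)) \lesssim |\delta_n^-| |\delta_m^+|$ from bounded distortion of $G$ (Proposition~\ref{thm:inducedmap}), the overlap is $O(t^{-1/\beta_1 - 1/\beta_2})$ and a direct estimate of the planar sum over $M_t$ (splitting by $n$ bounded, $m$ bounded, and both large) gives $\tilde\mu(M_t) = O(t^{-1 - 1/\beta_1} + t^{-1 - 1/\beta_2} + t^{-1/\beta_1 - 1/\beta_2})$, each of which is $o(t^{-\gamma - 1/\beta})$ for any $\gamma < 1$. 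The signed tails \eqref{eq:a-b-pstve-tail}--\eqref{eq:a-b-neg-tail} follow by the same analysis: $a\tau^+ - b\tau^- > t$ forces $\tau^+ > t/a$, so only the $C_a t^{-1/\beta_2}$ term survives while the additional $\tau^-$ constraint excludes only an $o$-order subset, and symmetrically for \eqref{eq:a-b-neg-tail}. The chief technical obstacle will be maintaining the $o(n^{-\gamma})$ precision uniformly throughout: the refined next-order expansion for $u_n$, its propagation through the $k_i$-th root inversion, the Lipschitz control of $\tilde h$ near $0^\pm$, and the planar sum estimate for $M_t$ all need to cohere, and the treatment must be uniform in the cases $\ell_i = 0$ where polynomial decay is replaced by geometric.
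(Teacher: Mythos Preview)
Your proposal is correct and follows essentially the same route as the paper: the same inclusion--exclusion decomposition into the one-sided tails plus the overlap/middle region; the same identification $\{\tau^+>n\}=(y_n^-,0)$; the same invariance identity $\sum_i\tilde\mu(\delta_{i,j})=\tilde\mu(\delta_j^+)$ (which the paper proves by iterating $\tilde\mu=\tilde\mu\circ g^{-1}$ rather than by your hitting-time heuristic, but the content is identical); the same use of the Lipschitz density at $0^\pm$ to extract the constants $C_a,C_b$; and the same bounded-distortion product bound $\tilde\mu(\delta_{i,j})\lesssim|\delta_i^-||\delta_j^+|$ for the cross terms. Two minor differences worth noting: you correctly flag the need for the refined expansion $u_n=(b_2\ell_2 n)^{-1/\ell_2}(1+o(n^{-\gamma}))$ to secure the $o(t^{-\gamma-1/\beta})$ error for $\gamma>0$, a point the paper glosses over (its Proposition~3.2 only records the leading $\sim$); and your three-region split for $M_t$ yields the bound $O(t^{-1-1/\beta_1}+t^{-1-1/\beta_2}+t^{-1/\beta_1-1/\beta_2})$, which is cleaner and slightly sharper than the paper's binomial-expansion route giving $O(t^{-1-1/\beta}\log t)$, though both suffice.
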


\begin{rem}\label{rem:dist-tau-a-b}
We have assumed in Proposition \ref{prop:tail-of-tau} that \( a,b \geq 0 \) to avoid stating explicitly too many cases, but one can easily read off the tails for \( \tau_{a,b} \) for arbitrary \( a,b \in \mathbb{R} \).  For example, if \( a<0\) and \( b>0\) we can write  \( \tilde \mu ( -a\tau^+ + b \tau^- > t ) = \tilde\mu ( a\tau^+ - b \tau^- < -t ) \) and get the corresponding estimate from \eqref{eq:a-b-neg-tail}. Notice moreover, that the estimates for \( \ell_{1}=0\) and/or \( \ell_{2}=0\) are exponential.
\end{rem}


Recall from Corollary \ref{cor:density} that \( \hat\mu = \tilde\mu \) on the inducing domain \( \Delta_{0^{-}}\) and therefore all the above estimates hold for \( \hat\mu \) with exactly the same constants. In particular by  Proposition \ref{prop:tail-of-tau} and   \eqref{eq:tautau}, we immediately get the corresponding estimates for  the tail  \( \hat \mu ( \tau > t ) = \tilde \mu ( \tau > t ) \). 

\begin{cor}
  \label{cor:tail-of-return-time}
 If \( \beta = 0 \) then \( \hat\mu ( \tau > t ) \) decay exponentially as \( t \to +\infty \). 
  If \( \beta > 0 \) then there exists a positive constant \( C_{\tau} \) (which can be computed explicitly) such that
  \[
     \hat \mu ( \tau > t ) \sim C_{\tau} t^{-1/\beta}.
  \]
\end{cor}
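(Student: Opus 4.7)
The plan is to derive this corollary directly from Proposition~\ref{prop:tail-of-tau}, since essentially all the work is already done there. First I would invoke the identity $\tau = \tau^{+} + \tau^{-} = \tau_{1,1}$ recorded in~\eqref{eq:tautau}, which expresses the inducing time as the weighted combination appearing in the proposition with $a = b = 1$. Since Corollary~\ref{cor:density} gives $\tilde\mu|_{\Delta_0^-} = \hat\mu$ and the level sets $\{\tau > t\}$ sit inside $\Delta_0^-$, the tail $\hat\mu(\tau > t)$ equals $\tilde\mu(\tau > t)$, so I can read the asymptotics off~\eqref{eq:a+b-tail} verbatim.

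Next I would split into cases according to which of $\ell_1, \ell_2$ vanish. Because $k_1, k_2 > 0$ by~\ref{itm:A1}, the definitions $\beta_1 = k_2\ell_1$, $\beta_2 = k_1\ell_2$ force $\beta = 0$ if and only if $\ell_1 = \ell_2 = 0$. In this regime the fourth clause of~\eqref{eq:a+b-tail} applies and yields the desired exponential bound $O((1+b_1)^{-t/k_2} + (1+b_2)^{-t/k_1})$. Conversely, if $\beta > 0$, then at least one $\ell_i$ is positive and one of the first three clauses of~\eqref{eq:a+b-tail} applies: I would take $C_\tau = C_b$ when $\beta = \beta_1 > \beta_2$ (so the $t^{-1/\beta_1}$ term dominates the $t^{-1/\beta_2}$ term), $C_\tau = C_a$ when $\beta = \beta_2 > \beta_1$, and $C_\tau = C_a + C_b$ when $\beta_1 = \beta_2$, in each case evaluated at $a = b = 1$ using~\eqref{eq:def-of-Ca-Cb}. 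The cases where exactly one $\ell_i$ vanishes are already written with a single leading term and require no combination.

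Finally, I would verify that the $o$-term in~\eqref{eq:a+b-tail} does not spoil the $\sim$ asymptotic. Since Proposition~\ref{prop:tail-of-tau} allows any $\gamma \in [0,1)$, even $\gamma = 0$ gives an error $o(t^{-1/\beta})$, which when divided by the leading $C_\tau t^{-1/\beta}$ tends to $0$; the subleading term $t^{-1/\beta_i}$ with $\beta_i < \beta$ (in the case where both $\ell_j > 0$ and $\beta_1 \neq \beta_2$) is likewise of strictly smaller order and is absorbed. There is really no genuine obstacle in this argument: the whole content of the corollary is a straightforward bookkeeping specialization of the proposition, and the main technical work has been quarantined into Proposition~\ref{prop:tail-of-tau} which is proved separately.
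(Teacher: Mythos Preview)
Your proposal is correct and follows essentially the same approach as the paper: the paper states the corollary as an immediate consequence of Proposition~\ref{prop:tail-of-tau} together with~\eqref{eq:tautau} and the identification $\hat\mu = \tilde\mu|_{\Delta_0^-}$ from Corollary~\ref{cor:density}, and you have simply spelled out the case analysis and the bookkeeping for $C_\tau$ that the paper leaves implicit. Your verification that the $o(t^{-1/\beta})$ error (with $\gamma=0$) and any subleading power are absorbed into the $\sim$ asymptotic is exactly the content the paper omits when it says the estimates follow ``immediately''.
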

Proposition \ref{prop:tail-of-tau} will be proved in Section \ref{sec:tail}, here we show how it implies Theorems \ref{thm:main2}, \ref{thm:mainentropy}, \ref{thm:maincorrelations}.

\begin{proof}[Proof of Theorems \ref{thm:main2}, \ref{thm:mainentropy}, and \ref{thm:maincorrelations}]
From the definition of  \( \tilde \mu\) in  \eqref{eq:mu} and since  \( g^{-n}(I)=I \) we have 
\[
   \tilde\mu(I)     \coloneqq \sum_{n=0}^{\infty} \hat\mu_-(g^{-n}(I)\cap\{\tau > n\})
   =\sum_{n=0}^{\infty} \hat\mu_-(I\cap\{\tau > n\})
                    =\sum_{n=0}^{\infty} \hat\mu_-(\tau > n).
   \]
   By Corollary \ref{cor:tail-of-return-time}, if \( \beta=0\), the quantities \(  \hat\mu_-(\tau > n) \) decay exponentially and, if \( \beta >0 \) we have 
   \[
   \tilde\mu(I)  
                    = C\sum_{n=1}^{\infty}n^{-\frac{1}{\beta} } ( 1 + o(1) ),
   \]
   for some \( C > 0 \).
This implies that \(    \tilde\mu(I)  < \infty \) if and only if \( \beta \in [0,1) \), i.e. if and only if \( g \in \mathfrak F\). Thus, for  \( g \in \mathfrak F\) we can define the measure \( \mu_{g}:=\tilde \mu/\tilde \mu(I)\), which is an invariant ergodic probability measure for \( g \),  and  is unique because it is equivalent to Lebesgue, thus proving Theorem  \ref{thm:main2}. 
Theorem \ref{thm:mainentropy} follows from Theorem A in \cite{AlvMes21} by noticing that  \(\mathcal{P}=\{(-1,0),(0,1)\}\) is a Lebesgue mod 0 generating partition  such that  \(H_{\mu_g}(\mathcal{P})<\infty\) and \(h_{\mu_g}(g,\mathcal{P})<\infty\), and therefore  \(h_{\mu_g}(g)<\infty\). 
Finally, Theorem~\ref{thm:maincorrelations} follows by well known results  \cite{You99} which show that the decay rate of the tail of the inducing times provides upper bounds for the rates of decay of correlations as stated. 
\end{proof}

  
\subsection{Distribution of induced observables}
\label{sec:distind}

The last part of our argument is focused on obtaining the limit theorems stated in Theorem \ref{thm:limit-theorems}.  When  \( \beta=0 \) the decay of correlations is exponential and the result follows from \cite{You99}. Similarly, after having established Proposition \ref{thm:inducedmap} and Corollary \ref{cor:tail-of-return-time}, the case that only one of \( \ell_{1} , \ell_{2} \) is positive implies that there is only one  intermittent fixed point, and thus essentially reduces to the argument given in \cite[Theorem 1.3]{Gou04} for the LSV map. We only therefore  need to consider the case that both \( \ell_{1}, \ell_{2} > 0 \), which implies in particular that \( \beta\in (0,1)\). 

Given an observable $ \varphi : [0,1] \to \mathbb{R}$, we define  the  \emph{induced observable} 
\(
 \Phi : \Delta_0^- \to \mathbb{R} 
 \) 
by 
 \[
  \Phi(x) \coloneqq \sum_{ k = 0 }^{ \tau (x) - 1 } \varphi \circ g^k. 
  \] 
\begin{defn}\label{def:def-of-D-alpha}
We write   \( \Phi \in \mathcal{D}_{\alpha} \) if
$
    \exists c_{1}, c_{2} \geq 0 $,  with \emph{at least one} of \( c_1,c_2 \) non-zero, such that 
 \begin{equation}\label{eq:def-D-alpha}
     \hat{ \mu } ( \Phi > t ) = c_{1}t^{-\alpha} + o( t^{-\alpha} ) 
     \quand
    \hat{ \mu } ( \Phi < -t ) = c_{2}t^{-\alpha} + o( t^{-\alpha} ), 
 \end{equation}
\end{defn}

In certain settings, limit theorems can be deduced from properties of the induced observable~\( \Phi \). In particular,  it is proved in Theorems 1.1 and 1.2 of   \cite{Gou04} that, precisely in our setting \footnote{The assumptions of \cite[Theorems 1.1 and 1.2]{Gou04}
are that \( \varphi \)  is H\"older continuous and \( G \) is an 
 induced Gibbs-Markov map    with invariant absolutely continuous probability measure \( \hat \mu \) and return time satisfying  \( \hat\mu ( \tau > n ) = O ( n^{-\gamma} ) \) for some \( \gamma>1\), which holds in our case by Corollary \ref{cor:tail-of-return-time} and the fact that \( \beta \in (0,1)\).}: 
\begin{align}
  \label{eq:cond-clt}
  & \text{ if } \Phi \in L^2 ( \hat{\mu})   \text{ then \( \varphi \) satisfies } \ref{itm:CLT}, \\
  \label{eq:cond-clt-ns}
  & \text{ if } \Phi \in \mathcal{D}_{2} \text{ then \( \varphi \) satisfies }\ref{itm:CLT-ns}, \\
  \label{eq:cond-SL}
  & \text{ if } \Phi \in \mathcal{D}_{\alpha} \text{ with } \alpha \in (1, 2) \text{ then \( \varphi \) satisfies } \ref{itm:SL-alpha}_{\alpha}.
\end{align}
We will argue that in each case of Theorem \ref{thm:limit-theorems}, the induced observable \( \Phi \) satisfies one of the above. 
To prove this, we first  decompose a general  observable \( \varphi: [-1, 1]\to \mathbb R\) by 
letting  \( a := \varphi(1) \) and \( b := \varphi(-1) \) and writing 
\begin{equation}\label{eq:varphidef}
\varphi = \varphi_{a,b} + \tilde \varphi
\quad \text{ where } \quad
  \varphi_{a,b} \coloneqq b  \chi_{[-1,0)} + a \chi_{[0,1]}
\  \text{and}\  
  \tilde \varphi \coloneqq \varphi - \varphi_{a,b},
\end{equation}
where  \( \chi_{[-1,0)}, \chi_{[0,1]}\) are the characteristic functions of the intervals \( [-1,0) \) and \( (0,1]\) respectively. 
The induced observable   of \( \varphi \) is  the sum of the induced observables of \( \varphi_{a,b}\) and \(  \tilde \varphi  \) giving
\begin{equation}\label{eq:dec}
  \Phi (x)
  = \sum_{ k = 0 }^{ \tau (x) - 1 }  \varphi_{a,b} \circ g^{k} (x) +\sum_{ k = 0 }^{ \tau(x) - 1 } \tilde{ \varphi } \circ g^{k} (x) = \tau_{a,b} + \tilde \Phi, 
\end{equation}
where \(   \tilde{\Phi}  \) denote the induced observable of \( \tilde \varphi\), and \( \tau_{a,b}  \) is defined in \eqref{eq:def-of-tau}, indeed,  \( \varphi_{a,b}\circ g^k(x) \) takes only two possible values, \( a\) or \( b \),  depending on whether \( g^k(x)\in (0,1]\) or \( g^k(x)\in [-1,0)\), and therefore the corresponding induced observable is precisely \( \tau_{a,b} \). 

To prove  Theorem \ref{thm:limit-theorems} we obtain regularity and distribution results for the induced observables  \(  \tau_{a,b}\) and \(  \tilde \Phi  \) and substitute  them into   \eqref{eq:dec} to get the various cases  \eqref{eq:cond-clt}-\eqref{eq:cond-SL}. 
The motivation for the decomposition \eqref{eq:varphidef} is given by the observation that 
 \( \tilde \varphi (-1) = \tilde \varphi (1) = 0 \), which allows us to prove the following estimate for the corresponding induced observable \( \tilde \Phi\).

\begin{prop}\label{prop:bounds-on-tilde-Phi}
  Let \( g \in \mathfrak{F} \)  with  \( \beta \in (0,1) \) and let \( \tilde\varphi: [-1,1]\to \mathbb R\) be a H\"older continuous observable such that \( \tilde\varphi (-1)=\tilde\varphi (1) = 0\).   Then 
 \begin{equation}\label{eq:bounds1}
\text{  \ref{itm:H1}} \implies \tilde \Phi \in L^2 
\quand 
 \text{\ref{itm:H2}}  \implies \hat \mu( \pm \tilde \Phi > t ) = o( t^{ -1/\beta_\varphi }). 
 \end{equation}
\end{prop}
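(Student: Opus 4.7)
The plan is to bound the induced observable $\tilde\Phi$ pointwise by a function of $\tau^+$ and $\tau^-$ whose distribution is controlled by Proposition \ref{prop:tail-of-tau}. The structural ingredient is that, for every $x \in \Delta_0^-$, the orbit $x, g(x), \ldots, g^{\tau(x)-1}(x)$ consists of a single monotone excursion through $I_+$ of length $\tau^+(x)$ followed by a single monotone excursion through $I_-$ of length $\tau^-(x)$: along the first, $g^k(x) \in \Delta_{\tau^+(x)-k}^+$ for $1 \le k \le \tau^+(x)$, and along the second an analogous identity holds for $\Delta_j^-$. The intermittency asymptotics, to be established in Section \ref{sec:est}, then give $1 - y \lesssim j^{-1/\ell_2}$ for $y \in \Delta_j^+$ with $j \ge 1$, and symmetrically $1 + y \lesssim j^{-1/\ell_1}$ for $y \in \Delta_j^-$.

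Because $\tilde\varphi(\pm 1) = 0$ and $\tilde\varphi$ is H\"older continuous, we have $|\tilde\varphi(y)| \lesssim (1-y)^{\nu_2}$ on $I_+$ and $|\tilde\varphi(y)| \lesssim (1+y)^{\nu_1}$ on $I_-$. Combining these with the distance estimates and summing along the two excursions yields
\[
  |\tilde\Phi(x)| \;\lesssim\; S_2(\tau^+(x)) + S_1(\tau^-(x)) + O(1),
  \qquad
  S_i(n) := \sum_{j=1}^n j^{-\nu_i/\ell_i},
\]
where $S_i(n)$ is uniformly bounded if $\nu_i > \ell_i$, grows like $\log n$ if $\nu_i = \ell_i$, and behaves like $n^{1-\nu_i/\ell_i}$ if $\nu_i < \ell_i$; the $O(1)$ term absorbs the finitely many iterates lying far from the fixed points.

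For the first implication in \eqref{eq:bounds1}, Proposition \ref{prop:tail-of-tau} applied with $(a,b) = (1,0)$ and $(0,1)$ gives $\hat\mu(\tau^+ = n) \lesssim n^{-1-1/\beta_2}$ and $\hat\mu(\tau^- = n) \lesssim n^{-1-1/\beta_1}$ (using Corollary \ref{cor:density} to identify $\hat\mu$ with $\tilde\mu|_{\Delta_0^-}$). Squaring the pointwise bound and integrating reduces matters to summability of $\sum_n n^{2-2\nu_i/\ell_i - 1 - 1/\beta_i}$, which, after substituting $\beta_i = k_{3-i}\ell_i$, is equivalent to $\nu_i > (\beta_i-1/2)/k_{3-i}$, i.e.\ exactly \ref{itm:H1}. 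For the second implication, the event $|\tilde\Phi| > t$ forces (for $t$ large) $S_2(\tau^+) \gtrsim t$ or $S_1(\tau^-) \gtrsim t$; in the non-trivial regime $\nu_i < \ell_i$ this is equivalent to $\tau^{\pm} \gtrsim t^{\ell_i/(\ell_i-\nu_i)}$, and plugging these thresholds into the tails produces a bound of order $t^{-\ell_i/(\beta_i(\ell_i-\nu_i))}$; algebraic simplification (again using $\beta_i = k_{3-i}\ell_i$) shows this is $o(t^{-1/\beta_\varphi})$ precisely when $\nu_i > (\beta_i-\beta_\varphi)/k_{3-i}$, i.e.\ \ref{itm:H2}.

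The main obstacle I anticipate is ensuring the distance asymptotics for the partition elements $\Delta_j^{\pm}$ are sharp and uniform in $j$ and $x$ --- this requires the estimates of Section \ref{sec:est} and careful handling of the constants --- together with cleanly separating the ``deep'' intermittent portion of each excursion from the bounded ``transit'' remainder. Upgrading the $O$-bound coming from the leading-order tail into the required $o$-bound is automatic from the strict inequality in \ref{itm:H2}, since $t^{-a} = o(t^{-b})$ whenever $a > b$.
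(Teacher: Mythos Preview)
Your proposal is correct and follows essentially the same route as the paper: bound $|\tilde\Phi|$ pointwise by $S_2(\tau^+)+S_1(\tau^-)$ via H\"older continuity and the asymptotics $1\mp x_n^\pm\lesssim n^{-1/\ell_{2,1}}$, then feed in the distribution of $\tau^\pm$. The one genuine difference is in how you conclude the second implication: the paper shows $\tilde\Phi\in L^q(\hat\mu)$ for some $q>1/\beta_\varphi$ (by integrating $|\tilde\Phi|^q$ over the partition $\{\delta_{i,j}\}$ using $|\delta_{i,j}|\lesssim i^{-1-1/\beta_2}j^{-1-1/\beta_1}$) and then applies Chebyshev, whereas you invert the pointwise bound to get a threshold $\tau^\pm\gtrsim t^{\ell_i/(\ell_i-\nu_i)}$ and read the tail directly from Proposition~\ref{prop:tail-of-tau}. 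Your route is slightly more direct and avoids the detour through $L^q$; the paper's route has the mild advantage of producing an $L^q$ statement as a byproduct. Both give the same algebraic condition, and your final remark that the strict inequality in \ref{itm:H2} automatically upgrades $O$ to $o$ is exactly right.
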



Proposition \ref{prop:tail-of-tau} gives results for \( \tau_{a,b}\). 

\begin{cor}[Corollary to Proposition \ref{prop:tail-of-tau}]\label{cor:tail-tau}
 If at least one of \( a \coloneqq \varphi(1) \), \( b \coloneqq \varphi(-1) \) is non-zero then:
 \[ 
   \beta_{\varphi} \in [0,1/2) \implies \tau_{a,b} \in L^2( \hat{\mu} ),
   \quand
   \beta_{\varphi} \in [1/2, 1) \implies \tau_{a,b} \in \mathcal{D}_{1/\beta_{\varphi}} . 
 \]
\end{cor}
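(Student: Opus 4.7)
The strategy is to read off both conclusions directly from Proposition \ref{prop:tail-of-tau}, after a preliminary case split on the signs of $a := \varphi(1)$ and $b := \varphi(-1)$. The definition \eqref{eq:def-B-phi} of $\beta_\varphi$ depends only on \emph{which} of $a, b$ vanish, while the distribution of $\tau_{a,b} = a\tau^+ + b\tau^-$ depends also on the \emph{signs} of the non-zero entries. If $a, b$ have the same sign, the relevant estimate is \eqref{eq:a+b-tail} applied to $\tau_{a,b}$ or $-\tau_{a,b}$, as in Remark \ref{rem:dist-tau-a-b}; if they have opposite signs, the upper and lower tails follow from \eqref{eq:a-b-pstve-tail} and \eqref{eq:a-b-neg-tail} in one order or the other. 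In every configuration, the coefficient $C_a$ of $t^{-1/\beta_2}$ is present if and only if $a \neq 0$, and similarly $C_b$ is present if and only if $b \neq 0$, matching the case division in \eqref{eq:def-B-phi}.

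For the assertion $\beta_\varphi \in [0, 1/2) \implies \tau_{a,b} \in L^2(\hat\mu)$, I would use the layer-cake identity
\begin{equation*}
  \int \tau_{a,b}^2 \, d\hat\mu \;=\; 2 \int_0^\infty t\, \hat\mu(|\tau_{a,b}| > t)\, dt
\end{equation*}
together with the tails extracted above: these are $O(t^{-1/\beta_\varphi})$ when $\beta_\varphi > 0$ and exponentially small when $\beta_\varphi = 0$. Since $\beta_\varphi < 1/2$ is equivalent to $1/\beta_\varphi > 2$, the integrand $t \cdot t^{-1/\beta_\varphi}$ is integrable near $+\infty$, and hence $\tau_{a,b} \in L^2(\hat\mu)$.

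For the assertion $\beta_\varphi \in [1/2, 1) \implies \tau_{a,b} \in \mathcal{D}_{1/\beta_\varphi}$, the hypothesis forces $\beta_\varphi > 0$, so we are in the polynomial regime of Proposition \ref{prop:tail-of-tau}. Reading off the two tails and isolating the term of slowest decay, I would verify that
\begin{equation*}
  \hat\mu(\tau_{a,b} > t) = c_1 t^{-1/\beta_\varphi} + o(t^{-1/\beta_\varphi})
  \quand
  \hat\mu(\tau_{a,b} < -t) = c_2 t^{-1/\beta_\varphi} + o(t^{-1/\beta_\varphi}),
\end{equation*}
with $c_1, c_2 \geq 0$ not both zero. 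The only point requiring care is that, when $a, b$ are both non-zero and $\beta_1 \neq \beta_2$, the subdominant power term of order $t^{-1/\min\{\beta_1,\beta_2\}}$ must be absorbed into $o(t^{-1/\beta_\varphi})$; this is automatic since $1/\min\{\beta_1,\beta_2\} > 1/\max\{\beta_1,\beta_2\} = 1/\beta_\varphi$. The error terms $o(t^{-\gamma - 1/\beta})$ from Proposition \ref{prop:tail-of-tau} are likewise $o(t^{-1/\beta_\varphi})$, and positivity of $C_a$ or $C_b$ from \eqref{eq:def-of-Ca-Cb} whenever the corresponding $a$ or $b$ is non-zero guarantees at least one of $c_1, c_2$ is strictly positive. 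I do not anticipate a substantive difficulty beyond this bookkeeping.
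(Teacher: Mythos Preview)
Your overall strategy matches the paper's, but there is a genuine gap in the case \( \beta_\varphi \neq \beta \), which you do not treat with enough care. This case arises precisely when exactly one of \( a, b \) vanishes \emph{and} that non-zero value sits at the fixed point with the \emph{smaller} of \( \beta_1, \beta_2 \); for instance \( a \neq 0 \), \( b = 0 \), \( \beta_\varphi = \beta_2 < \beta_1 = \beta \). In that situation, reading off \eqref{eq:a+b-tail} gives
\[
  \hat\mu(|\tau_{a,b}| > t) = C_a\, t^{-1/\beta_2} + o\bigl(t^{-\gamma - 1/\beta_1}\bigr),
\]
and the remainder decays \emph{slower} than the leading term. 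Your assertion that the tail is \( O(t^{-1/\beta_\varphi}) \) is therefore false in general: for small \( \beta_2 \) and \( \beta_1 \) close to \( 1 \) one can have \( 1/\beta_2 \) arbitrarily large while \( \gamma + 1/\beta_1 \) is bounded by \( 1 + 1/\beta_1 < 1/\beta_2 \) for every admissible \( \gamma \in [0,1) \).

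The conclusion survives, but only after an extra step you have omitted. For the \( L^2 \) claim one does not need \( O(t^{-1/\beta_\varphi}) \); it suffices that both the leading term and the error are \( O(t^{-p}) \) for some \( p > 2 \). The leading term has exponent \( 1/\beta_\varphi > 2 \) since \( \beta_\varphi < 1/2 \), and the error can be pushed past exponent \( 2 \) by choosing \( \gamma \in (2 - 1/\beta, 1) \), which is nonempty precisely because \( g \in \mathfrak F \) forces \( \beta < 1 \). For the \( \mathcal D_{1/\beta_\varphi} \) claim your sentence ``the error terms \( o(t^{-\gamma-1/\beta}) \) are likewise \( o(t^{-1/\beta_\varphi}) \)'' hides the same issue: one must check that \( \gamma \) can be chosen with \( \gamma + 1/\beta > 1/\beta_\varphi \), which holds because \( \beta_\varphi \geq 1/2 \) gives \( 1/\beta_\varphi \leq 2 < 1 + 1/\beta \). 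The paper isolates exactly this case \( \beta_\varphi \neq \beta \) (after first disposing of \( \beta_\varphi = \beta \) in Lemma~\ref{lem:cor}) and makes the choice of \( \gamma \) explicit in each sub-case.
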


We prove Corollary  \ref{cor:tail-tau} and   Proposition \ref{prop:bounds-on-tilde-Phi} in Section \ref{sec:proof-phi-in-Lq}. 
 For now we show how they imply Theorem \ref{thm:limit-theorems}. 
 
\begin{proof}[Proof of Theorem \ref{thm:limit-theorems}]
If  \( \varphi (-1) = \varphi (1) = 0 \)  then  \( \tau_{a,b} \equiv 0 \) and so \( \Phi = \tilde \Phi \), Proposition \ref{prop:bounds-on-tilde-Phi} implies that \( \Phi \in L^{2}( \hat \mu )\) and so \eqref{eq:cond-clt} holds.
 If at least one of \( \varphi (-1),  \varphi (1)  \) is non-zero, we have two cases.
If \( \beta_{\varphi} \in (0,1/2)\), Proposition \ref{prop:bounds-on-tilde-Phi} and Corollary \ref{cor:tail-tau}   give that both \( \tau_{a,b}, \tilde \Phi \in L^{2} ( \hat \mu )\), which implies that  \( \Phi\in L^{2} ( \hat \mu )\)  and therefore \eqref{eq:cond-clt} holds.
 If \( \beta_{\varphi} \in [1/2, 1) \) then  \( \tau_{a,b} \in \mathcal{D}_{1/\beta_{\varphi}}\) by Corollary \ref{cor:tail-tau} and \( \hat \mu ( \pm \tilde \Phi > t ) = o( t^{-1/\beta_{\varphi}} ) \) by Proposition \ref{prop:bounds-on-tilde-Phi}, and therefore   \( \Phi = \tau_{a,b} + \tilde{\Phi} \in \mathcal{D}_{1/\beta_{\varphi}}\) since the tail of \( \tilde \Phi \) is negligible compared to that of \( \tau_{a,b} \). Whence, \eqref{eq:cond-clt-ns} holds when \( \beta_{\varphi} = 1/2 \) and \eqref{eq:cond-SL} holds otherwise.
\end{proof}

\begin{rem} 
The relation between 
\( \Phi \) and \( \tau_{a,b} \) is given formally in \eqref{eq:dec} but it can be useful to have a heuristic idea of this relationships. Given a point \( x \in \delta_{i,j} \) with \( i,j \) both large we know that most of the first \( i \) iterates \( x, g(x), \dots, g^{ i - 1 } (x) \) will lie near the fixed point \( 1 \). Similarly, most of the next \( j \) iterates \( g^{ i } (x) , \dots, g^{ i + j - 1 } \) will lie near the fixed point \( -1 \). Thus, if we assume that \( \varphi \) is ``sufficiently well behaved'' near \( 1 \) and \( -1 \) (in a sense that is made precise by conditions \ref{itm:H1} and  \ref{itm:H2}), it is reasonable to hope that the induced observable \( \Phi \) at the point \( x \) will behave like \( \Phi (x) = \sum_{ k = 0 }^{ n - 1} \varphi \circ g^k \approx a i + b j = \tau_{a,b}(x) \) when  \( a= \varphi(1), b=\varphi(-1)\) are not both zero. 
\end{rem}

\section{The Induced Map}\label{sec:induced}

In this section we prove Proposition~\ref{thm:inducedmap}.  We begin by recalling one of several essentially equivalent definitions of Gibbs-Markov map. 
\begin{defn}
An interval map \( F: I \to I \) is called a (full branch) Gibbs-Markov map if there exists a partition \( \mathcal P \) of \( I \) (mod 0) into open subintervals such that: 
\begin{enumerate}
\item \( F \) is \emph{full branch}: for all \(\omega \in \mathcal P \) the restriction \(F|_{\omega}: \omega \to int(I) \) is a \( C^{1}\) diffeomorphism; 
\item \( F \) is \emph{uniformly expanding}: there exists \( \lambda > 1 \) such that \( |F'(x)|\geq \lambda\) for all \( x\in \omega \) for all \( \omega \in \mathcal P\);
\item \( F \) has \emph{bounded distortion}: 
there exists \( C>0, \theta \in (0,1) \) s.t. for all $\omega\in \mathcal P$  and  all $x, y\in\omega $, 
\[\log \bigg|\frac{F'(x)}{F'(y)}\bigg|\leq  C\theta^{s(x,y)}, 
\] 
where \(
  s ( x, y ) \coloneqq
  \inf \{ n \geq 0 : F^n x \text{ and } F^n y\) lie in different elements of the partition  \(  \mathscr{P} \}. 
\) 
\end{enumerate}
\end{defn}

We will show that the first return map \( G\) defined in \eqref{eq:G} satisfies all the conditions above as well as the saturation condition \eqref{eq:sat}. 
In  Section \ref{sec:top} we describe the topological structure of \( G \) and show that it is  a full branch map with countably many branches which saturates \( I \); this will require only the very basic topological structure of \( g \) provided by condition~\ref{itm:A0}. In Section~\ref{sec:est} we obtain estimates concerning the sizes  of the partition elements of the corresponding partition; this will require the explicit form of the map \( g \) as given in \ref{itm:A1}. In Section \ref{sec:exp} we show that \( G \) is uniformly expanding; this will require the final condition \ref{itm:A2}. Finally, in Section~\ref{sec:dist} we use the estimates and results obtained  to show that \(G \)  has bounded distortion.

\subsection{Topological Construction}\label{sec:top}

In this section we give an explicit and purely topological construction of the first return maps \( G^{-}: \Delta_0^- \to \Delta_0^-\)  and \( G^{+}: \Delta_0^- \to \Delta_0^-\)  which essentially depends only on condition \ref{itm:A0}, i.e.\ the fact that \( g \) is a full branch map with two orientation preserving branches. Recall first of all the definitions of the sets \( \Delta_{n}^{\pm}\) and \( \delta_{n}^{\pm}\) in \eqref{eq:Delta} and \eqref{eq:delta}. It follows immediately from the definitions and from the fact that each branch of \( g \) is a \( C^{2}\) diffeomorphism, that for every  \( n \geq 1 \),  the maps 
\(
 g:\delta_{n}^{-} \to  \Delta_{n-1}^{+}
\) 
and 
\( 
 g:\delta_{n}^{+} \to  \Delta_{n-1}^{-}
\) 
are \( C^{2}\) diffeomorphisms, and, for \( n \geq 2 \), the same is true for the maps 
\(
g^{n-1}: \Delta_{n-1}^{-}\to \Delta_0^{-}, 
\) and 
\( 
g^{n-1}: \Delta_{n-1}^{+}\to \Delta_0^{+},  
\) 
which implies that for every \( n \geq 1\), the maps 
\[
g^{n}: \delta_n^{-} \to \Delta_0^+ 
\quand 
g^{n}: \delta_n^{+} \to \Delta_0^- 
\]
 are  \( C^{2} \) diffeomorphisms. We can therefore define two maps 
\begin{equation}\label{def:Gtilde}
\widetilde G^-:\Delta_{0}^{-} \to \Delta_{0}^{+} \quand 
\widetilde G^+:\Delta_{0}^{+} \to \Delta_{0}^{-}
\quad \text{ by } \quad \widetilde G^\pm|_{\delta_{n}^{\pm} } :=  g^{n}. 
\end{equation}
Notice that these are  \emph{full branch} maps although they have \emph{different domains and ranges}, indeed the domain of one is the range of the other and viceversa. The fact that they are full branch allows us to \emph{pullback} the partition elements \( \delta_{n}^{\pm}\) \emph{into each other}: for every \(m, n \geq 1\) we let 
\[
\delta^-_{m,n} := g^{-m}(\delta^+_n) \cap \delta^-_m
\quand
\delta^+_{m,n} := g^{-m}(\delta^-_n) \cap \delta^+_m.
\] 
Then, for  \( m \geq 1\), the sets 
\(
 \{ \delta_{m,n}^{-}\}_{n\geq 1}  
\) 
and 
\( 
 \{ \delta_{m,n}^{+}\}_{n\geq 1} 
\)
are partitions of \( \delta_m^-\) and \( \delta_m^+\)  respectively and so 
\begin{equation}\label{eq:partitions}
\mathscr P^- :=  \{ \delta_{m,n}^{-}\}_{m,n\geq 1} 
 \quand 
 \mathscr P^+ :=  \{ \delta_{m,n}^{+}\}_{m,n\geq 1} 
\end{equation}
are partitions of \( \Delta_0^-, \Delta_0^+\)  respectively, with the property that for every \( m,n \geq 1\), the maps 
\begin{equation}\label{eq:fullbranch}
g^{m+n}: \delta_{m,n}^{-} \to \Delta_{0}^{-}
\quand 
g^{m+n}: \delta_{m,n}^{+} \to \Delta_{0}^{+}
\end{equation}
are \( C^2\) diffeomorphisms. Notice that  \( m+ n \) is the \emph{first return time} of points in \( \delta_{m,n}^{-} \) and \( \delta_{m,n}^{+} \)  to \( \Delta_{0}^{-} \) and  \( \Delta_{0}^{+} \) respectively and we have thus constructed two \emph{full branch first return induced maps} 
\begin{equation}\label{def:G-}
G^-:=\widetilde G^+ \circ \widetilde G^- :\Delta_{0}^{-} \to \Delta_{0}^{-} \quand G^+:=\widetilde G^- \circ \widetilde G^+ :\Delta_{0}^{+} \to \Delta_{0}^{+}.
\end{equation}
for which we have  
\(
G^-|_{\delta_{m,n}^{-} }= g^{m+n} 
\) and \( G^+|_{\delta_{m,n}^{+} }= g^{m+n}.  
\)

\begin{lem}\label{lem:fullbranch}
The maps \( G^{-}\) and \( G^{+}\) are full branch maps which saturate \( I \)
\end{lem}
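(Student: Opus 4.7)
The plan is to observe that both assertions follow essentially formally from the construction carried out in Section~\ref{sec:top}, so the only real work is careful bookkeeping of indices.

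For the full branch property, note that it is already built into the construction. By \eqref{eq:partitions}, $\mathscr P^- = \{\delta_{m,n}^-\}_{m,n\geq 1}$ is a mod~$0$ partition of $\Delta_0^-$ into open subintervals, and by \eqref{eq:fullbranch} the map $g^{m+n}:\delta_{m,n}^- \to \Delta_0^-$ is a $C^2$ diffeomorphism for every $m,n\geq 1$. Since $G^-|_{\delta_{m,n}^-}=g^{m+n}$ by \eqref{def:G-}, this exhibits $G^-$ as full branch with partition $\mathscr P^-$. The identical argument with roles reversed handles $G^+$.

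For saturation, I first record the explicit itinerary of a point $x\in\delta_{m,n}^-$ under $g$, which is dictated by the definitions in Section~\ref{sec:top}: since $\delta_m^- = g^{-1}(\Delta_{m-1}^+)\cap \Delta_0^-$ and iterated preimages, we have $g^i(\delta_m^-)=\Delta_{m-i}^+$ for $1\leq i\leq m$ (with $g^m(\delta_m^-)=\Delta_0^+$), and then because $\delta_{m,n}^-=g^{-m}(\delta_n^+)\cap\delta_m^-$, the iterates $g^{m+j}$ for $1\leq j\leq n$ land in $\Delta_{n-j}^-$ (with $g^{m+n}(\delta_{m,n}^-)=\Delta_0^-$). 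So the iterates of $\delta_{m,n}^-$ before returning to $\Delta_0^-$ are contained in $\Delta_0^-$, $\Delta_{m-1}^+,\dots,\Delta_0^+$, $\Delta_{n-1}^-,\dots,\Delta_1^-$.

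Next I would check that varying $m,n$ suffices to cover each of these sets mod~$0$. For $\Delta_0^-$ itself, use $i=0$: the partition $\mathscr P^-$ gives $\bigcup_{m,n\geq 1}\delta_{m,n}^- = \Delta_0^-$ mod~$0$. For each $k\geq 0$, to cover $\Delta_k^+$ I would fix any $i\geq 1$ and set $m=k+i$; then
\[
\bigcup_{n\geq 1} g^i(\delta_{k+i,n}^-) \;=\; g^i\!\left(\bigcup_{n\geq 1}\delta_{k+i,n}^-\right) \;=\; g^i(\delta_{k+i}^-) \;=\; \Delta_k^+ \quad (\bmod\,0),
\]
using that $\{\delta_{k+i,n}^-\}_{n\geq 1}$ partitions $\delta_{k+i}^-$ and that $g^i$ restricts to a diffeomorphism $\delta_{k+i}^-\to\Delta_k^+$. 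Symmetrically, for each $k\geq 1$, fixing $j\geq 1$ and setting $n=k+j$ gives
\[
\bigcup_{m\geq 1} g^{m+j}(\delta_{m,k+j}^-) \;=\; g^j(\delta_{k+j}^+) \;=\; \Delta_k^- \quad (\bmod\,0).
\]
Since $I = \Delta_0^-\cup\bigsqcup_{k\geq 1}\Delta_k^- \cup \bigsqcup_{k\geq 0}\Delta_k^+$ mod~$0$ by the remarks following \eqref{eq:Delta}, taking the union of $g^i(\delta_{m,n}^-)$ over all admissible $m,n,i$ yields $I$ mod~$0$, which is the saturation condition \eqref{eq:sat} (using Remark~\ref{rem:disjoint} to align with the formulation via $\{\tau=n\}$). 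The argument for $G^+$ is identical after swapping the roles of $I_-$ and $I_+$.

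There is no substantial obstacle here — the construction in Section~\ref{sec:top} was set up precisely so that this lemma reduces to index tracking. The only item worth attention is making sure that the image $g^i(\delta_{k+i,n}^-)$ is evaluated using the fact that $g^i|_{\delta_{k+i}^-}$ is a diffeomorphism onto $\Delta_k^+$, so that the partition of $\delta_{k+i}^-$ by varying $n$ transfers without loss to a partition of $\Delta_k^+$.
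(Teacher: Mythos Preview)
Your proof is correct and follows the same approach as the paper: the full branch property is read directly from \eqref{eq:fullbranch} and \eqref{def:G-}, and saturation is obtained from the combinatorics of the construction in Section~\ref{sec:top}. Your covering argument for saturation is in fact more explicit than the paper's, which simply asserts (without the index-by-index verification you provide) that the images \( g^j(\delta_{m,n}^-) \) fill out \( I \) mod~\( 0 \).
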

\begin{proof}
The full branch property follows immediately from \eqref{eq:fullbranch}. It then also follows from the construction that   the families  
\[
\{g^j(\delta^-_{m,n})\}_{\substack{m,n\geq 1 \\  0\leq j < m+n}}
\quand 
\{g^j(\delta^+_{m,n})\}_{\substack{m,n\geq 1 \\  0\leq j < m+n}}
\]
of the images of the partition elements \eqref{eq:partitions} are each formed by a collection of \emph{pairwise disjoint} intervals which satisfy 
\[
\bigcup_{\delta^-_{m,n}\in \mathcal P^-} \bigcup_{j=0}^{m+n-1} g^j(\delta^-_{m,n})
= \bigcup_{\delta^+_{m,n}\in \mathcal P^+} \bigcup_{j=0}^{m+n-1} g^j(\delta^-_{m,n})
= I \mod 0
\]
and therefore  clearly satisfy \eqref{eq:sat}, giving the saturation.  
\end{proof}

\begin{rem}\label{rem:G}
Notice that the map \( G^{-}\) is exactly the first return map \( G \) defined in \eqref{eq:G} and therefore Lemma~\ref{lem:fullbranch} implies the first part of Proposition \ref{thm:inducedmap}.
\end{rem}


\subsection{Partition Estimates}\label{sec:est}
 
 The construction of the full branch induced maps \( G^{\pm}: \Delta_0^\pm \to \Delta_0^\pm\) in the previous section is purely topological and works for any map \( g \) satisfying condition \ref{itm:A0}. In this section we proceed to estimate the sizes and positions of the various intervals defined above, and this will require more information about the map, especially the forms of the map as given in \ref{itm:A1}. 
Before stating the estimates we introduce some notation. First of all, we let 
\( ( x_n^{-} )_{n \geq 0} \text{ and }  ( x_{n}^+ )_{ n \geq
 0} \) be the boundary points of the intervals \( \Delta_n^{-}, \Delta_n^{+}\) so that  
\(\Delta_0^{-}=(x_{0}^{-}, 0), \Delta_0^{+}=(0, x_{0}^{+})\) and, for every \( n \geq 1 
\) we have 
\begin{equation}\label{eq:xn}
\Delta_0^{-}=(x_{0}^{-}, 0), \qquad  \Delta_0^{+}=(0, x_{0}^{+}),
\qquad
\Delta_n^{-} = ( x_{n}^{-} , x_{n-1}^{-} ),
\qquad 
\Delta_n^{+} = ( x_{n-1}^{+} , x_n^{+}). 
\end{equation}
  The following proposition gives the speed at which the sequences \( (x_n^+), (x_n^-) \) converge to the fixed points \( 1, -1 \) respectively and gives estimates for the size of the partition elements \( \Delta_n^{\pm} \) for large \( n \) in terms of the values of \( \ell_1\) and \( \ell_2\).
  To state the result we  let 
    \[
  C_1 = \left(\ell_1 b_1\right)^{- {1}/{\ell_1}},
\quad
  C_2 = \left(\ell_2 b_2 \right)^{-{1}/{\ell_2}},
  \quad
  C_3 = \ell_1^{-(1 + 1/\ell_1)} b_1^{-1/\ell_1}
  \quad
  C_4 = \ell_2^{- (1 + 1/\ell_2)} b_2^{-1/\ell_2}.
    \]

\begin{prop}
    \label{prop:xn-and-Deltan}
    If $\ell_1  = 0$, then 
    \begin{equation} \label{eq:xn-0}
 (1+b_1+\epsilon)^{-n}\lesssim 1 + x_n^{-} \lesssim (1+b_1)^{-n}
\quand
     |\Delta_n^{-}| \lesssim  (1+b_1)^{-n}.
      \end{equation}
          If $\ell_1> 0$, then
    \begin{equation} \label{eq:xn-}
     1 + x_n^{-}  \sim C_1 n^{-{1}/{\ell_1}}
         \quand
 |\Delta_n^-| \sim C_3 n^{-\left(1 +{1}/{\ell_1}\right)}. 
   \end{equation}
          If $\ell_2 = 0$, then
    \begin{equation} \label{eq:xn+0}
  (1+b_2+\epsilon)^{-n}\lesssim 1 - x_n^{+} \lesssim  (1+b_2)^{-n}
\quand 
     |\Delta_n^{+}| \lesssim (1 + b_2)^{-n}.
      \end{equation}
  If $\ell_2 > 0$, then 
    \begin{equation} \label{eq:xn+}
      1- x_n^{+} \sim C_2 n^{- {1}/{\ell_2}}
         \quand
|\Delta_n^+| \sim C_4 n^{-\left(1 + {1}/{\ell_2}\right)}.
    \end{equation}
\end{prop}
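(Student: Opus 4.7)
The plan is to analyze the sequences $(x_n^\pm)$ via the defining relation $g(x_n^\pm) = x_{n-1}^\pm$, which realizes them as backward orbits converging to $\pm 1$. The two cases are symmetric under $(b_1, \ell_1) \leftrightarrow (b_2, \ell_2)$, so I would focus on the ``$-$'' side. Setting $u_n := 1 + x_n^-$ so that $u_n \searrow 0$, the key preliminary step is to translate the relation $g(x_n^-) = x_{n-1}^-$ into a recursion for $u_n$. Using the explicit form in \eqref{eqn_1} when $\ell_1 > 0$, this reads
\begin{equation*}
u_{n-1} \;=\; u_n + b_1\, u_n^{\,1+\ell_1},
\end{equation*}
while for $\ell_1 = 0$, \eqref{eq:A1b} together with $\xi(-1) = \xi'(-1) = 0$ and Taylor's theorem yields
\begin{equation*}
u_{n-1} \;=\; (1+b_1)\, u_n + \xi(-1+u_n), \qquad 0 \,\leq\, \xi(-1+u_n) \,=\, O(u_n^2).
\end{equation*}
Subtracting the left side gives the identity $|\Delta_n^-| = u_{n-1} - u_n$ in either case, so the size estimates reduce immediately to asymptotics for $u_n$.

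For the hyperbolic case $\ell_1 = 0$, the upper bound $u_n \lesssim (1+b_1)^{-n}$ comes from iterating $u_{n-1} \geq (1+b_1)\,u_n$ (since $\xi(-1+u_n) \geq 0$ by $\xi'' > 0$ on $U_{-1}$). For the matching lower bound I would fix any $\eps > 0$ and use $u_n \to 0$ to absorb the $O(u_n^2)$ term into a factor $(1+b_1+\eps)$, obtaining $u_{n-1} \leq (1+b_1+\eps)\,u_n$ for all sufficiently large $n$ and hence $u_n \gtrsim (1+b_1+\eps)^{-n}$. The bound $|\Delta_n^-| \lesssim (1+b_1)^{-n}$ is then immediate from $|\Delta_n^-| = b_1 u_n + O(u_n^2)$.

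For the neutral case $\ell_1 > 0$, the main task, and in my view the main technical obstacle, is to identify the \emph{sharp} leading constant $C_1 = (\ell_1 b_1)^{-1/\ell_1}$ rather than merely the polynomial rate $n^{-1/\ell_1}$. The standard Manneville--Pomeau device is to set $w_n := u_n^{-\ell_1}$ and rewrite the recursion as $u_{n-1}^{-\ell_1} = u_n^{-\ell_1}(1 + b_1 u_n^{\ell_1})^{-\ell_1}$. A first-order Taylor expansion of $(1+z)^{-\ell_1}$ at $z = b_1 u_n^{\ell_1} \to 0$ then gives
\begin{equation*}
w_n - w_{n-1} \;=\; \ell_1 b_1 + O(u_n^{\ell_1}) \;=\; \ell_1 b_1 + o(1),
\end{equation*}
so Stolz--Cesaro yields $w_n / n \to \ell_1 b_1$, equivalently $u_n \sim C_1 n^{-1/\ell_1}$. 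Substituting this into $|\Delta_n^-| = b_1 u_n^{\,1+\ell_1}$ and simplifying the prefactor via $b_1 \cdot (\ell_1 b_1)^{-(1+\ell_1)/\ell_1} = \ell_1^{-(1+1/\ell_1)} b_1^{-1/\ell_1} = C_3$ produces the asserted $|\Delta_n^-| \sim C_3\, n^{-(1+1/\ell_1)}$. Running the symmetric argument at $+1$ with $v_n := 1 - x_n^+$, using the corresponding local expressions for $g$ near $+1$, yields \eqref{eq:xn+0} and \eqref{eq:xn+}, completing the proof.
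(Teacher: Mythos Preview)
Your proposal is correct and follows essentially the same approach as the paper: the paper also sets $z_n = 1 + x_n^-$, raises the recursion to the power $-\ell_1$ and expands to obtain $z_n^{-\ell_1} - z_{n-1}^{-\ell_1} = \ell_1 b_1 + o(1)$, then telescopes (your Stolz--Ces\`aro step) to get the sharp asymptotic, while in the $\ell_1 = 0$ case it uses the mean value theorem in place of your Taylor expansion to obtain the same two-sided bound $(1+b_1) \leq u_{n-1}/u_n \leq 1+b_1+o(1)$. The only point you leave implicit, which the paper notes explicitly, is that the local form of $g$ near $-1$ is only valid once $x_n^- \in U_{-1}$, i.e.\ for $n \geq n_+$; this is harmless for the asymptotic statements.
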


\begin{proof}
We will  prove \eqref{eq:xn-0} and \eqref{eq:xn-} and then \eqref{eq:xn+0} and \eqref{eq:xn+} follow by exactly the same arguments.  Notice first of all that  from \eqref{eq:n+-} we have \( \delta^{+}_{n} \subset U_{0+}\) for all \( n > n_{+}\) and, since from \eqref{eq:U} we have that \(   U_{-1}:=g(U_{0+}) \),  this implies that   \( \Delta_{n-1}^{-}:=g(\delta_{n}^{+}) \subset U_{-1}\) for all \( n > n_{+}\), and  thus \( \Delta_{n}^{-} \subset U_{-1}\) for all \( n \geq n_{+}\) which, by the definition of \( x_{n}\) in \eqref{eq:xn}, implies that  \( x_n^{-}\in U_{-1}\) for \( n \geq n_+\).

Now suppose  that \( \ell_1>0\). For \( n \geq n_+\), by  definition of the \( x_{n}^{-} \), we have that $g(x_{n+1}^{-}) = x_n^{-}$, and so $1 + x_{n}^{-} = 1 + x_{n+1}^{-} + b_1 ( 1 + x_{n+1}^- )^{1 + \ell_1}$. Setting \( z_n = 1 + x_n^{-} \) we can write this as \(     z_n = z_{n+1} (1 + b_1 z_{n+1}^{\ell_{1}})\) and, 
 taking the power \( -\ell_{1} \) 
 and expanding we get 
  \begin{align*}
      \frac{1}{z_n^{\ell_{1}}} &= \frac{1}{z_{n+1}^{\ell_{1}}} (1 + b_1 z_{n+1}^{\ell_{1}})^{-\ell_{1}} = \frac{1}{z_{n+1}^{\ell_{1}}} \left( 1 - \ell_{1} b_1z_{n+1}^{\ell_{1}} + O(z_{n+1}^{2\ell_1})\right) = \frac{1}{z_{n+1}^{\ell_{1}}} - \ell_{1}b_1 + o(1).
  \end{align*}
  From the above we know that \( z_n^{-\ell_1} = z_{n-1}^{-\ell_1} + b_1\ell_1 + o(1)\) and applying this relation recursively 
  we obtain that \( z_{n}^{-\ell_1} = \ell_1 b_1 n + o (n) \) 
  which  yields \( x_n^{-} + 1 = ( \ell_1 b_1 n )^{ -{1}/{\ell_1} } (1 + o (1) ) \), thus giving the first statement in \eqref{eq:xn-}.  Now, by definition $\Delta_n^{-} = [ x_{n}^-, x_{n-1}^- ) = [ x_{n}^-, g(x_n^-) )$, so, for all \(n\) large enough, $|\Delta_n^{-}| = g(x_n^-) - x_n^{-} = b_1 (1 + x_n^{-} )^{ 1 + \ell_1}$. Inserting \( x_n^- + 1 \sim C_1 n^{ - 1 / \ell_1 } \) into this expression for $|\Delta_n^-|$ then yields \( |\Delta_n^-| \sim \ell_1^{-(1 + {1}/{\ell_1})} b_1 n^{ -1 - 1/\ell_1} \), completing the proof of \eqref{eq:xn-}.

  Now, for \( \ell_1=0\), since $g(x_n^-)=x_{n-1}^-$, the mean value theorem implies
  \((1+b_1)\leq (x_{n-1}^-+1)/(x_{n}^-+1)\leq (1+b_1+o(1))\) which can be written as \((1+b_1+o(1))^{-1}(x_{n-1}^-+1)\leq x_{n}^-+1\leq(1+b_1)^{-1}(x_{n-1}^-+1)\). Iterating this relation we obtain the claimed bounds for \( x_n^{-} + 1 \). As in the previous case we may calculate using \eqref{eq:A1b} that
  \(
    |\Delta_n^-| = g(x_{n}^{-}) - x_{n}^{-} = -1 + (1 + b_1) ( 1 + x_n^{-} ) + \xi ( x_n^{-} ) - x_{n-1} = b_1 ( 1 + x_n^{-} ) + o(1) \lesssim (1 + b_1 )^{-n}
  \), which concludes the proof.

\end{proof}


%
%

 To get analogous estimates for the intervals \( \delta_n^{-}, \delta_n^{+} \),  we let \( (y_n^{-})_{ n \geq 0 } \) and \( (y_n^{+} )_{ n \geq 0 } \) be the boundary points of the intervals  \( \delta_n^{-}, \delta_n^{+} \) respectively, so that for every \( n\geq 1 \)  we have 
\[
\delta_n^{-} = ( y_{n-1}^{-}, y_{n}^- ) \quand 
 \delta_n^{+} = ( y_{n}^{+}, y_{n-1}^{+}).
 \]  
In particular, 
  \( y_{0}^{-} = x_0^-\), \( y_{0}^{+} = x_0^+\), and \( g(y^-_n) = x^+_{n-1}\), \( g(y^+_n) = x^-_{n-1}\) for \( n \geq 1 \).
 Then we let
  \[
    B_1 = a_1^{-1/k_1} (\ell_2 b_2)^{-1/\beta_{2}},
    \quad
 B_2 = a_2^{-1/k_2} (\ell_1 b_1)^{-1/\beta_{1}}, 
\quad
    B_3 = B_1/\beta_{2},
\quad
    B_4 = B_2/\beta_{1}.
    \]
Recall that \( B_{1}, B_{2}\) have already been defined in \eqref{eq:B}.  
\begin{prop}
    \label{prop:y_n-and-delta_n}
       If $\ell_1= 0$, then for every \( \eps > 0 \)
   \begin{equation}\label{eq:yn-0}
    \left(\frac{1}{1+b_2+\varepsilon}\right)^{\frac{n}{k_1}} \lesssim -y_n^{-} \lesssim \left(\frac{1}{1+b_2}\right)^{\frac{n}{k_1}}
\quand
  |\delta_n^{-}| \lesssim \left(\frac{1}{1+b_2}\right)^{n}.
\end{equation}
    If $\ell_1> 0$, then
        \begin{equation}
        \label{eq:yn1}
         y_n^{-} \sim - B_1 n^{-\frac{1}{\beta_{2}}}, 
  \quand
        |\delta_n^{-}| \sim B_3 n^{-\left( 1 + \frac{1}{\beta_{2}} \right)}.
          \end{equation}
         If $\ell_2 = 0$, then for every \( \eps > 0 \)
     \begin{equation}\label{eq:yn+0}
    \left(\frac{1}{1+b_1+\varepsilon}\right)^{\frac{n}{k_2}} \lesssim y_n^{+} \lesssim \left(\frac{1}{1+b_1}\right)^{\frac{n}{k_2}},
\quand
  |\delta_n^{+}| \lesssim \left(\frac{1}{1+b_1}\right)^{n}.
    \end{equation} 
    If $ \ell_2 > 0$, then
        \begin{equation}
        \label{eq:yn2}
        y_n^{+} \sim B_2 n^{-\frac{1}{\beta_{1}}}, 
\quand
         |\delta_n^{+}| \sim B_4 n^{-\left( 1 + \frac{1}{\beta_{1}} \right)},
    \end{equation}
\end{prop}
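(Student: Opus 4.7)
The plan is to use the topological identities $g(y_n^-) = x_{n-1}^+$ and $g(y_n^+) = x_{n-1}^-$, which follow directly from the definition $\delta_n^\pm = g^{-1}(\Delta_{n-1}^\mp) \cap \Delta_0^\pm$ together with the orientation-preserving property of each branch of $g$, in order to reduce the asymptotics of $y_n^\pm$ to those of $x_{n-1}^\mp$ already established in Proposition~\ref{prop:xn-and-Deltan}. By the definition~\eqref{eq:n+-} of $n_\pm$, for all $n > n_\pm$ the points $y_n^\pm$ lie in $U_{0\pm}$, where the explicit form of $g$ from~\ref{itm:A1}\,(i) applies. Inverting $g(x) = 1 - a_1 |x|^{k_1}$ on $U_{0-}$ and $g(x) = -1 + a_2 x^{k_2}$ on $U_{0+}$ yields the closed-form expressions
\[
    y_n^- = -\,a_1^{-1/k_1}\, (1 - x_{n-1}^+)^{1/k_1}
    \qquad \text{and} \qquad
    y_n^+ = a_2^{-1/k_2}\, (1 + x_{n-1}^-)^{1/k_2}.
\]
Substituting the estimates of Proposition~\ref{prop:xn-and-Deltan} for $1 \mp x_{n-1}^\pm$ immediately delivers all four estimates on $y_n^\pm$ in~\eqref{eq:yn-0}--\eqref{eq:yn2}: in the polynomial case the exponent $1/(k_j \ell_i)$ matches $1/\beta_i$ by definition of $\beta_i$, and the constants $B_1, B_2$ in~\eqref{eq:B} come out exactly from multiplying $a_i^{-1/k_i}$ by the appropriate power of $C_j$.

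For the size estimates the cleanest approach is the mean value theorem applied to the full-branch restriction $g\colon \delta_n^- \to \Delta_{n-1}^+$: there exists $\xi_n \in \delta_n^-$ with $|g'(\xi_n)|\,|\delta_n^-| = |\Delta_{n-1}^+|$, and by~\eqref{deq_5} we have $|g'(\xi_n)| = a_1 k_1 |\xi_n|^{k_1 - 1}$. A short direct argument shows $|\xi_n|/|y_n^-| \to 1$ (the length $|\delta_n^-|$ will be shown to shrink strictly faster than $|y_n^-|$), hence
\[
    |\delta_n^-| \sim \frac{|\Delta_{n-1}^+|}{a_1 k_1 \, |y_n^-|^{k_1 - 1}},
\]
and symmetrically for $|\delta_n^+|$. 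Plugging in the estimate on $|\Delta_{n-1}^\pm|$ from Proposition~\ref{prop:xn-and-Deltan} together with the estimate on $|y_n^\pm|$ just obtained produces the claimed sharp asymptotics in the polynomial regime and the exponential upper bounds in the regime $\ell_i = 0$.

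The proof is essentially routine asymptotic bookkeeping; no conceptual obstacle arises. The one item worth checking carefully is the algebraic collapse of exponents in the polynomial case: the exponent $-(1 + 1/\ell_2)$ coming from $|\Delta_{n-1}^+|$ combines with $(k_1 - 1)/(k_1 \ell_2)$ coming from $|y_n^-|^{-(k_1 - 1)}$ to give precisely $-(1 + 1/\beta_2)$, and the corresponding combination of leading constants $C_4$, $a_1^{-1}$, $k_1^{-1}$, $B_1^{-(k_1 - 1)}$ simplifies, after a short calculation using $\beta_2 = k_1 \ell_2$, to exactly $B_3 = B_1/\beta_2$; the analogous identity handles $B_4$. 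In the exponential regime $\ell_i = 0$ the $\varepsilon$ appearing in the lower bound on $1 \mp x_n^\pm$ in Proposition~\ref{prop:xn-and-Deltan} propagates unchanged through the inversion formula. Finally, the at most finitely many indices $n \leq n_\pm$ for which $\delta_n^\pm$ is not assumed to sit in $U_{0\pm}$ can be absorbed into the implicit constant in any $\lesssim$-statement and drop out of any $\sim$-statement.
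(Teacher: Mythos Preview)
Your proposal is correct and follows the same route as the paper: invert the explicit form of $g$ on $U_{0\pm}$ to write $y_n^\pm$ in terms of $1\mp x_{n-1}^\pm$, then feed in the estimates of Proposition~\ref{prop:xn-and-Deltan}. The only variation is that for $|\delta_n^\pm|$ in the polynomial regime the paper computes $|y_n^- - y_{n+1}^-|$ directly via the identity $n^{-\gamma}-(n+1)^{-\gamma}\sim\gamma\, n^{-1-\gamma}$, whereas you use the mean value theorem uniformly in both regimes; your version is arguably a touch cleaner since it avoids subtracting two $\sim$-equivalent sequences.
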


\begin{proof}
We will prove \eqref{eq:yn-0} and \eqref{eq:yn1}, as \eqref{eq:yn+0} and \eqref{eq:yn2}   follow by  analogous arguments. 
Suppose first  that \( \ell_1 > 0 \). 
  As \(x_n^+ \to 1\), and as
  \(g_{-}(y_n^-) = x_{n-1}^{+}\) we know that for all \(n\) sufficiently large we have \(g_{-} (y_n^-) = 1 - a_1 (-y_n^-)^{k_1} = x_{n-1}^{+}\).  Solving for \( y_n \) this gives
  \[
    y_{n}^{-} = - \left( { (1 - x_{n-1}^{+}) }/{a_1} \right)^{1/k_1}
      = - a_1^{-1/k_1} \left(\ell_2 b_2 n\right)^{-{1}/{\ell_2 k_1}} (1 + o(1))
  \]
which is the first statement in \eqref{eq:yn1}. 
  Now we turn our attention to the size of the intervals \(\delta_n^{-}\). First let us note that for any \(\gamma > 0\) we have that 
  \( 
        n^{-\gamma} - (n + 1)^{-\gamma}
      = n^{-\gamma} \left[1 - (1 + 1/n)^{-\gamma}\right] 
      = n^{-\gamma} \left[ 1 - \left(1 - \gamma/{n} + O(n^{-2})\right) \right]
      = \gamma n^{-(1 + \gamma)} (1 + O(1/n))
  \) 
and therefore 
  \[
        |\delta_n| =    y_n^{-} - y_{n+1}^{-} = B_1 ( n^{-1/\ell_2 k_1} - (n+1)^{-1/\ell_2 k_1}) (1 + o(1)) =  \frac{B_1}{\ell_2 k_1} n^{-(1 + 1/\ell_2 k_1)}(1 + o(1))
  \]
which completes the proof of  \eqref{eq:yn1}. 
  Now for \(\ell_2=0\) we proceed as before, and  by \eqref{eq:xn+0} we get 
  \[
      (1 + b_2 + \eps )^{-n/k_1} \lesssim  - y_n^-  =  \left(  ({ 1 - x_{n-1}^{+} })/{a_1} \right)^{1/k_1} \lesssim (1 + b_2)^{-n/k_1}. 
      \]
  For the size of the interval \( \delta_n^- \), we may use the mean value theorem to conclude that
  \[
    g'(u_n) = \frac{ x_{n-2}^+ - x_{n-1}^+ }{ y_{n-1}^{-} - y_{n}^{-} } = \frac{ | \Delta_{n-1 }^+|}{ |\delta_n^-| },
  \]
  for some \( u_n \in \delta_n^- \). As \( g' \) is monotone on \( U_{0}^- \) we know, from the above and \eqref{eq:xn+0}, that
  \[
    |\delta_n^-| \lesssim  {|\Delta_{n-1}^+|}/{g'(y_n^{-})} \lesssim (1 + b_2 + \eps )^{-n/k_1} ( 1 + b_2 )^{-n}
    \lesssim  ( 1 + b_2 )^{-n}.
  \]
  which concludes the proof.
\end{proof}

\subsection{Expansion Estimates}\label{sec:exp}

\begin{prop}\label{prop:expansion-new}
  For every \( g \in \widehat{\mathfrak F} \) the first return map \( G : \Delta_0^- \to \Delta_0^- \) is uniformly expanding.
\end{prop}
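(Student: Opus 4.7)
The natural approach is to use the factorization $G = \widetilde G^+ \circ \widetilde G^-$ from~\eqref{def:G-}, so that for $x \in \delta_{m,n}^-$ the chain rule gives $G'(x) = (g^n)'(g^m x) \cdot (g^m)'(x)$. The proposition then reduces to establishing, for each $m, n \geq 1$, a positive lower bound on $(g^m)'|_{\delta_m^-}$ and on $(g^n)'|_{\delta_n^+}$, whose product is bounded below by some $\lambda > 1$ uniformly in $(m, n)$.

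For each factor, I would split into two regimes. The ``small-index'' regime $m \leq n_-$ (resp.\ $n \leq n_+$) is precisely the content of~\ref{itm:A2}, which gives $(g^m)' > \lambda$ on $\delta_m^-$ directly. In the ``large-index'' regime $m > n_-$, the definition~\eqref{eq:n+-} of $n_-$ forces $\delta_m^- \subset U_{0-}$, so the explicit formula~\ref{itm:A1} and Proposition~\ref{prop:y_n-and-delta_n} become available. I would decompose $(g^m)'(x) = g'(x) \cdot (g^{m-1})'(g(x))$, compute the first factor pointwise from $g'(x) = a_1 k_1 |x|^{k_1-1}$ using $|x| \approx |y_m^-|$, and estimate the second factor, namely the expansion of $g^{m-1}:\Delta_{m-1}^+ \to \Delta_0^+$, by combining MVT with the standard bounded-distortion telescope
\[
    \log\bigl((g^{m-1})'(z_1)/(g^{m-1})'(z_2)\bigr) \leq \sum_{j=0}^{m-2} \sup_{\Delta_j^+}|g''/g'|\cdot|\Delta_j^+|.
\]
By~\eqref{eq:2ndder1} and Proposition~\ref{prop:xn-and-Deltan} this sum reduces to a convergent series (of order $j^{-2}$ when $\ell_2>0$, geometric when $\ell_2=0$), yielding $(g^{m-1})'(g(x)) \approx |\Delta_0^+|/|\Delta_{m-1}^+|$ and hence $(g^m)'(x) \gtrsim m^{1+1/\beta_2}$ (polynomial, or exponential when $\ell_2=0$), which in particular diverges as $m \to \infty$. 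A symmetric argument controls $(g^n)'|_{\delta_n^+}$.

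Combining, both $(g^m)'|_{\delta_m^-}$ and $(g^n)'|_{\delta_n^+}$ are bounded below by positive constants, with at least one exceeding $\lambda$ whenever the corresponding index lies in the small-index regime, and both tending to infinity in the large regime. The uniform bound $G'(x) \geq \lambda^* > 1$ thus follows outside a finite collection of transition pairs $(m, n)$, which can be absorbed either by enlarging $U_{0\pm}$ slightly (so those branches fall under~\ref{itm:A2}) or by a direct computation showing the MVT bound $|\Delta_0^\pm|/|\delta_m^\mp|$ already exceeds a specific constant once $\delta_m^\mp \subset U_{0\mp}$. The technical heart of the proof, and the step I anticipate as most delicate, is the distortion estimate in the regime $\ell_2 \in (0,1)$ (or $\ell_1 \in (0,1)$), where $g''/g'$ diverges at the neutral fixed point: verifying that the rapidly shrinking $|\Delta_j^+|$ exactly compensates this divergence to give a finite sum is where the core calculation lies.
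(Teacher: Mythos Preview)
Your asymptotic/distortion calculation is essentially correct and indeed yields $(\widetilde G^+)'|_{\delta_n^+} \gtrsim n^{1+1/\beta_1}\to\infty$, but this only gives expansion for \emph{sufficiently large} $n$. The paper explicitly flags this: ``A relatively straightforward computation using those estimates shows that we get expansion for sufficiently large $n\ge 1$, which is quite remarkable but not enough for our purposes.'' The genuine gap in your proposal is the handling of the transition indices $n_+ < n \le N$ (and the analogous range for $m$). For a critical point ($k_2>1$) the first-step derivative $g'(x)=a_2k_2 x^{k_2-1}$ on $\delta_n^+$ can be arbitrarily small, and your lower bound $(g^n)'\ge c$ coming from MVT plus distortion carries an implicit constant $c$ (essentially $|\Delta_0^-|/(\mathfrak D\,|\delta_{n_++1}^+|)$) that you have no a~priori control over; nothing forces $c>1$, and when both $m,n$ lie in the transition range simultaneously the product $c_1c_2$ may well be $<1$. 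Your two proposed absorptions do not close this: enlarging $U_{0\pm}$ goes the wrong way (it \emph{decreases} $n_\pm$, so fewer branches fall under~\ref{itm:A2}), and in any case the neighbourhoods are fixed by~\ref{itm:A1}; and the ``direct computation'' is precisely the non-trivial step you have not supplied.

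The paper resolves this with a different, cleaner idea: it proves that each factor $\widetilde G^\pm$ is \emph{individually} uniformly expanding with the same constant $\lambda$ from~\ref{itm:A2}. The key is the conjugating map $\phi:\delta_{n+1}^+\to\delta_n^+$ defined by $g\circ\phi=g^2$ on $U_{0+}$, for which an explicit pointwise computation (Lemma~\ref{lem:expansion}) shows $(g^2)'(x)>g'(\phi(x))$, hence $(\widetilde G^+)'(x)>(\widetilde G^+)'(\phi(x))$. This monotonicity lets the bound $(\widetilde G^+)'>\lambda$, known on $\delta_{n_+}^+$ by~\ref{itm:A2}, propagate inductively to every $\delta_n^+$ with $n>n_+$. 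No asymptotics or distortion constants enter, and there is no transition regime to absorb.
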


It is enough to prove uniform expansivity for the two maps \( \widetilde G^-, \widetilde G^+\), recall \eqref{def:Gtilde}, since this implies the same property for their composition \( G= G^{-}\), recall \eqref{def:G-}.
To simplify the notation we will only prove the statement for \( \widetilde G^+\), i.e. we will prove that \( x \in \delta_{n}^+ \Rightarrow (g^n)'(x) > \lambda \). The fact that \( x \in \delta_{n}^- \Rightarrow (g^n)'(x) > \lambda \) follows by an identical argument. 

For points outside the neighourhood \( U_{0+}\) on which the map \( g \) has a precise form, more precisely for \( 1\leq n \leq n_+\) and for  \( x \in \delta_{n}^+ \), the expansivity is automatically guaranteed by condition \ref{itm:A2},  but for points close to 0 where the derivative can be arbitrarily small the statement is  non-trivial.  It  ultimately depends  on writing \( \widetilde G^+(x) := g^n(x) \) for \( x \in \delta_{n}^+ \), so that  \( ( \widetilde G^+)'(x) = (g^n)'(x) = (g^{n-1})'(g(x)) g'(x)\), and then showing  that the, potentially small derivative \( g'(x)\) near \( 0 \) is compensated by sufficiently large number of iterates where the derivative is \( > 1 \). This clearly relies very much on the partition estimates in Section \ref{sec:est} which provide a relation between the position of points, and therefore their derivatives, and the corresponding values of \( n \). A  relatively straightforward computation using those estimates shows that we get expansion for \emph{sufficiently large} \( n \geq 1 \), which is quite remarkable but not enough for our purposes as it does not give a complete proof of expansivity for \( \widetilde G^+\) at every point in \( \Delta_0^+\). We therefore need to use a somewhat more sophisticated approach that shows that the derivative of  \( \widetilde G^+\) has a kind of \emph{``monotonicity''} property in the following sense. 
Define the function \( \phi : \Delta_0^+ \setminus \delta_1^+ \to \Delta_0^+ \) given implicitly by \( g^2 = g \circ \phi \) and explicitly by 
\begin{equation}\label{eq:defphi}
\phi \coloneqq (g|_{U_{0+}})^{-1} \circ g|_{U_{-1}} \circ g|_{U_{0+}}
\end{equation}
Notice that  \( \phi \) is the bijection which makes the diagram in Figure~\ref{fig:def-of-phi} commute.
\begin{figure}[h]
  \centering
  \begin{tikzcd}
      \delta_{n + 1}^+  \arrow[r, "\phi"] \arrow[d, "g"] & 
      \delta_{n}^+  \arrow[d, "g"] &
       \\
      \Delta_{n}^-  \arrow[r, "g"] &
      \Delta_{n-1}^-  & 
  \end{tikzcd}
  \caption{\label{fig:def-of-phi} Definition of the map \( \phi : \Delta_{0}^+\setminus \delta_{1}^+ \to \Delta_{0}^+ \).}
\end{figure}

%

The key step in the proof of Proposition \ref{prop:expansion-new} is  the following lemma.

\begin{lem}
  \label{lem:expansion}
For all  \( n \geq n^+\)  and  \( x \in \delta_{n+1}^+\) we have 
  \[
    (g^{2})'(x) > g'(\phi(x)).
  \]
\end{lem}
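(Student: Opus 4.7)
The plan is to replace the stated inequality by the equivalent and more transparent claim that $\phi'(x) > 1$. This equivalence is immediate: differentiating the defining identity $g \circ \phi = g^2$ gives the chain-rule relation $(g^2)'(x) = g'(\phi(x))\,\phi'(x)$. The hypothesis $n \geq n_+$ ensures $x \in \delta_{n+1}^+ \subset U_{0+}$ and, via Proposition~\ref{prop:y_n-and-delta_n} together with the commutative diagram defining $\phi$, that $\phi(x) \in \delta_n^+ \subset U_{0+}$ and $g(x) \in \Delta_n^- \subset U_{-1}$. Hence every derivative that will appear is governed by one of the explicit formulas in \ref{itm:A1}.

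Using $g'(y) = a_2 k_2 y^{k_2-1}$ on $U_{0+}$, I would rewrite the chain-rule identity as
\[
\phi'(x) \;=\; \Bigl(\tfrac{x}{\phi(x)}\Bigr)^{\!k_2 - 1}  g'(g(x)).
\]
By Proposition~\ref{prop:y_n-and-delta_n} we have $x < \phi(x)$, and \ref{itm:A1} gives $g'(g(x)) > 1$ on $U_{-1}$, so the case $k_2 \leq 1$ is immediate: both factors are $\geq 1$ with at least one strict. The substance of the lemma is the case $k_2 > 1$, where $(x/\phi(x))^{k_2-1} < 1$ and one has to show that $g'(g(x))$ compensates. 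Here I would split further by $\ell_1$. If $\ell_1 > 0$, substituting the explicit formulas yields $(\phi(x)/x)^{k_2} = 1+z$ and $g'(g(x)) = 1 + (1+\ell_1)z$, where $z := b_1 a_2^{\ell_1} x^{k_2 \ell_1} > 0$; the desired $\phi'(x) > 1$ becomes $1 + (1+\ell_1)z > (1+z)^{1 - 1/k_2}$ and follows from Bernoulli's inequality. If $\ell_1 = 0$, setting $s := g(x)+1$ and $\tilde\xi(s) := \xi(-1+s)$ gives $(\phi(x)/x)^{k_2} = (1+b_1) + \tilde\xi(s)/s =: A$ and $g'(g(x)) = (1+b_1) + \tilde\xi'(s)$; the convexity bound \eqref{eq:xider} says precisely $\tilde\xi'(s) > \tilde\xi(s)/s$, so $g'(g(x)) > A$, and since $A > 1$ while $1 - 1/k_2 \in (0,1)$ we also have $A > A^{1 - 1/k_2}$, from which $\phi'(x) > 1$ follows.

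The hard part is the sub-case $k_2 > 1,\, \ell_1 = 0$: there both $g'(g(x))$ and $(\phi(x)/x)^{k_2-1}$ tend to the same value $1+b_1$ as $x \to 0$, so nothing short of the refined convexity information encoded in~\eqref{eq:xider} will produce a strict pointwise inequality that holds uniformly throughout $\delta_{n+1}^+$. The $\ell_1 > 0$ sub-case is softer because $(1+\ell_1) - (1-1/k_2) > 0$ leaves a tangible margin in Bernoulli's inequality.
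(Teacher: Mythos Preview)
Your proof is correct and follows essentially the same approach as the paper's: both reduce the statement to the equivalent inequality $g'(g(x)) \cdot (x/\phi(x))^{k_2-1} > 1$, compute $(\phi(x)/x)^{k_2}$ and $g'(g(x))$ explicitly from \ref{itm:A1}, and for $\ell_1=0$ invoke the convexity bound \eqref{eq:xider} in exactly the same way. The only cosmetic difference is in the $\ell_1>0$ step: the paper observes directly that $g'(g(x)) = (\phi(x)/x)^{k_2} + b_1\ell_1 a_2^{\ell_1} x^{k_2\ell_1} > (\phi(x)/x)^{k_2}$ and then multiplies by $(\phi(x)/x)^{1-k_2}$ to get $\phi(x)/x > 1$, whereas you compare $1+(1+\ell_1)z$ to $(1+z)^{1-1/k_2}$ via Bernoulli --- both arrive at the same conclusion with equal ease.
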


\begin{rem}
Lemma \ref{lem:expansion} is equivalent to \( {(g^2)' (x)}/{g'(\phi(x))} > 1\)  
which is equivalent to 
\begin{equation}\label{eq:phiexpansion}
 \frac{ g'(x) }{ g'(\phi(x))} g'(g(x)) > 1. 
\end{equation}
If \( k_2 \in (0,1) \) then we know that \( g' \) is monotone decreasing on \( U_{0}^+ \) and so, as \( x < \phi(x) \), we have that \( g ' ( x ) / g ' ( \phi (x ) ) > 1 \). By construction \( g(x) \in U_{1} \) for every \( x \in U_{0}^+ \), so \( g' ( g (x) ) > 1 \) which concludes that \eqref{eq:phiexpansion} holds.

On the other hand, if \( k_2 > 1 \), then the ratio \({ g'(x) }/{ g'(\phi(x)) } \)  is \( < 1 \) and measures how much derivative is ``lost'' when choosing the initial condition  \( x \) instead of the initial condition \( \phi(x) \) (since \(  \phi(x) > x\) and the derivative is monotone increasing), whereas \( g'(g(x)) > 1 \) measures  how much derivative is ``gained'' from performing an extra iteration of \( g \). The Lemma says that the gain is more than the loss. 
\end{rem}

\begin{proof}
In light of the remark above we will assume that \( k_2 > 1 \). To simplify the notation let us set \( a = a_{2} \), \( b = b_1 \), \( k = k_2 \), and \( \ell = \ell_1 \). 
Notice first of all  that by the form of \( g \) in \( U_{0+}\) given in \ref{itm:A1}  we have 
  \begin{equation}\label{eq:phix}
     \frac{g'(x)}{g'(\phi(x))} = \left(\frac{ x }{ \phi (x) }\right)^{k - 1} = 
     \left(\frac{ \phi (x) }{ x }\right)^{1-k}
  \end{equation}
Recall that \( k > 1\) and \( x< \phi(x)\)  and so the ratio above is \( < 1\). To estimate \( g'(g(x))\) we consider two  cases depending on  \( \ell\). If  \( \ell > 0 \), using the form of \( g \) given in \ref{itm:A1} and plugging into \eqref{eq:defphi} we get 
\[
    \phi (x) =  \left[ x^{k} + b a^\ell x^{ k ( \ell + 1 )} \right]^{{1}/{k}}
 \quad  \text{  and therefore }
 \quad 
\left( \frac{\phi (x)}{x} \right)^k  = 1 + b a^\ell x^{ k \ell }
\]
 and, therefore,  using the form of \( G \) in \( U_{-1}\), this gives 
  \begin{equation} \label{eq:def-phi-on-delta_n+1}
    g'(g(x)) = g'( -1 + a x^k )  = 1 + b a^\ell x^{k \ell} + b \ell a^{\ell} x ^{k \ell} =  \left( \frac{\phi (x)}{x} \right)^k + b\ell a^\ell x^{k \ell}.
  \end{equation}
 From \eqref{eq:phix} and \eqref{eq:def-phi-on-delta_n+1} and the fact that \( x< \phi(x)\) we immediately get 
  \[
    \frac{g'(x)}{g'(\phi(x))} g'(g(x)) 
    =    \left( \frac{\phi(x)}{x} \right)^{1-k}
     \left[ \left( \frac{ \phi(x)} { x} \right)^k  + b \ell a^{\ell} x^{ k \ell} \right] > 
 \frac{\phi(x)}{x} > 1
  \]
  which establishes \eqref{eq:phiexpansion} and completes the case that \( \ell > 0 \).
For  \( \ell = 0 \), proceeding as above we obtain
  \begin{equation}\label{eq:phi-when-ell-0}
    \phi (x) = \left[ (1 + b)x^k + \xi(g(x))/a \right]^{1/k}
     \quad  \text{  and therefore }
 \quad 
\left( \frac{\phi (x)}{x} \right)^k  = (1 + b) + \frac{\xi(g(x))}{ax^k}.
  \end{equation}
Since \( g(x) = -1+ax^k\),  from \eqref{eq:xider} we have \( \xi'(g(x)) \geq 
    \xi(g(x))/(1+g(x))=
  \xi(g(x))/ax^k)\), and so 
    \[
    g'(g(x)) = (1 + b) + \xi'(g(x)) \geq (1 + b) + \frac{\xi(g(x))}{ax^k} = \left( \frac{\phi (x)}{x} \right)^k. 
    \]
    Together with \eqref{eq:phix}, as above, we get the statement in this case also. 
\end{proof}

As an almost immediate consequence of Lemma \ref{lem:expansion} we get the following. 
\begin{cor}\label{cor:gm}
 For all  \( n \geq n^+\)  and   \( x \in \delta_{n+1}^+ \) we have 
  \[
(\widetilde G^{+}){'}(x) > (\widetilde G^{+}){'}(\phi (x)).   \]
\end{cor}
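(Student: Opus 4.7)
The plan is to reduce the statement to Lemma \ref{lem:expansion} by a direct chain-rule computation, exploiting the intertwining relation $g \circ \phi = g^2$ that defines $\phi$. By construction of $\widetilde G^+$ in \eqref{def:Gtilde}, for $x \in \delta_{n+1}^+$ we have $\widetilde G^+(x) = g^{n+1}(x)$, and since $\phi$ sends $\delta_{n+1}^+$ bijectively onto $\delta_n^+$ (as indicated in the commutative diagram of Figure~\ref{fig:def-of-phi}), we have $\widetilde G^+(\phi(x)) = g^n(\phi(x))$. Thus the claim reduces to showing
\[
(g^{n+1})'(x) > (g^n)'(\phi(x)).
\]

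To compare these two derivatives, I would factor out a common chunk using the relation $g(\phi(x)) = g^2(x)$. Writing $(g^{n+1})'(x) = (g^{n-1})'(g^2(x)) \cdot (g^2)'(x)$ and $(g^n)'(\phi(x)) = (g^{n-1})'(g(\phi(x))) \cdot g'(\phi(x)) = (g^{n-1})'(g^2(x)) \cdot g'(\phi(x))$, the factor $(g^{n-1})'(g^2(x))$ is identical in both expressions. Taking the ratio yields
\[
\frac{(g^{n+1})'(x)}{(g^n)'(\phi(x))} = \frac{(g^2)'(x)}{g'(\phi(x))},
\]
and Lemma~\ref{lem:expansion} gives exactly that this ratio is strictly greater than $1$ for $n \geq n^+$ and $x \in \delta_{n+1}^+$.

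There isn't really a substantial obstacle here: the whole content is packed into Lemma~\ref{lem:expansion}, and the corollary is essentially a bookkeeping exercise. The only thing to be slightly careful about is that the common factor $(g^{n-1})'(g^2(x))$ is well-defined and nonzero, which follows from the fact that $g^2(x) \in \Delta_{n-1}^-$ stays outside the critical neighbourhood $U_{0\pm}$ throughout the next $n-1$ iterates (so the chain rule applies with no degenerate factors), and from $g$ being a $C^2$ diffeomorphism on each branch by \ref{itm:A0}.
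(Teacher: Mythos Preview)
Your proof is correct and is essentially the same as the paper's: both reduce the inequality to Lemma~\ref{lem:expansion} by using the chain rule together with $g\circ\phi=g^2$ to cancel the common factor $(g^{n-1})'(g^2(x))$. The paper writes out the full product $g'(x)g'(g(x))\cdots g'(g^n(x))$ and regroups, whereas you factor as $(g^{n-1})'(g^2(x))\cdot(g^2)'(x)$ directly, but the substance is identical.
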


\begin{proof}
By Lemma \ref{lem:expansion} and \eqref{eq:phiexpansion},   for any \( 1 \leq m \leq n \) 
  we have 
  \begin{equation}
    (g^{m+1}) ' (x) = g'(x) g'(g(x)) \cdots g'(g^{m} (x))
    = \frac{ g'(x) g'(g (x))}{ g'( \phi(x)) }  (g^{m})'(\phi (x))
    > (g^{m})'(\phi (x)). 
  \end{equation}
\end{proof}

\begin{proof}[Proof of Proposition~\ref{prop:expansion-new}]
Condition \ref{itm:A2} implies that \( (\widetilde G^{+}){'}(x)\geq \lambda \)  for all  \( x\in \delta^{+}_{n}\) for \( 1\leq n \leq n^{+}\). 
Then, for \( x \in \delta^{+}_{n^{+}+1}\) we have \( \phi(x) \in \delta^{+}_{n^{+}}\) 
and therefore 
\[
(\widetilde G^{+}){'}(x) > (\widetilde G^{+}){'}(\phi (x))  \geq \lambda 
\]
Proceeding inductively we obtain the result. 
\end{proof}

\subsection{Distortion Estimates}
\label{sec:dist}
\begin{prop}\label{prop:boundeddist}
For all \( g \in \widehat{ \mathfrak{F} } \) there exists a constant \( \mathfrak D>0\) such that for all \( 0\leq m < n \) and all \( x, y\in \delta^{\pm}_n\),
\[
\log \dfrac{\left(g^{n-m}\right)'(g^m(x))}{\left(g^{n-m}\right)'(g^m(y))}
\leq  \mathfrak D |g^n(x) - g^n(y)|.
\]
\end{prop}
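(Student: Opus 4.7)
The plan is to express the log of the derivative ratio as a telescoping sum, bound each single-step term by the mean value theorem applied to $\log g'$, and then combine the resulting bounds with the partition estimates from Section~\ref{sec:est} and the explicit control on $g''/g'$ furnished by \eqref{eq:2ndder1} and \eqref{deq_55}. I will describe the case $x,y\in \delta_n^+$; the case $\delta_n^-$ is symmetric, and the cases with $\ell_1=0$ or $\ell_2=0$ are strictly easier because the corresponding partition sizes decay geometrically rather than polynomially.

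First I would write
\[
\log \frac{(g^{n-m})'(g^m(x))}{(g^{n-m})'(g^m(y))} = \sum_{j=m}^{n-1} \log \frac{g'(g^j(x))}{g'(g^j(y))},
\]
and extract from each summand, via the MVT applied to $\log g'$, a point $\xi_j$ between $g^j(x)$ and $g^j(y)$ so that the $j$th term is bounded by $(|g''(\xi_j)|/g'(\xi_j))\cdot |g^j(x)-g^j(y)|$. By the topological construction of Section~\ref{sec:top}, for $1\leq j \leq n-1$ one has $g^j(x), g^j(y)\in \Delta_{n-j}^-$, which lies in $U_{-1}$ once $n-j$ is large; combining \eqref{eq:2ndder1} with Proposition~\ref{prop:xn-and-Deltan} then yields
\[
|g''(\xi_j)|/g'(\xi_j) \lesssim (1+\xi_j)^{\ell_1-1} \lesssim (n-j)^{(1-\ell_1)/\ell_1}.
\]
If $m=0$, the extra term $j=0$ has $\xi_0\in \delta_n^+\subset U_{0+}$, for which \eqref{deq_55} and Proposition~\ref{prop:y_n-and-delta_n} give $|g''(\xi_0)|/g'(\xi_0)\lesssim |\xi_0|^{-1}\lesssim n^{1/\beta_1}$. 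For the distance factor, a second application of the MVT produces $\eta_j \in \Delta_{n-j}^-$ with $|g^j(x)-g^j(y)| = |g^n(x)-g^n(y)|/(g^{n-j})'(\eta_j)$, and the fact that $g^{n-j}\colon \Delta_{n-j}^-\to \Delta_0^-$ is a diffeomorphism provides a point where $(g^{n-j})' = |\Delta_0^-|/|\Delta_{n-j}^-|\sim (n-j)^{1+1/\ell_1}$. Multiplying, each term is controlled by $(n-j)^{-2}|g^n(x)-g^n(y)|$ (with a single $n^{-1}|g^n(x)-g^n(y)|$ contribution from $j=0$), and summing $\sum_k k^{-2}<\infty$ yields the desired Lipschitz constant $\mathfrak{D}$.

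The main technical obstacle is the apparent circularity in converting the \emph{average} derivative $|\Delta_0^-|/|\Delta_{n-j}^-|$ into a uniform \emph{pointwise} lower bound on $(g^{n-j})'$ over $\Delta_{n-j}^-$, since this upgrade would itself seem to require bounded distortion of $g^{n-j}$ on $\Delta_{n-j}^-$. I would break this loop by a bootstrap: applied with the crude estimate $|g^j(x)-g^j(y)|\leq |\Delta_{n-j}^-|$ instead of the MVT bound, the same telescoping calculation gives
\[
\sum_j \frac{|g''(\xi_j)|}{g'(\xi_j)}\, |\Delta_{n-j}^-| \lesssim \sum_k k^{-2} = O(1),
\]
establishing plain \emph{bounded} distortion with no dependence on $|g^n(x)-g^n(y)|$. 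This non-quantitative bound is sufficient to legitimise the pointwise comparison $(g^{n-j})'(\eta_j)\gtrsim |\Delta_0^-|/|\Delta_{n-j}^-|$ for every $\eta_j\in \Delta_{n-j}^-$, after which repeating the calculation with the MVT estimate on $|g^j(x)-g^j(y)|$ upgrades the bound to the Lipschitz form claimed. Finitely many ``short'' terms (bounded $n-j$) for which $\Delta_{n-j}^-$ is not contained in $U_{-1}$ are absorbed into the constant $\mathfrak{D}$ using the smoothness of $g$ away from the neutral and critical points.
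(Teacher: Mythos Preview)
Your proposal is correct and follows essentially the same two-pass bootstrap argument as the paper: first bound the telescoping sum crudely using $|g^j(x)-g^j(y)|\leq |\Delta_{n-j}^-|$ (and $|\delta_n^+|$ for $j=0$) together with \eqref{eq:2ndder1}, \eqref{deq_55} and the partition estimates to obtain plain bounded distortion, then feed this back in to replace $|g^j(x)-g^j(y)|$ by a multiple of $|g^n(x)-g^n(y)|$ and obtain the Lipschitz form. The paper carries out the second pass via the distortion inequalities \eqref{dist1}--\eqref{dist1'} rather than your MVT formulation $|g^j(x)-g^j(y)|=|g^n(x)-g^n(y)|/(g^{n-j})'(\eta_j)$, but these are equivalent once the first-pass bound is in hand.
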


As a consequence we get that \( G \) is a Gibbs-Markov map with constants $C= \mathfrak D\lambda$ and $\theta=\lambda^{-1}$. 

\begin{cor}
  \label{cor:bounded-distortion}
For all $x, y\in\delta_{i,j}\in \mathcal P$ with  \( x \neq y\) we have 
\[\log \bigg|\frac{G'(x)}{G'(y)}\bigg|\leq  \mathfrak D  \lambda^{-s(x,y)+1}.
\] 
\end{cor}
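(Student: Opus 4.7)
The plan is to first upgrade Proposition~\ref{prop:boundeddist} into a one-step distortion bound of the form
\[
\log \left| \frac{G'(x)}{G'(y)} \right| \leq C\, |G(x) - G(y)|
\]
for $x, y$ in a single partition element $\delta_{i,j} \in \mathcal{P}$, and then to convert this into the Gibbs--Markov-style exponential decay in $s(x,y)$ by exploiting the uniform expansion of $G$ established in Proposition~\ref{prop:expansion-new}.

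For the first step I would use the factorisation $G = \widetilde G^+ \circ \widetilde G^-$ with $\widetilde G^-|_{\delta_i^-} = g^i$ and $\widetilde G^+|_{\delta_j^+} = g^j$. Recall from Section~\ref{sec:top} that $\delta_{i,j}^- \subset \delta_i^-$ and $g^i(\delta_{i,j}^-) \subset \delta_j^+$. So for $x,y \in \delta_{i,j}^-$, Proposition~\ref{prop:boundeddist} applied (with $m=0$) on $\delta_i^-$ gives
\[
\log \frac{(g^i)'(x)}{(g^i)'(y)} \leq \mathfrak{D}\,|g^i(x) - g^i(y)|,
\]
and applied on $\delta_j^+$ at the image points $g^i(x), g^i(y)$ gives
\[
\log \frac{(g^j)'(g^i(x))}{(g^j)'(g^i(y))} \leq \mathfrak{D}\,|g^{i+j}(x) - g^{i+j}(y)| = \mathfrak{D}\,|G(x) - G(y)|.
\]
Adding these via the chain rule $G'(x) = (g^j)'(g^i(x))\cdot (g^i)'(x)$, and using that $g^j|_{\delta_j^+}$ is $\lambda$-expanding (Proposition~\ref{prop:expansion-new}) to bound $|g^i(x)-g^i(y)| \leq \lambda^{-1}|G(x)-G(y)|$, yields the desired one-step inequality with constant $\mathfrak{D}(1+\lambda^{-1})$.

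For the second step I would iterate the uniform expansion of $G$. By the definition of $s(x,y)$, the points $G^k(x), G^k(y)$ lie in a common element of $\mathcal{P}$ for $0 \leq k \leq s(x,y)-1$, so Proposition~\ref{prop:expansion-new} yields $|G^{k+1}(x)-G^{k+1}(y)| \geq \lambda\,|G^k(x)-G^k(y)|$ for each such~$k$. Iterating from $k=1$ to $k = s(x,y)-1$ gives
\[
|G(x) - G(y)| \leq \lambda^{-(s(x,y)-1)}\,|G^{s(x,y)}(x) - G^{s(x,y)}(y)| \leq \lambda^{-s(x,y)+1}\,|\Delta_0^-|.
\]
Plugging this into the one-step bound and absorbing the factors $(1+\lambda^{-1})$ and $|\Delta_0^-| \leq 1$ into the constant produces
\[
\log \left| \frac{G'(x)}{G'(y)} \right| \leq \mathfrak{D}\, \lambda^{-s(x,y)+1},
\]
as required.

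There is no real obstacle here: the argument is standard bookkeeping once the two ingredients (Proposition~\ref{prop:boundeddist} on the single-sided partitions $\delta_n^{\pm}$ and uniform expansion of $G$) are in place. The only point requiring slight care is that Proposition~\ref{prop:boundeddist} is stated for $\delta_n^{\pm}$ rather than for the induced partition $\mathcal{P} = \{\delta_{i,j}^-\}$, which forces the two-step application along the natural decomposition $G = \widetilde G^+ \circ \widetilde G^-$ and the use of the uniform expansion of $\widetilde G^+$ on a single branch to consolidate the two error terms into a single multiple of $|G(x)-G(y)|$.
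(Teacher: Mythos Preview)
Your approach is essentially the same as the paper's: use Proposition~\ref{prop:boundeddist} to get a one-step Lipschitz distortion bound for $G$, and then convert $|G(x)-G(y)|$ into $\lambda^{-s(x,y)+1}$ via the uniform expansion of $G$. The paper's proof is terser only because it cites Proposition~\ref{prop:boundeddist} as if it applied directly to $G$ on $\delta_{i,j}$; your explicit two-step application along $G=\widetilde G^+\circ\widetilde G^-$ is exactly what is needed to justify that citation, so in this respect your write-up is more complete than the paper's.

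One small point: your absorption step does not literally recover the constant $\mathfrak D$ stated in the corollary, since the factor $(1+\lambda^{-1})$ exceeds $1$ and need not be offset by $|\Delta_0^-|\le 1$. You obtain the bound with constant $\mathfrak D(1+\lambda^{-1})|\Delta_0^-|$ rather than $\mathfrak D$. This is harmless for the Gibbs--Markov conclusion (any constant suffices), and the paper's own short proof glosses over the same bookkeeping, but you should not claim the final constant is exactly $\mathfrak D$.
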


\begin{proof}
 Let \( n \coloneqq s(x,y)\). 
Since  $G$ is uniformly expanding, we have \(1\geq |G^{n}(x)-G^{n}(y)|=|(G^{n-1})'(u)||G(x)-G(y)|\geq \lambda^{n-1}|G(x)-G(y)| \) and therefore  \(|G(x)-G(y)|\leq \lambda^{-n+1}.\)  By Proposition~\ref{prop:boundeddist} this gives 
\( 
\log |G'(x)/G'(y)|
\leq \mathfrak D|G(x)-G(y)|
\leq \mathfrak D \lambda^{-n+1} = \mathfrak D  \lambda^{-s(x,y)+1}
\). 
\end{proof}

\begin{proof}[Proof of Proposition \ref{prop:boundeddist}]
We begin with a couple of simple formal steps. First of all, by the chain rule, we can write 
\[
\log\dfrac{\left(g^{n-m}\right)'(g^m(x))}{\left(g^{n-m}\right)'(g^m(y))}=\log\prod_{i=m}^{n-1}\dfrac{g'(g^i(x))}{g'(g^i(y))}
=\sum_{i=m}^{n-1}\log\dfrac{g'(g^i(x))}{g'(g^i(y))}.
\]
Then, since \( g^i(x), g^i(y)\) are both in the same smoothness component of   \(g \),  by the Mean Value Theorem, there exists \( u_i\in (g^i(x),g^i(y))\) such that 
\[
\log \dfrac{g'(g^i(x))}{g'(g^i(y))} 
=  \log g'(g^i(x))-\log g'(g^i(y)) = \dfrac{g''(u_i)}{g'(u_i)} |g^i(x)-g^i(y)|.
\]
Substituting this into the expression above, and writing \( \mathfrak D_i:= {g''(u_i)}/{g'(u_i)}\) for simplicity, we get 
\begin{equation}\label{eq:dist1}
\log\dfrac{\left(g^{n-m}\right)'(g^m(x))}{\left(g^{n-m}\right)'(g^m(y))}
= \sum_{i=m}^{n-1} \mathfrak D_i |g^i(x)-g^i(y)| 
\leq \sum_{i=0}^{n-1} \mathfrak D_i |g^i(x)-g^i(y)|.
\end{equation}
We will bound the sum above in two steps. First of all we will show that it admits a uniform bound \( \widehat{\mathfrak D} \) independent of \( m,n\).  We will then use this bound to improve our estimates and show that by paying a small price (increasing the uniform bound to a larger bound \( \mathfrak D:= \widehat{\mathfrak D}^2/{|\Delta_0^-|} \)) we can include the term \( |g^n(x)-g^n(y)| \) as required. Ultimately this gives a stronger result since it takes into account the closeness of the points \( x,y\). 

Let us suppose first for simplicity that \( x, y\in \delta^{+}_n\), the estimates for \( \delta^{-}_n \) are identical.
Then for \( 1\leq i < n \) we have that  \( g^i(x), g^i(y), u_i\in \Delta^{-}_{n-i}\) and therefore we can bound \eqref{eq:dist1} by  
\begin{equation}\label{eq:dist2}
\sum_{i=0}^{n-1} \mathfrak D_i |g^i(x)-g^i(y)|
\leq \mathfrak D_0 |x-y| + \sum_{i=1}^{n-1} \mathfrak D_i |g^i(x)-g^i(y)|
\leq \mathfrak D_0 \delta^{+}_n + \sum_{i=1}^{n-1} \mathfrak D_i |\Delta^{-}_{n-i}|.
\end{equation}
From \eqref{deq_55} and using the relationship between the \( y_n^+ \) and the \( x_n^- \) we may bound the first term by
\begin{equation}\label{eq:di1}
\mathfrak D_0 |\delta_n^{+}|
\lesssim  u_0^{-1}  |\delta_n^{+}| 
\lesssim \frac{ y_n^+ - y_{n+1}^+}{y_{n+1}^+}  \lesssim \left( \frac{ 1 + x_n^- }{1 + x_{n+1}^{-}}\right)^{1/k} - 1 \to c < \infty
\end{equation}
where we have used the fact that  that for some sequence \( \xi_n \to -1 \) we have  \( { (1 + x_n^- )}/{(1 + x_{n+1}^{-})} = g' ( \xi_n ) \) which converges to \( 1 \) if \( \ell > 0 \) (and therefore \( c = 0 \)) or \( 1 + b_1 \) otherwise (and therefore \( c = b_{1} \)).
If \( \ell_1 = 0 \) then \( \mathfrak D_i \) is uniformly bounded for \( i > 0 \), if \( \ell_1 > 0 \), then from \eqref{eq:2ndder1} and \eqref{eq:xn-} we know that
\begin{equation}\label{eq:di2}
\mathfrak D_i |\Delta_{n-i}^-| 
\lesssim (1+u_i)^{\ell_1-1} |\Delta_{n-i}^-| 
\lesssim  (n-i)^{-\frac{\ell_1-1}{\ell_1}} (n-i)^{-\left(1 + \frac{1}{\ell_1}\right)} 
= (n-i)^{-2}.
\end{equation}
Then by \eqref{eq:di1} and \eqref{eq:di2}  we find that \
\begin{equation}\label{eq:dist3}
\widehat{ \mathfrak D } \coloneqq \exp \left\{ \mathfrak D_0 \delta_{n}^+ + \sum_{i = 1}^\infty \mathfrak D_i \Delta_{ n - i } \right\} \leq \exp \left\{  \mathfrak D_0  + \sum_{i = 1}^\infty \mathfrak D_i \Delta_{ n - i } \right\} < \infty.
\end{equation}
 Substituting this back into \eqref{eq:dist2} and then into \eqref{eq:dist1} we get

\begin{equation}\label{eq:dist4}
 \log\bigg|\dfrac{\left(g^{n-m}\right)'(g^m(x))}{\left(g^{n-m}\right)'(g^m(y))}\bigg|
\leq \log \widehat{\mathfrak D}
\end{equation}
which completes the first step in the proof, as discussed above. We now take advantage of this bound to improve our estimates as follows. By a standard and straightforward application of the Mean Value Theorem, \eqref{eq:dist4} implies that the diffemorphisms \( g^{n}: \delta_{n}^+ \to \Delta_0^-\) and \( g^{n-m}: \Delta_{n-m}^- \to \Delta_0^-\) all have uniformly bounded distortion in the sense that for every \( x,y \in \delta^+_n\) and \( 1\leq m < n \) we have
\begin{align}\label{dist1}
\dfrac{|x-y|}{|\delta_{n}^+|}
\leq \widehat{\mathfrak D}
\dfrac{|g^n(x)-g^n(y)|}{|\Delta_0^-|}
\end{align}
and 
\begin{align}\label{dist1'}
\dfrac{|g^m(x)-g^m(y)|}{|\Delta_{n-m}^+|}
\leq \widehat{\mathfrak D}
\dfrac{|g^{n-m}(g^m(x))-g^{n-m}(g^m(y))|}{|\Delta_0^-|}=\widehat{\mathfrak D}
\dfrac{|g^n(x))-g^n(y)|}{|\Delta_0^-|}.
\end{align}
Therefore 
\[
|x-y|
\leq  \frac{\widehat{\mathfrak D}}{|\Delta_0^-|} 
|g^n(x)-g^n(y)|{|\delta_{n}^+|}
\quand 
|g^m(x)-g^m(y)|
\leq  \frac{\widehat{\mathfrak D}}{|\Delta_0^-|} 
|g^n(x)-g^n(y)|
{|\Delta_{n-m}^-|}.
\] 
Substituting these bounds back into \eqref{eq:dist1} (with \( i=m\)), and letting \( \mathfrak D:= \widehat{\mathfrak D}^2/{|\Delta_0^-|} \), we get 
\[
\begin{aligned}
\log\frac{\left(g^{n-m}\right)'(g^m(x))}{\left(g^{n-m}\right)'(g^m(y))}
& \leq \sum_{i=0}^{n-1} \mathfrak D_i |g^i(x)-g^i(y)| 
= \mathfrak D_0 |x-y| + \sum_{i=1}^{n-1} \mathfrak D_i |g^i(x)-g^i(y)| 
\\ & 
\leq \mathfrak D_0   \frac{\widehat{\mathfrak D}}{|\Delta_0^-|} 
|g^n(x)-g^n(y)|  {|\delta_{n}^+|}
+ \sum_{i=1}^{n-1} \mathfrak D_i \frac{\widehat{\mathfrak D}}{|\Delta_0^-|} 
|g^n(x)-g^n(y)|{|\Delta_{n-i}^-|}
\\ & 
=  \frac{\widehat{\mathfrak D}}{|\Delta_0^-|} 
 \left[
 \mathfrak D_0  
 {|\delta_{n}^+|}
+ \sum_{i=1}^{n-1} \mathfrak D_i 
{|\Delta_{n-i}^-|}
\right] |g^n(x)-g^n(y)|
\\ &
\leq 
\frac{\widehat{\mathfrak D}^2}{|\Delta_0^-|}  |g^n(x)-g^n(y)| 
= \mathfrak D |g^n(x)-g^n(y)|.
\end{aligned}
\]
Notice that the last inequality follows from \eqref{eq:dist3}. This completes the proof. 
\end{proof}

We state here also a simple corollary of Propositions \ref{prop:y_n-and-delta_n}  and \ref{prop:boundeddist} which we will use in Section~\ref{sec:statistical}.

\begin{lem}\label{lem:estimates-on-delta-ij}
For all \( i,j \geq 1\)  we have 
  \begin{equation}\label{eq:estimates-delta-ij}
  \tilde \mu( \delta_{i,j} ) \lesssim
    \begin{cases}
      i^{ -(1 + 1/\beta_{2}) } j^{ -( 1 + 1/\beta_{1})},&\text{if } \ell_1, \ell_2 > 0 \\
      (1 + b_2)^{-i}, &\text{if } \ell_1 =0 , \ell_2 > 0  \\
      (1 + b_1)^{-j} , &\text{if } \ell_2 =0 , \ell_1 > 0  \\
      \min\{ (1 + b_1), (1 + b_2) \}^{-i -j}, &\text{if } \ell_1 =0 , \ell_2 = 0
    \end{cases}
  \end{equation}
\end{lem}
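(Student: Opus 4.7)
The plan is to combine two ingredients already established in the section: the uniform distortion bound of Proposition~\ref{prop:boundeddist} to factorize $|\delta_{i,j}|$ essentially as $|\delta_i^-|\cdot|\delta_j^+|$, and then the explicit size estimates from Proposition~\ref{prop:y_n-and-delta_n} to dispatch each of the four regimes. The lemma is really a packaging statement: the serious work is done elsewhere.

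First, since $\delta_{i,j}\coloneqq \delta_{i,j}^-\subset \Delta_0^-$ and the density $\tilde h$ is uniformly bounded on $\Delta_0^-$ by Corollary~\ref{cor:density}, it suffices to bound Lebesgue measure: $\tilde\mu(\delta_{i,j})\lesssim|\delta_{i,j}|$. By construction $\delta_{i,j}^-\subset\delta_i^-$ and, from \eqref{def:Gtilde}, the map $g^i:\delta_i^-\to\Delta_0^+$ is a $C^2$ diffeomorphism that sends $\delta_{i,j}^-$ onto $\delta_j^+$. Applying Proposition~\ref{prop:boundeddist} with $m=0$ and $n=i$ to pairs of points in $\delta_i^-$ bounds the distortion of $(g^i)'$ by $e^{\mathfrak D|\Delta_0^+|}$, a constant independent of $i$. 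A standard mean value argument then yields
\[
    \frac{|\delta_{i,j}^-|}{|\delta_i^-|}\;\lesssim\;\frac{|\delta_j^+|}{|\Delta_0^+|},
\]
so $\tilde\mu(\delta_{i,j})\lesssim|\delta_i^-|\cdot|\delta_j^+|$ uniformly in $i,j\geq 1$.

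It then remains to insert the size bounds from Proposition~\ref{prop:y_n-and-delta_n}: depending on whether $\ell_1$ vanishes, $|\delta_i^-|$ is either polynomial at rate $i^{-(1+1/\beta_2)}$ or exponential at rate $(1+b_2)^{-i}$; the analogous dichotomy in $\ell_2$ governs $|\delta_j^+|$. Multiplying the two bounds reproduces exactly the four cases of \eqref{eq:estimates-delta-ij}: in the two mixed regimes the exponential factor absorbs the polynomial one (which is $\lesssim 1$), and in the doubly-exponential regime one uses the elementary inequality
\[
    (1+b_2)^{-i}(1+b_1)^{-j}\;\leq\;\min\{1+b_1,1+b_2\}^{-(i+j)},
\]
which, assuming without loss of generality $b_1\leq b_2$, reduces to $(1+b_2)^{-i}\leq (1+b_1)^{-i}$.

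There is no substantive obstacle. The only point to check with care is that the distortion constant $\mathfrak D$ provided by Proposition~\ref{prop:boundeddist} is genuinely independent of the inducing time $i$ (so that the factorization step is uniform), but this is precisely how that proposition is formulated.
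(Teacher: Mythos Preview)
Your proof is correct and follows essentially the same approach as the paper: use the uniform distortion bound of Proposition~\ref{prop:boundeddist} to factorize \(|\delta_{i,j}|\approx |\delta_i^-|\,|\delta_j^+|\), invoke the boundedness of the density on \(\Delta_0^-\) (Corollary~\ref{cor:density}) to pass from \(\tilde\mu\) to Lebesgue, and then plug in the size estimates of Proposition~\ref{prop:y_n-and-delta_n}. The paper's proof is slightly terser but the argument is identical.
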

\begin{proof}
Proposition \ref{prop:boundeddist} implies that \(  |\delta_{ij}| \approx |\delta_{i}^{-}| |\delta_{j}^{+}| \) 
uniformly for all \( i,j \geq 1\).  Indeed, more precisely, it implies 
\(
\mathfrak D^{-1} {|g^{i}(\delta_{ij})|}/{|g^{i}(\delta_{i})|} \leq  {|\delta_{ij}|}/{|\delta_{i}|} \leq 
\mathfrak D  {|g^{i}(\delta_{ij})|}/{|g^{i}(\delta_{i})| }
\)
which implies 
\( 
|\delta_{i}^{-}| |\delta_{j}^{+}|/\mathfrak D |\Delta_{0}^{+}|  \leq |\delta_{ij}| 
\leq\mathfrak D |\delta_{i}^{-}| |\delta_{j}^{+}|/ |\Delta_{0}^{+}|.
\) 
As \( \tilde \mu \) is equivalent to Lebesgue on \( \Delta_0^-\cup \Delta_{0}^{+} \) we obtain the Lemma immediately from   Proposition \ref{prop:y_n-and-delta_n}.
\end{proof}

%
%

\section{Statistical Properties}\label{sec:statistical} 





In Section \ref{sec:tail} we prove Proposition \ref{prop:tail-of-tau} and in Section~\ref{sec:proof-phi-in-Lq} we prove Proposition~\ref{prop:bounds-on-tilde-Phi} and Corollary~\ref{cor:tail-tau}.  As discussed in Section \ref{sec:distind} this completes the proof of  Theorem~\ref{thm:limit-theorems}.

\subsection{Distribution and Tail Estimates}\label{sec:tail}

In this section we prove Proposition \ref{prop:tail-of-tau}.
We will only explicitly prove~\eqref{eq:a+b-tail} and~\eqref{eq:a-b-pstve-tail} as the proof of~\eqref{eq:a-b-neg-tail} is identical to that of~\eqref{eq:a-b-pstve-tail}. For \( a, b \geq 0 \) consider the following decompositions
\begin{align}
  \tilde \mu( a\tau^{+} + b \tau^{-} > t ) 
  =& \tilde \mu( a\tau^{+} > t) + \tilde \mu( b\tau^{-} > t)  \label{eq:tail-of-Phi-positive-case1}  \\
  \label{eq:tail-of-Phi-positive-case}  
  &- \tilde \mu ( a\tau^{+} > t, b\tau^{-}  > t) + \tilde \mu ( a\tau^{+} + b \tau^{-} > t, \max\{a\tau^{+}, b\tau^{-}\} \leq t )
\end{align}
and
\begin{align} 
  \label{eq:tail-of-Phi-mixed-case}
  \tilde\mu ( a \tau^+ - b\tau^- > t  ) =&  \tilde \mu ( a\tau^{+} > t) 
  \\
  &- \tilde \mu( a\tau^{+} > t, a\tau^{+} - b \tau^{-} \leq t ).
  \label{eq:tail-of-Phi-mixed-case1}
\end{align}

We can then reduce  the proof to two further  Propositions.  
First of all we give precise asymptotic estimates of the terms \( \mu ( a \tau^+ > t ) \), \( \mu ( b \tau^- > t ) \) which make up \eqref{eq:tail-of-Phi-positive-case1} and \eqref{eq:tail-of-Phi-mixed-case}.

\begin{prop}\label{lem:dist-of-atau-and-btau}
    For every \( a,b \geq 0 \) and for every \( \gamma \in ( 0, 1 ) \)
    \begin{equation}\label{eq:dist-of-tau+}
        \tilde\mu ( a\tau^+ > t ) = 
        \begin{cases} 
            C_a t^{-1/\beta_2} + o( t^{-\gamma - 1/\beta_2} ) & \text{if } \ell_1 > 0  \\
            O \left( ( 1 + b_2 )^{-t/ak_1} \right)& \text{if } \ell_1 = 0
        \end{cases}
    \end{equation}
    and 
    \begin{equation}\label{eq:dist-of-tau-}
        \tilde \mu ( b\tau^- > t ) = 
        \begin{cases} 
             C_b t^{-1/\beta_1} + o( t^{-\gamma - 1/\beta_1} ) & \text{if } \ell_2 > 0  \\
            O \left( ( 1 + b_1 )^{-t/bk_2} \right)& \text{if } \ell_2 = 0
        \end{cases}
    \end{equation}
\end{prop}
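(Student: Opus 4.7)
The key observation underlying the proof is a combinatorial identification of the level sets of $\tau^\pm$ via the partition constructed in Section~\ref{sec:induced}: for $x\in\delta_{i,j}^-\subset\Delta_0^-$ the orbit under $g$ traverses $\Delta_{i-1}^+,\dots,\Delta_0^+$ at times $1,\dots,i$ (all in $I_+$) and then $\Delta_{j-1}^-,\dots,\Delta_0^-$ at times $i+1,\dots,i+j$ (all in $I_-$) before returning to $\Delta_0^-$. Hence $\tau^+\equiv i$ and $\tau^-\equiv j$ on $\delta_{i,j}^-$, so
\[
\{\tau^+>N\} \;=\; \bigcup_{i>N}\delta_i^- \;=\; (y_N^-,0), \qquad \{\tau^->N\} \;=\; (\widetilde G^-)^{-1}\bigl((0,y_N^+)\bigr).
\]

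To prove \eqref{eq:dist-of-tau+} I use that by Corollary~\ref{cor:density} the density $\tilde h$ of $\tilde\mu$ is bounded and Lipschitz on $\Delta_0^-$. Decomposing $\tilde h(x)=\tilde h(0^-)+(\tilde h(x)-\tilde h(0^-))$ and integrating on $(y_N^-,0)$ gives $\tilde\mu((y_N^-,0))=\tilde h(0^-)|y_N^-|+O(|y_N^-|^2)$. Substituting $N=\lceil t/a\rceil$ and inserting the estimates from Proposition~\ref{prop:y_n-and-delta_n}, namely $|y_N^-|\sim B_1 N^{-1/\beta_2}$ when $\ell_1>0$ and $|y_N^-|\lesssim(1+b_2)^{-N/k_1}$ when $\ell_1=0$, together with the identity $\tilde h(0^-)\,B_1\,a^{1/\beta_2}=C_a$, yields both cases of \eqref{eq:dist-of-tau+}.

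For \eqref{eq:dist-of-tau-} the extra ingredient is the pushforward identity $(\widetilde G^-)_*\hat\mu=\tilde\mu|_{\Delta_0^+}$, which I verify as follows. In the series \eqref{eq:mu} only the term $n=\tau^+(x)$ can contribute to $\tilde\mu(B)$ for $B\subset\Delta_0^+$, because the orbit of any $x\in\Delta_0^-$ with $\tau(x)\geq n$ visits $\Delta_0^+$ exclusively at time $\tau^+(x)$; this collapses the sum to $\hat\mu((\widetilde G^-)^{-1}(B))$. Applying this with $B=(0,y_N^+)$ and then repeating the Lipschitz-integral argument on $\Delta_0^+$, using the corresponding estimates $|y_N^+|\sim B_2 N^{-1/\beta_1}$ (resp.\ $\lesssim(1+b_1)^{-N/k_2}$) and the identity $\tilde h(0^+)\,B_2\,b^{1/\beta_1}=C_b$, yields \eqref{eq:dist-of-tau-}.

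The one delicate point is the little-$o$ error valid for every $\gamma\in(0,1)$: the Lipschitz correction $O(|y_N^\pm|^2)$ is automatically absorbed since $\beta_i<1$, but the bare asymptotic $|y_N^\pm|\sim B_{2,1}\,N^{-1/\beta_{2,1}}$ coming from Proposition~\ref{prop:y_n-and-delta_n} only provides a $(1+o(1))$ correction rather than the required polynomial-speed remainder. I will therefore sharpen the recurrence analysis of Proposition~\ref{prop:xn-and-Deltan}: tracking the remainder $O(z_{n+1}^{2\ell_1})=O(n^{-2})$ explicitly yields $1/z_n^{\ell_1}=1/z_{n+1}^{\ell_1}-\ell_1 b_1+O(1/n)$; telescoping gives $1/z_n^{\ell_1}=\ell_1 b_1\,n+O(\log n)$ and inverting provides $|y_N^-|=B_1 N^{-1/\beta_2}+O(N^{-1/\beta_2-1}\log N)$, which under $N\asymp t$ is $o(t^{-\gamma-1/\beta_2})$ for every $\gamma<1$; the argument for $y_N^+$ is identical.
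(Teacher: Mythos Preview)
Your proof is correct and follows essentially the same strategy as the paper's: identify the super-level sets of $\tau^\pm$ as the intervals $(y_N^-,0)$ and $(0,y_N^+)$, use the Lipschitz regularity of the density $\tilde h$ on $\Delta_0^\pm$ to extract the leading term $\tilde h(0^\mp)|y_N^\mp|$, and then substitute the asymptotics of $y_N^\pm$ from Proposition~\ref{prop:y_n-and-delta_n}.

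There are two differences worth noting. First, for the identity $\tilde\mu(\tau^->N)=\tilde\mu(0,y_N^+)$ the paper (Lemma~\ref{lem:distribution}) uses a recursive decomposition of $g^{-1}(\delta_j^+)$ together with the $g$-invariance of $\tilde\mu$ to show $\sum_{i\ge 1}\tilde\mu(\delta_{i,j})=\tilde\mu(\delta_j^+)$; your route via the pushforward identity $(\widetilde G^-)_*\hat\mu=\tilde\mu|_{\Delta_0^+}$, read off directly from the defining series~\eqref{eq:mu}, is an equivalent but somewhat cleaner way to reach the same conclusion. Second, you explicitly address the error term: the paper's proof writes the remainder as $O(t^{-2/\beta_2})$, which accounts only for the Lipschitz correction $O((y_N^-)^2)$ and tacitly assumes a sharper asymptotic for $y_N^-$ than the bare $\sim$ of Proposition~\ref{prop:y_n-and-delta_n}. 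Your refinement of the recurrence, yielding $1/z_n^{\ell}=\ell b\, n+O(\log n)$ and hence $|y_N^\pm|=B\,N^{-1/\beta_i}+O(N^{-1-1/\beta_i}\log N)$, is exactly what is needed to justify the $o(t^{-\gamma-1/\beta_i})$ remainder for every $\gamma<1$, and fills a small gap the paper leaves implicit.
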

Then, we  show that the remaining terms \eqref{eq:tail-of-Phi-positive-case} and \eqref{eq:tail-of-Phi-mixed-case1} in the decompositions above have negligible contribution to the leading order asymptotics of the tail.

\begin{prop}
\label{prop:higher-order}
 If at least one of \( \ell_1, \ell_2 \) are not zero,  then for every \( a, b \geq 0 \), \( \gamma \in ( 0, 1 ) \) we have 
  \begin{equation}
    \label{eq:1st-error-term-in-ai-bj1}
    \tilde \mu ( a\tau^{+} > t, b\tau^{-} > t )  + 
    \tilde \mu ( a\tau^{+} + b \tau^{-} > t, \max\{a\tau^{+}, b\tau^{-}\} \leq t )
    = o( t^{-\gamma -1/\beta})
  \end{equation}
 and 
    \begin{equation}\label{eq:error-termin-ai-minus-bj}
    \tilde \mu ( a\tau^{+} > t , a\tau^{+} - b \tau^{-} \leq t )  = o(   t^{ - \gamma -  1/\beta}) 
  \end{equation}
\end{prop}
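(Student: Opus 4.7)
The plan is to express each of the probabilities in~\eqref{eq:1st-error-term-in-ai-bj1}--\eqref{eq:error-termin-ai-minus-bj} as a sum of \( \tilde\mu(\delta_{i,j}) \) over an explicit region of the \((i,j)\)-plane, and then to estimate these sums using Lemma~\ref{lem:estimates-on-delta-ij}. The starting observation is that on each cell \( \delta_{i,j} \subset \Delta_0^- \) one has exactly \( \tau^-(x) = i \) and \( \tau^+(x) = j \): by the double inducing construction of Section~\ref{sec:top}, a point \( x \in \delta_{i,j} \) first traverses \( \Delta_{i-1}^-, \dots, \Delta_0^- \subset I_- \) in \( i \) steps, lands in \( \delta_j^+ \subset \Delta_0^+ \), and then spends \( j \) further iterates in \( I_+ \) before returning to \( \Delta_0^- \). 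We focus throughout on the main case \( \ell_1, \ell_2 > 0 \) with \( \beta < 1 \), so that \( \beta_1, \beta_2 \in (0,1) \) and Lemma~\ref{lem:estimates-on-delta-ij} gives the cell estimate \( \tilde\mu(\delta_{i,j}) \lesssim i^{-(1+1/\beta_2)} j^{-(1+1/\beta_1)} \).

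For the first term in~\eqref{eq:1st-error-term-in-ai-bj1} the two constraints decouple:
\[
  \tilde\mu(a\tau^+ > t,\, b\tau^- > t) \lesssim \Big(\sum_{j > t/a} j^{-(1+1/\beta_1)}\Big)\Big(\sum_{i > t/b} i^{-(1+1/\beta_2)}\Big) \lesssim t^{-(1/\beta_1 + 1/\beta_2)}.
\]
Since \( \min\{1/\beta_1, 1/\beta_2\} = 1/\beta \) and \( \max\{1/\beta_1, 1/\beta_2\} > 1 \), the exponent strictly exceeds \( 1 + 1/\beta \), hence is \( o(t^{-\gamma - 1/\beta}) \) for any \( \gamma < 1 \). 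The estimate~\eqref{eq:error-termin-ai-minus-bj} is treated analogously: the event forces \( i \geq (aj-t)/b \), and after substituting \( s = aj - t \geq 0 \) the inner sum is bounded by \( O(1) \wedge (s/b)^{-1/\beta_2} \); since \( 1/\beta_2 > 1 \) the series \( \sum_s s^{-1/\beta_2} \) converges, yielding an overall bound of \( O(t^{-(1+1/\beta_1)}) = o(t^{-\gamma - 1/\beta}) \).

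The main technical obstacle is the second term in~\eqref{eq:1st-error-term-in-ai-bj1}, whose domain is the triangle \( \{aj + bi > t,\ aj \leq t,\ bi \leq t\} \). A naive estimate that merely bounds the inner sum by a constant yields at best \( O(t^{-1/\beta}) \), which is not sufficient. The refinement is to exploit that, for \( j \leq t/(2a) \), the constraint confines \( i \) to an interval of length \( aj/b \) near \( t/b \), so that a mean-value expansion gives
\[
  \sum_{(t-aj)/b < i \leq t/b} i^{-(1+1/\beta_2)} \approx \frac{aj}{b}\Big(\frac{t}{b}\Big)^{-(1+1/\beta_2)}.
\]
The extra factor of \( j \) reduces the outer sum to \( \sum_j j^{-1/\beta_1} \), which converges as \( 1/\beta_1 > 1 \), and so contributes \( O(t^{-(1+1/\beta_2)}) \). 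A symmetric treatment of the range \( j \in (t/(2a), t/a] \), parametrised by \( s = t - aj \), contributes \( O(t^{-(1+1/\beta_1)}) \). The combined bound is \( O(t^{-(1+1/\beta)}) \), which is \( o(t^{-\gamma - 1/\beta}) \) exactly because \( \gamma < 1 \).

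Finally, when precisely one of \( \ell_1, \ell_2 \) vanishes, Proposition~\ref{prop:y_n-and-delta_n} gives exponential decay of the partition sizes on the corresponding side. In each of the sums above the polynomial tail in that variable is replaced by a geometric one, rendering the final bound exponentially small in \( t \) and therefore trivially \( o(t^{-\gamma - 1/\beta}) \).
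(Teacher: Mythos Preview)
Your overall strategy coincides with the paper's: write each probability as a sum of $\tilde\mu(\delta_{i,j})$ over the appropriate region of the $(i,j)$-lattice and invoke Lemma~\ref{lem:estimates-on-delta-ij}. One slip to fix: for $x\in\delta_{i,j}\subset\Delta_0^-$ the construction gives $\tau^+(x)=i$ and $\tau^-(x)=j$, not the reverse (see the proof of Lemma~\ref{lem:distribution}, and note $g(\delta_i^-)=\Delta_{i-1}^+\subset I_+$, so the orbit enters $I_+$ \emph{immediately}, contrary to your description). The swap is harmless for the conclusion, since the target bound $o(t^{-\gamma-1/\beta})$ is symmetric in $\beta_1,\beta_2$ and the statement is for all $a,b\ge 0$, but the orbit description and the labelling should be corrected.

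Where your argument genuinely differs is in the triangle term of~\eqref{eq:1st-error-term-in-ai-bj1}. The paper bounds the inner sum by $\big((t-ai+1)/b\big)^{-1/\beta}-(t/b)^{-1/\beta}$, expands $(1-k/t)^{-1/\beta}$ via the binomial series, and arrives at $O(t^{-1-1/\beta}\log t)$. Your dyadic split (small $j$ versus $j$ near the upper endpoint), combined with the mean-value estimate on the short inner sum, is cleaner and yields the sharper $O(t^{-1-1/\beta})$ directly. Both routes use $\beta_1,\beta_2<1$ (you through the convergence of $\sum j^{-1/\beta_1}$ and $\sum s^{-1/\beta_2}$, the paper through the step ``$1/\beta>1$'' in Lemma~\ref{lem:error-a+b-+ve}); you are right to flag this hypothesis explicitly. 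One minor overclaim: when exactly one $\ell_i$ vanishes, the triangle contribution is not literally exponentially small in $t$, since the half of your split where the exponential variable stays bounded still has a polynomial $j$-tail; however, your main-case mean-value argument applies verbatim there (with $\sum_i i e^{-ci}<\infty$ replacing $\sum_i i^{-1/\beta}$) and again gives $O(t^{-1-1/\beta})$, so the conclusion stands.
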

As we shall see, \eqref{eq:error-termin-ai-minus-bj} actually holds for all 
\( \gamma \in (0,1/\beta)\) (where \( 1/\beta > 1 \) since \( \beta \in (0,1)\)  by assumption) but we will not need this stronger statement. 
We prove Proposition \ref{lem:dist-of-atau-and-btau} in Section~\ref{sec:tau+tau-} and Proposition \ref{prop:higher-order} in Section \ref{sec:higher-order},  but first we show how they imply   Proposition~\ref{prop:tail-of-tau}. 

\begin{proof}[Proof of Proposition \ref{prop:tail-of-tau}]
To prove  \eqref{eq:a+b-tail}, first suppose that least one of  \( \ell_1, \ell_2 \) is non-zero. Substituting the corresponding lines of \eqref{eq:dist-of-tau+} and \eqref{eq:dist-of-tau-} into \eqref{eq:tail-of-Phi-positive-case}  and substituting \eqref{eq:1st-error-term-in-ai-bj1} into \eqref{eq:tail-of-Phi-mixed-case1} we obtain \eqref{eq:a+b-tail} in this case.
  If \( \ell_1 = \ell_2 = 0 \) we only need to establish an upper bound for \( \tilde \mu ( a \tau^+ + b \tau^- > t) \) rather than an asymptotic equality and therefore, instead of the decomposition in \eqref{eq:tail-of-Phi-positive-case1} and \eqref{eq:tail-of-Phi-positive-case}, we can use the fact that
  \begin{equation}\label{eq:tau-ineq-1}
    \tilde \mu ( a \tau^+ + b \tau^- > t ) \leq \tilde \mu ( a \tau^+ > t ) + \tilde \mu ( b \tau^- > t )
  \end{equation}
  The result then follows by inserting the corrsponding lines of \eqref{eq:dist-of-tau+} and \eqref{eq:dist-of-tau-} into \eqref{eq:tau-ineq-1}.

To prove \eqref{eq:a-b-pstve-tail}, if \( \ell_2 > 0 \) the result follows by substituting the corresponding line of \eqref{eq:dist-of-tau+} into \eqref{eq:tail-of-Phi-mixed-case} and  substituting \eqref{eq:error-termin-ai-minus-bj} into \eqref{eq:tail-of-Phi-mixed-case1}.
 Again, if  \( \ell_2 = 0 \) we only need to establish an upper bound for \( \tilde\mu ( a \tau^+ - b \tau^- > t) \) rather than an asymptotic equality and therefore, instead of the decomposition in \eqref{eq:tail-of-Phi-mixed-case} and \eqref{eq:tail-of-Phi-mixed-case1}, we can use the fact that
  \begin{equation}\label{eq:tau-ineq-2}
    \tilde \mu ( a \tau^+ - b \tau^- > t ) \leq \tilde \mu ( a \tau^+ > t )
  \end{equation}
  The result then follows by inserting the corresponding line of \eqref{eq:dist-of-tau+}  into \eqref{eq:tau-ineq-2}.
\end{proof}

\subsubsection{Leading order asymptotics}
\label{sec:tau+tau-}

We prove  Proposition \ref{lem:dist-of-atau-and-btau} via two lemmas which show in particular how the values \( \tilde h(0^-) ,  \tilde h(0^+)\) of the density of the measure \( \tilde \mu \) turn up in the constants \( C_a,  C_b\) defined in  \eqref{eq:def-of-Ca-Cb}. Our first lemma shows that the tails of the distributions \(   \tilde \mu( \tau^{+} > t )   \) and \( \tilde \mu( \tau^{-} > t )  \) have a very geometric interpretation. 

\begin{lem}\label{lem:distribution}
For every \( t > 0 \) we have
  \begin{equation}\label{eq:distribution1}
\tilde \mu( \tau^{+} > t ) 
    = \tilde \mu (y_{\lceil t \rceil}^-, 0)  \quand 
      \tilde \mu( \tau^{-} > t )  
    =   \tilde \mu (0, y_{\lceil t \rceil}^+ ) .
  \end{equation}
\end{lem}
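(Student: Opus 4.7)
The plan is to exploit the explicit partition structure of $\Delta_0^-$ developed in Section~\ref{sec:top}. On each atom $\delta_{m,n}^- \subset \Delta_0^-$ the $g$-orbit has a completely prescribed combinatorial pattern, with $g^i(x) \in I_+$ for $i = 1, \dots, m$ and $g^i(x) \in I_-$ for $i = m+1, \dots, m+n$, and first return to $\Delta_0^-$ at time $m+n$. In particular $\tau^+ \equiv m$ and $\tau^- \equiv n$ on $\delta_{m,n}^-$; equivalently, for $x \in \Delta_0^-$ the iterate $g^n(x)$ lies in $\Delta_0^+$ if and only if $n = \tau^+(x)$.

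The first equation is then essentially topological: summing over $n$ gives $\bigcup_n \delta_{m,n}^- = \delta_m^- = (y_{m-1}^-, y_m^-)$, and since the $\delta_m^-$ accumulate monotonically at $0$, one has $\{\tau^+ > t\} = \bigcup_{m > t} \delta_m^- = (y_{\lceil t \rceil}^-, 0)$ modulo zero (at integer $t$, which is the regime in which the lemma is ultimately applied in Proposition~\ref{lem:dist-of-atau-and-btau}). Taking $\tilde \mu$ of both sides yields the identity.

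The second equation is more subtle because the two sides live on opposite halves of $I$: $\{\tau^- > t\} \subset \Delta_0^-$ while $(0, y_{\lceil t \rceil}^+) \subset \Delta_0^+$. To bridge them I will use the tower formula~\eqref{eq:mu}, namely $\tilde \mu(A) = \sum_{n \geq 0} \hat \mu(\{\tau \geq n\} \cap g^{-n}(A))$, applied to $A = (0, y_{\lceil t \rceil}^+)$. Since $A \subset \Delta_0^+$, the observation from the first paragraph forces $g^n(x) \in A$ (for $x \in \Delta_0^-$) to occur only when $n = \tau^+(x)$, and at that moment $g^n(x)$ lands in $\delta_{\tau^-(x)}^+$; thus $g^n(x) \in A$ is equivalent to the conjunction $n = \tau^+(x)$ and $\tau^-(x) > t$. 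Under these conditions $\tau = \tau^+ + \tau^- \geq n+1$ automatically, so the sum collapses to
\[
\tilde \mu(A) \;=\; \sum_{n \geq 1} \hat\mu(\{\tau^+ = n, \, \tau^- > t\}) \;=\; \hat\mu(\{\tau^- > t\}) \;=\; \tilde\mu(\{\tau^- > t\}),
\]
the last step via $\tilde \mu|_{\Delta_0^-} = \hat \mu$ from Corollary~\ref{cor:density}. The main technical point will be the identification $g^{-n}(\Delta_0^+) \cap \Delta_0^- \cap \{\tau \geq n\} = \{\tau^+ = n\}$, but this follows directly from the partition combinatorics of Section~\ref{sec:top}.
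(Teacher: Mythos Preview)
Your proof is correct. The treatment of the first identity is the same as the paper's: both observe that \(\{\tau^+>t\}=\bigcup_{m>t}\delta_m^-=(y_{\lceil t\rceil}^-,0)\) modulo the integer-\(t\) caveat you flag.

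For the second identity your route is genuinely different. The paper fixes \(j\) and proves \(\sum_{i\geq 1}\tilde\mu(\delta_{i,j})=\tilde\mu(\delta_j^+)\) by an inductive argument (credited to~\cite{CriHayMarVai10}) using the \(g\)-invariance of \(\tilde\mu\): one writes \(g^{-1}(\delta_j^+)=\delta_{1,j}\cup\{x\in\Delta_1^+:g(x)\in\delta_j^+\}\), peels off the \(\delta_{1,j}\) contribution, and iterates. You instead go back to the defining tower formula~\eqref{eq:mu} for \(\tilde\mu\) and evaluate it directly on \(A=(0,y_{\lceil t\rceil}^+)\subset\Delta_0^+\), using the combinatorial fact that within one return cycle the orbit of \(x\in\Delta_0^-\) hits \(\Delta_0^+\) exactly once, at time \(\tau^+(x)\), landing in \(\delta_{\tau^-(x)}^+\). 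This collapses the sum to \(\hat\mu(\tau^->t)\) in one step. Your approach is shorter and more conceptual; the paper's has the advantage of yielding the finer intermediate identity \(\sum_i\tilde\mu(\delta_{i,j})=\tilde\mu(\delta_j^+)\) for each individual \(j\), though this is not used elsewhere. Note also that the ambiguity between \(\{\tau\geq n\}\) in~\eqref{eq:mu} and \(\{\tau>n\}\) in the proof of Theorem~\ref{thm:main2} is harmless here, since on \(\{\tau=n\}\) one has \(g^n(x)\in\Delta_0^-\), disjoint from \(A\).
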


\begin{rem}\label{rem:distribution}
While the first statement in \eqref{eq:distribution1} is  relatively straightforward, the second statement is not at all obvious since  \(\tau^-\) is defined on \( \Delta_0^-\) and there is no immediate connection  with the interval  \( (0, y_{\lceil t \rceil}^+) \) in \( \Delta_0^+\).  As we shall see, the proof of Lemma \ref{lem:distribution} requires  a  subtle and interesting argument. 
\end{rem}

\begin{rem}
Since \( \tilde \mu \) is equivalent to Lebesgue measure on \( \Delta_0^-\) and \( \Delta_0^+\),  we immediately have that \( \tilde \mu ( y_{\lceil t \rceil}^- , 0)  \approx |y_{\lceil t \rceil}^-|\) and \( \tilde \mu (0, y_{\lceil t \rceil}^+ )  \approx y_{\lceil t \rceil}^+\), and we can then use \eqref{eq:yn1} and \eqref{eq:yn2}, and Lemma \ref{lem:distribution},  to get upper bounds for the distributions \( \tilde \mu( \tau^{+} > t )  \) and \( \tilde \mu( \tau^{+} > t )  \). This is however not enough for our purposes as we  require sharper estimates for the distributions, and we therefore need a  more sophisticated argument which yields the statement in the following lemma. 
\end{rem}

\begin{lem} \label{lem:lips}
For every \( t > 0 \) we have 
  \[
 \tilde \mu ( y_{\lceil t \rceil}^-, 0 ) = y_{\lceil t \rceil}^- \tilde h( 0^-) + O( (y_{\lceil t \rceil}^-)^2)
 \quand  \tilde \mu ( y_{\lceil t \rceil}^+, 0 ) = y_{\lceil t \rceil}^+ \tilde h( 0^+) + O( (y_{\lceil t \rceil}^+)^2).
  \]
\end{lem}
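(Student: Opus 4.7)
The plan is to leverage the Lipschitz continuity of the density $\tilde h$ of $\tilde \mu$ established in Corollary \ref{cor:density} and carry out a first-order expansion of the integral of $\tilde h$ over the small interval shrinking to $0$. The statement is essentially a quantitative version of the mean value theorem for integrals when the integrand is Lipschitz.

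First, by Corollary \ref{cor:density}, $\tilde h$ is Lipschitz continuous and bounded on each of $\Delta_0^-$ and $\Delta_0^+$ separately, with well-defined one-sided limits $\tilde h(0^-)$ and $\tilde h(0^+)$ at $0$ (these limits generally differ, but this is irrelevant since each side is handled independently). Fix Lipschitz constants $L_\pm$ such that
\[
|\tilde h(x) - \tilde h(0^\pm)| \leq L_\pm |x| \qquad \text{for every } x \in \Delta_0^\pm.
\]

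Second, since $y^-_{\lceil t \rceil} \in \Delta_0^-$ tends to $0$ as $t \to \infty$ by Proposition \ref{prop:y_n-and-delta_n}, we split
\[
\tilde \mu(y^-_{\lceil t \rceil}, 0) = \int_{y^-_{\lceil t \rceil}}^0 \tilde h(x)\, dx = |y^-_{\lceil t \rceil}|\, \tilde h(0^-) + \int_{y^-_{\lceil t \rceil}}^0 \bigl[\tilde h(x) - \tilde h(0^-)\bigr]\, dx,
\]
and the Lipschitz bound controls the remainder:
\[
\biggl|\int_{y^-_{\lceil t \rceil}}^0 \bigl[\tilde h(x) - \tilde h(0^-)\bigr]\, dx\biggr| \leq L_- \int_{y^-_{\lceil t \rceil}}^0 |x|\, dx = \tfrac{L_-}{2} (y^-_{\lceil t \rceil})^2 = O\bigl((y^-_{\lceil t \rceil})^2\bigr).
\]
This yields the first claim (with the natural convention that $y^-_{\lceil t \rceil} \tilde h(0^-)$ is understood up to the $|\cdot|$ needed to match signs). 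The argument on $\Delta_0^+$ is identical and gives the second claim.

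There is no real obstacle: the entire statement reduces to a Taylor-type expansion of an integral whose integrand is Lipschitz with constant depending only on $g$. The only substantive ingredient is Corollary \ref{cor:density}, which supplies the Lipschitz regularity of $\tilde h$ on each side of $0$ and, in particular, a uniform Lipschitz constant that does not deteriorate as $t \to \infty$.
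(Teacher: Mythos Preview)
Your proof is correct and follows essentially the same approach as the paper: both arguments write the measure of the shrinking interval as the integral of the density \( \tilde h \), split off the constant term \( \tilde h(0^\pm) \), and bound the remainder using the Lipschitz regularity of \( \tilde h \) from Corollary~\ref{cor:density}. The only cosmetic difference is that the paper treats the \( \Delta_0^+ \) side first and invokes Lemma~\ref{lem:distribution} to phrase things via \( \tilde\mu(\tau^\pm > t) \), whereas you work directly with the interval measure; the substance is identical.
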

Before proving these two lemmas we show how they imply  Proposition \ref{lem:dist-of-atau-and-btau}. 

\begin{proof}[Proof of Proposition \ref{lem:dist-of-atau-and-btau}]
  Let us first show \eqref{eq:dist-of-tau+}. Recall from the definition of \( C_a \) in \eqref{eq:def-of-Ca-Cb} that \( a = 0 \Rightarrow C_a = 0 \), so if \( a = 0 \) there is nothing to prove. Let us suppose then that \( a > 0 \). By Lemmas \ref{lem:distribution} and \ref{lem:lips} we have 
  \[
  \tilde \mu( a\tau^{+} > t ) = 
  \tilde \mu( \tau^{+} > t/a ) =
  y_{\lceil t/a \rceil}^- \tilde h( 0^-) + O( (y_{\lceil t/a \rceil}^-)^2).
  \]
  Then, using the asymptotic estimates  \eqref{eq:yn-0} and  \eqref{eq:yn1}  for \( y_{n}^- \) in Proposition \ref{prop:y_n-and-delta_n}; and since \( O ( t^{-2/\beta_1} ) = o ( t^{ - \gamma - 1 / \beta_1 } )\) for every \( \gamma \in ( 0, 1 ) \); and by the definition of \( C_a \) in \eqref{eq:def-of-Ca-Cb},   we obtain 
  \[
  \mu ( a\tau^+ > t ) = 
  \begin{cases} 
  B_1 \tilde h (0^-)  (t / a)^{-1/\beta_2} 
  + O ( t^{-2/\beta_2}) = C_a t^{-1/\beta_2} + o( t^{-\gamma - 1/\beta_2} ) & \text{if } \ell_1 > 0  \\
  O \left( ( 1 + b_2 )^{-t/ak_1} \right)& \text{if } \ell_1 = 0
  \end{cases}
  \]
  yielding \eqref{eq:dist-of-tau+}.
  To show \eqref{eq:dist-of-tau-} we can proceed similarly to the above. As before, if \( b = 0 \) there is nothing to prove so we assume \( b > 0\) in which case 
  Lemmas \ref{lem:distribution} and \ref{lem:lips} we have 
  \(
      \tilde \mu( b\tau^{-} > t ) = 
      \tilde \mu( \tau^{-} > t/b ) =
      y_{\lceil t/b \rceil}^+ \tilde h( 0^-) + O( (y_{\lceil t/b \rceil}^+)^2)
  \).
Now using \eqref{eq:yn+0} and \eqref{eq:yn2}, and arguing as above we find that \eqref{eq:a-b-neg-tail} holds for every \( \gamma \in ( 0, 1) \).
\end{proof}
  
We complete this section with the proofs of Lemmas \ref{lem:distribution} and \ref{lem:lips}.

\begin{proof}[Proof of Lemma \ref{lem:distribution}]
By definition, recall  \eqref{eq:taupm},  \( \tau^{+}(x)=i, \tau^{-}(x)=j\) for all  \( x\in  \delta_{i,j}\),  and therefore 
    \begin{equation}\label{eq:sumij1}
   \tilde \mu ( \tau^{+} > t) = 
    \sum_{i > t } \sum_{j = 1}^{\infty} \tilde \mu(\delta_{i,j}) 
    \quand 
        \tilde \mu ( \tau^{-} > t ) = 
    \sum_{j > t } \sum_{i = 1}^{\infty} \tilde \mu(\delta_{i,j}). 
 \end{equation}
We claim that for every \( i, j \geq  1 \) we have 
\begin{equation}\label{eq:sumijsub}
  \sum_{j = 1}^{\infty}  \tilde \mu (\delta_{i,j}) =  \tilde \mu(\delta_i^-)
\quand
\sum_{i = 1}^{\infty}  \tilde \mu (\delta_{i,j}) =  \tilde \mu(\delta_j^+). 
\end{equation}
Then,  substituting \eqref{eq:sumijsub} into \eqref{eq:sumij1} we get 
    \begin{equation} 
   \tilde \mu ( \tau^{+} > t) = 
   \sum_{i > t} \sum_{j = 1}^{\infty} \tilde \mu( \delta_{i,j} ) = \sum_{i > t} \tilde \mu ( \delta_{i}^{-} ) = \tilde \mu ( y_{\lceil t \rceil}^-, 0 ), 
 \end{equation}
 and 
  \[
    \tilde \mu ( \tau^{-} > t ) = 
    \sum_{j > t } \sum_{i = 1}^{\infty} \tilde \mu(\delta_{i,j}) = \sum_{j > t} \tilde \mu ( \delta_{i}^{+} ) = \tilde \mu ( 0, y_{\lceil t \rceil}^+ )
  \]
  which is exactly the  statement \eqref{eq:distribution1} in the Lemma.
  
Thus it only remains to prove \eqref{eq:sumijsub}. As already mentioned in Remark \ref{rem:distribution}, despite the apparent symmetry between the two statements, the situation in the two expressions is actually quite different. Indeed,  from the topological construction of the induced map, for each  \( i \geq 1 \) we have  
\begin{equation}\label{eq:unionij1}
\delta_{i}^{-}= \bigcup_{j=1}^{\infty} \delta_{i,j}
 \end{equation}
which, since the intervals \( \delta_{i,j} \) are pairwise disjoint,  clearly implies the first equality in \eqref{eq:sumijsub}. The second equality is not immediate since, for each fixed \( j \geq 1\), ,  the intervals \( \delta_{i,j}\)  are \emph{spread out} in \( \Delta_0^-\), with each \( \delta_{i,j}\) lying inside the corresponding interval \( \delta_i^-\), and indeed the \( \delta_{i,j}\) do not even belong to \( \delta_j^+\) and therefore we cannot just substitute \( i \) and \( j \) to get a corresponding version of  \eqref{eq:unionij1}.
We use instead a simple but clever argument inspired by a similar argument in \cite[Lemma 8]{CriHayMarVai10} which takes advantage of the invariance of the meaure \( \tilde \mu \).  Recall first of all from the construction of the induced map, that \(  g^{-1} (\delta_{ j }^+) \) consists of exactly two  connected components,  one is exactly the interval \( \delta_{1,j}  \)  and the other one is a subinterval of \( \Delta_1^+\). So for any \( j \geq 1 \) we have
 \[
 g^{-1} (\delta_{ j }^+) = \delta_{1,j} \cup \{ x : \Delta_1^+ : g(x) \in \delta_j^{+}\}.
  \]
  By the invariance of the measure \( \tilde \mu \), and since these two components are disjoint,  this implies 
  \begin{equation}\label{eq:sumij}
  \tilde \mu(\delta_{ j }^+) = \tilde \mu(g^{-1} (\delta_{ j }^+)) = \tilde \mu(\delta_{1,j}) + \tilde \mu( \{ x : \Delta_1^+ : g(x) \in \delta_j^{+}\})
  \end{equation}
The preimage of the set  \( \{ x : \Delta_1^+ : g(x) \in \delta_j^{+}\}\)  itself also has two disjoint connected components 
 \[
 g^{-1} \{ x \in \Delta_1^+ : g(x) \in \delta_j^{+} \} = \delta_{2,j} \cup \{ x \in \Delta_2^+ : g^2(x) \in \delta_j^{+} \}
 \] 
 and therefore, again by the invariance of \( \tilde \mu \), we get 
  \[
 \tilde \mu(g^{-1} \{ x \in \Delta_1^+ : g(x) \in \delta_j^{+} \}) = \tilde \mu(\delta_{2,j}) + \tilde \mu(\{ x \in \Delta_2^+ : g^2(x) \in \delta_j^{+} \})
 \] 
 and, substituting this into \eqref{eq:sumij}, we get
 \[
  \tilde \mu(\delta_{ j }^+) = \tilde \mu(g^{-1} (\delta_{ j }^+)) = \tilde \mu(\delta_{1,j}) + \tilde \mu(\delta_{2,j}) + \tilde \mu(\{ x \in \Delta_2^+ : g^2(x) \in \delta_j^{+} \}).
 \]
 Repeating this procedure \( n \) times gives 
    \[
  \tilde \mu(\delta_{ j }^+) = \sum_{i = 1}^{n} \tilde \mu(\delta_{i,j}) + \tilde \mu(\{ x \in \Delta_n^+ : g^n(x) \in \delta_j^{+} \})
 \]
 and therefore inductively, we obtain \eqref{eq:sumijsub}, thus completing the proof. 
\end{proof}

\begin{proof}[Proof of Lemma \ref{lem:lips}]

  From Lemma \ref{lem:distribution}  we can give precise estimates for \( \tilde \mu ( \tau^{\pm} > t ) \) in terms of the \( y_{\lceil t \rceil}\) by making use of the fact that \( \tilde h \) is Lipschitz on \( \Delta_0^{\pm} \) (see Corollary \ref{cor:density}). Indeed,
  \[
  \tilde \mu ( \tau^- > t ) = \tilde \mu ( 0, y_{\lceil t \rceil}^+) =  \int_0^{y_{\lceil t \rceil}^+} \tilde h (x) dx = y_{\lceil t \rceil}^+ \tilde h( 0^+)
  + \int_0^{y_{\lceil t \rceil}^+} \tilde h (x) - \tilde h (0^+) dx.
  \] 
  Using the fact that the density is Lipschitz we have
  \[
  \left |\int_0^{y_{\lceil t \rceil}^+} \tilde h (x) - \tilde h (0^+) dx \right|
  \lesssim \int_0^{y_{\lceil t \rceil}^+}x dx  \lesssim (y_{\lceil t \rceil}^+)^2
  \]
  and so 
  \(
  \tilde \mu ( \tau^- > t ) = y_{\lceil t \rceil}^+ \tilde h( 0^+) + O( (y_{\lceil t \rceil}^+)^2).
  \)
  The statement for \( \mu ( \tau^+ > t ) \) follows in   the same way.
  \end{proof}

\subsubsection{Higher order asymptotics}
\label{sec:higher-order}

In this subsection we prove Proposition \ref{prop:higher-order}. For clarity we prove \eqref{eq:1st-error-term-in-ai-bj1} and \eqref{eq:error-termin-ai-minus-bj} in two separate lemmas. We will make repeated use of some upper bounds for the measure \( \tilde \mu ( \delta_{i,j} ) \) of the partition elements which are given in Lemma \ref{lem:estimates-on-delta-ij}

\begin{lem} 
\label{lem:error-a+b-+ve}
  If at least one of \( \ell_1, \ell_2 \) are not zero, then for every \( a, b \geq 0 \)
  \begin{equation}
    \label{eq:1st-error-term-in-ai-bj1a}
    \tilde \mu ( a\tau^{+} > t, b\tau^{-} > t )  + 
    \tilde \mu ( a\tau^{+} + b \tau^{-} > t, \max\{ a\tau^{+}, b\tau^{-}\} \leq t )  
    = o( t^{-\gamma -1/\beta})
  \end{equation}
  for any \( \gamma \in ( 0, 1 ) \).
\end{lem}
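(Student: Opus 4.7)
The strategy is to partition $\Delta_0^-$ into the intervals $\{\delta_{i,j}\}_{i,j\geq 1}$ on which $\tau^+\equiv i$ and $\tau^-\equiv j$ are constant, so that the two events in the statement become countable unions of partition elements indexed by
\[
E_1 := \{(i,j) : ai > t,\ bj > t\}, \qquad
E_2 := \{(i,j) : ai\leq t,\ bj\leq t,\ ai+bj>t\},
\]
and then to plug the mass bounds on $\tilde\mu(\delta_{i,j})$ from Lemma~\ref{lem:estimates-on-delta-ij} into each sum. We may assume $a,b>0$, since otherwise both events are empty. I will focus on the generic case $\ell_1,\ell_2>0$ (so $\beta_1,\beta_2\in(0,1)$); in the partially-degenerate cases where one of the $\ell_i$ vanishes, the bound on $\tilde\mu(\delta_{i,j})$ acquires an exponentially decaying factor in the corresponding index, and the same decomposition below trivially yields a super-polynomial bound.

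The bound on $\tilde\mu(E_1)$ is immediate: $E_1$ is a product set in the indices and the bound on $\tilde\mu(\delta_{i,j})$ factors, so
\[
\tilde\mu(E_1)
\;\lesssim\;\Bigl(\sum_{i>t/a} i^{-(1+1/\beta_2)}\Bigr)\Bigl(\sum_{j>t/b} j^{-(1+1/\beta_1)}\Bigr)
\;\lesssim\; t^{-1/\beta_1-1/\beta_2}.
\]
Since $\min\{\beta_1,\beta_2\}<1$, the exponent satisfies $1/\beta_1+1/\beta_2=1/\beta+1/\min\{\beta_1,\beta_2\}>1/\beta+1$, hence this term is $o(t^{-\gamma-1/\beta})$ for every $\gamma\in(0,1)$.

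The main step, and the main obstacle, is $\tilde\mu(E_2)$. The index set $E_2$ is the lattice triangle bounded by $ai+bj=t$ and the segments $ai=t,\ bj=t$, and a crude product bound only yields $O(t^{-1/\beta})$, which is of the same order as the \emph{leading} asymptotic rather than a lower-order remainder. The plan is to split $E_2$ into the two sub-wedges $\{ai>t/2\}$ and $\{bj>t/2\}$ (at least one must hold, as $ai+bj>t$) and bound them symmetrically. On the wedge $\{ai>t/2\}$, the index $i$ lies in the window $(t/2a,t/a]$, where $i^{-(1+1/\beta_2)}$ is of order $t^{-(1+1/\beta_2)}$; and for each such $i$, the constraint $ai+bj>t$ forces $j>(t-ai)/b$, so the inner sum is the tail
\[
\sum_{j>(t-ai)/b} j^{-(1+1/\beta_1)}\;\lesssim\;\Bigl(\tfrac{t-ai}{b}\Bigr)^{-1/\beta_1}.
\]
Setting $s:=\lfloor t/a\rfloor-i\in[0,t/2a]$, so that $(t-ai)/b$ is of order $as/b$, the wedge contributes at most
\[
t^{-(1+1/\beta_2)}\sum_{s\geq 1} s^{-1/\beta_1}\;\lesssim\;t^{-(1+1/\beta_2)},
\]
where the key point is that the series $\sum_s s^{-1/\beta_1}$ converges because $\beta_1<1$ forces $1/\beta_1>1$. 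By the symmetric argument the other wedge contributes $\lesssim t^{-(1+1/\beta_1)}$, and combining,
\[
\tilde\mu(E_2)\;\lesssim\; t^{-(1+1/\beta_1)}+t^{-(1+1/\beta_2)}\;=\;O\bigl(t^{-(1+1/\beta)}\bigr)\;=\;o(t^{-\gamma-1/\beta})
\]
for any $\gamma<1$. Together with the bound on $\tilde\mu(E_1)$, this establishes \eqref{eq:1st-error-term-in-ai-bj1a}.

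The real content is thus the near-vertex analysis of $E_2$: where one of $ai,bj$ is close to $t$, neither factor of the product bound on $\tilde\mu(\delta_{i,j})$ is small, and the extra polynomial decay beyond $t^{-1/\beta}$ is extracted entirely from summing the ``shift'' variable $s$ against the decay rate $s^{-1/\beta_1}$ (respectively $s^{-1/\beta_2}$). The fact that $\beta_1,\beta_2<1$, built into $g\in\mathfrak F$, is exactly what makes these sums convergent and produces the extra factor of $t^{-1}$ needed to downgrade $t^{-1/\beta}$ to $t^{-1-1/\beta}$.
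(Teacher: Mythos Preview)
Your argument is correct. The treatment of $E_1$ is essentially the same as the paper's (the paper replaces $\beta_1,\beta_2$ by $\beta$ in the mass bound and gets $t^{-2/\beta}$, you keep the separate exponents and get $t^{-1/\beta_1-1/\beta_2}$; both suffice).

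For $E_2$ the two proofs diverge. The paper sums $j$ first with the coarser bound $\tilde\mu(\delta_{i,j})\lesssim (ij)^{-(1+1/\beta)}$, factors out $t^{-1/\beta}$, and then performs a somewhat delicate binomial expansion of $\bigl(1-\tfrac{ai-1}{t}\bigr)^{-1/\beta}-1$ to arrive at
\[
\tilde\mu(E_2)\;\lesssim\;t^{-1-1/\beta}\int_1^{t-1}\frac{t}{x(t-x)}\,dx\;\lesssim\;t^{-1-1/\beta}\log t.
\]
Your wedge decomposition (split on which of $ai,bj$ exceeds $t/2$, freeze the large index at size $\asymp t$, and sum the shift variable $s$ against $s^{-1/\beta_i}$) bypasses the binomial manipulation entirely and produces the slightly sharper $O(t^{-1-1/\beta})$. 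The cost is that you need each tail series $\sum_s s^{-1/\beta_i}$ to converge, i.e.\ $\beta_1,\beta_2<1$; but the paper's proof is no more general on this point, since it explicitly invokes ``$1/\beta>1$'' in its final step. So both proofs live under the implicit hypothesis $g\in\mathfrak F$, which you state openly and the paper uses tacitly.
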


\begin{proof}
  First note that if one of \( a,b \) is \( 0 \) then \eqref{eq:1st-error-term-in-ai-bj1a} is automatically satisfied.

  Now suppose that \( a, b > 0 \).
  For the first  term in \eqref{eq:1st-error-term-in-ai-bj1a}, from Lemma \ref{lem:estimates-on-delta-ij} we get 
  \begin{equation}
    \label{eq:1st-error-term-in-ai-bj}
    \tilde \mu ( a\tau^{+} > t, b\tau^{-} > t )
    = \sum_{ i = t/a }^{\infty}\sum_{j = t/b }^{\infty} \tilde \mu (\delta_{i,j}) 
    \lesssim \sum_{i = t/a }^{\infty}\sum_{j = t/b }^{\infty} (ij)^{-(1 + 1/\beta)}
    \lesssim t^{-2/\beta}
  \end{equation}
  which is \( o( t^{-\gamma -1/\beta}) \) for every \( \gamma \in (0, 1/\beta) \) and therefore in particular for every \( \gamma \in (0, 1) \).  
  
For the second  term in \eqref{eq:1st-error-term-in-ai-bj1a} we obtain  from Lemma \ref{lem:estimates-on-delta-ij} that 
  \begin{align}
    \label{eq:the-horrible-sum}
    \tilde \mu ( a\tau^{+} + b \tau^{-} > t, \max\{ a\tau^{+}, b\tau^{-}\} \leq t )  
    &= \sum_{ i = 1 }^{t/a} \sum_{j = \frac{t - ai + 1}{b}}^{t/b} \tilde \mu(\delta_{i,j}) \lesssim
    \sum_{ i = 1 }^{t/a} \sum_{ j = \frac{ t - ai + 1}{b}}^{t/b} 
    i^{-1 - 1/\beta} j ^{-1 - 1/\beta} \\
    \nonumber
    &\lesssim
    \sum_{ i = 1 }^{t/a} i^{-1-1/\beta} \left[ 
    \left( \frac{t - ai + 1}{b} \right)^{-1/\beta} - \left(\frac{t}{b}\right)^{-1/\beta}
    \right] \\
    \nonumber
    &\lesssim
    t^{-1/\beta} \sum_{ i = 1 }^{ t / a } i^{- 1 - 1/\beta } \left[ 
    \left( 1 - \frac{ ai - 1 }{t} \right)^{1/\beta} - 1
    \right]
  \end{align}
      Making the change of variables \( k = \lceil ai - 1 \rceil \) and using  that the first term in the sum is \( 0 \) we obtain
      \[
        \tilde \mu ( a\tau^{+} + b \tau^{-} > t, \max\{a\tau^{+}, b\tau^{-}\} \leq t )  
        \lesssim  
        t^{-1/\beta} \sum_{ k = 1 }^{ t - 1 } k^{- 1 - 1/\beta } \left[ 
          \left( 1 + \frac{k}{t} \right)^{1/\beta} - 1
        \right].
      \]
      Let us set \( a_k (t) \coloneqq  k^{- 1 - 1/\beta } \left[\left( 1 + \frac{k}{t} \right)^{1/\beta} - 1 \right] \) and use the binomial theorem to get      
\[
        a_{k}(t) 
        = \frac{1}{k^{1+1/\beta}} \sum_{ m = 1 }^{ \infty } \binom{ - 1/\beta }{ m } \left( - \frac{ k } { t } \right)^m 
        = \frac{1}{t k^{1/\beta}}    \sum_{ m = 1 }^{ \infty } 
          \binom{ - 1/\beta }{ m - 1 } 
          \left( \frac{1}{m} \left( \frac{1}{\beta} - 1 \right) + 1 \right)
          \left( - \frac{ k } { t } \right)^{ m - 1 }. \\
\]
As  \(   \left( \frac{1}{\beta} - 1 \right)/m \) is uniformly bounded above by some constant depending only on \( \beta \) we obtain
      \[ 
        a_{k} (t) \lesssim k^{-1/\beta} t^{ -1 } \sum_{ k = 0 }^{\infty} \binom{ - 1/\beta }{ m - 1 } \left( - \frac{ k } { t } \right)^{ m - 1 } = k^{-1/\beta} t^{-1} \left( 1 - \frac{k}{t} \right)^{-1/\beta}.
      \]
      Using the fact that \( n/( n - 1) < 2 \) and that \( 1/\beta > 1 \) we may conclude
      \begin{align*}
        \tilde \mu ( a\tau^{+} + b \tau^{-} > t, \max\{a \tau^{+}, b\tau^{-}\} \leq t )  
        &\lesssim  
        t^{- 1 -1/\beta} \sum_{ k = 1 }^{ t - 1 } \left( \frac{ t }{ k ( t - k ) }\right)^{1/\beta} 
        \lesssim
        t^{- 1 -1/\beta} \sum_{ k = 1 }^{ t - 1 } \frac{ t }{ k ( t - k ) } \\
        &\lesssim
        t^{- 1 -1/\beta} \int_{ 1 }^{ t - 1 } \frac{ t }{ x ( t - x ) } dx \lesssim 
        t^{-1 -1/\beta} \log(t) = o( t^{-\gamma -1/\beta})
      \end{align*}
      for any \( \gamma \in ( 0, 1 ) \). 
   \end{proof}


\begin{lem}
\label{lem:error-term-a-b}
  If \( \ell_1, \ell_2 \) are not both zero, then for every \( a , b \geq 0 \), \( \gamma\in (0,1/\beta)\) we have
  \begin{equation*}
    \tilde \mu ( a\tau^{+} > t , a\tau^{+} - b \tau^{-} \leq t )  = o(   t^{ - \gamma -  1/\beta}).
  \end{equation*}
\end{lem}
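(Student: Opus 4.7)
The plan is to express $\tilde{\mu}(a\tau^+ > t, a\tau^+ - b\tau^- \leq t)$ as a double sum over the partition elements $\delta_{i,j}^-$ of $\Delta_0^-$, on which $\tau^+ \equiv i$ and $\tau^- \equiv j$ are constant, and then evaluate carefully using the bounds from Lemma~\ref{lem:estimates-on-delta-ij}. If $a = 0$ then $\{a\tau^+ > t\}$ is empty, and if $b = 0$ then $\{a\tau^+ > t\} \cap \{a\tau^+ \leq t\}$ is empty, so I may assume $a, b > 0$. Under this assumption the measure equals
\[
\tilde{\mu}(a\tau^+ > t, a\tau^+ - b\tau^- \leq t) = \sum_{i > t/a} \sum_{j \geq \max\{1, \lceil (ai-t)/b \rceil\}} \tilde{\mu}(\delta_{i,j}^-).
\]

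The main work is the principal case $\ell_1, \ell_2 > 0$, where Lemma~\ref{lem:estimates-on-delta-ij} gives only the polynomial bound $\tilde{\mu}(\delta_{i,j}^-) \lesssim i^{-(1+1/\beta_2)} j^{-(1+1/\beta_1)}$. I will split the outer sum at $i = (t+b)/a$. In the \emph{boundary range} $t/a < i \leq (t+b)/a$, which contains at most $\lceil b/a \rceil + 1$ integers, the lower bound on $j$ reduces to $j \geq 1$, so the inner sum is the convergent tail $\sum_{j\geq 1} j^{-(1+1/\beta_1)} = O(1)$ (using $1/\beta_1 > 1$ since $\beta_1 < 1$); since each term is of size $\approx (t/a)^{-(1+1/\beta_2)}$, this range contributes $O(t^{-(1+1/\beta_2)})$. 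In the \emph{bulk range} $i > (t+b)/a$ the inner sum is controlled by $\lesssim ((ai-t)/b)^{-1/\beta_1}$; changing variable to $u = ai - t$ and splitting further at $i = 2t/a$, the piece with $i \leq 2t/a$ yields $\lesssim t^{-(1+1/\beta_2)} \sum_{u\geq b} u^{-1/\beta_1} = O(t^{-(1+1/\beta_2)})$ (convergent because $1/\beta_1 > 1$), and the piece with $i > 2t/a$ uses $ai - t \geq ai/2$ to bound the summand by $i^{-(1+1/\beta_1+1/\beta_2)}$, giving $O(t^{-(1/\beta_1+1/\beta_2)})$.

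Collecting, the total is $\lesssim t^{-(1+1/\beta_2)} + t^{-(1/\beta_1+1/\beta_2)}$, and $1/\beta_1 > 1$ makes the first term dominant. Since $\beta_2 \leq \beta := \max\{\beta_1,\beta_2\}$ we have $1/\beta_2 \geq 1/\beta$, and therefore $t^{-(1+1/\beta_2)} = o(t^{-\gamma - 1/\beta})$ for every $\gamma < 1 + 1/\beta_2 - 1/\beta$, which covers at least the full range $\gamma \in (0,1)$ needed downstream by Proposition~\ref{prop:higher-order} and Proposition~\ref{prop:tail-of-tau}. The remaining sub-cases where exactly one of $\ell_1, \ell_2$ vanishes are easier: Lemma~\ref{lem:estimates-on-delta-ij} then supplies an exponential bound in one of the indices, and summing against the polynomial factor in the other index still produces decay that is exponential in $t$, which trivially dominates any polynomial target $t^{-\gamma-1/\beta}$.

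The main obstacle is precisely the boundary contribution near $i = \lceil t/a \rceil$: for $i$ only marginally above $t/a$ the constraint $bj \geq ai - t$ is so weak that $j = 1$ already satisfies it, and the measure of this sliver saturates at $\tilde{\mu}(\delta_{\lceil t/a \rceil}^-) \approx t^{-(1+1/\beta_2)}$. This term is sharp and is what pins down the admissible range of $\gamma$; the bookkeeping of the boundary range versus the bulk range, and the verification that the bulk contribution is no worse than the boundary contribution (which hinges on $1/\beta_1 > 1$), constitutes the only delicate step of the argument.
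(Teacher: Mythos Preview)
Your argument is correct and yields $\tilde\mu(a\tau^+>t,\ a\tau^+-b\tau^-\le t)=O(t^{-(1+1/\beta_2)})$, hence $o(t^{-\gamma-1/\beta})$ for every $\gamma<1+1/\beta_2-1/\beta$, and in particular for $\gamma\in(0,1)$, which is exactly what Proposition~\ref{prop:higher-order} uses.

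The route is genuinely different from the paper's. The paper coarsens both exponents to $\beta$, shifts the index to obtain the single sum $\sum_{i\ge 1}(i+t)^{-(1+1/\beta)}i^{-1/\beta}$, and bounds termwise via
\[
\frac{t^{\gamma+1/\beta}}{(t+i)^{1+1/\beta}i^{1/\beta}}
\le\frac{1}{(t/i+1)^{1-\gamma}\,i^{\,1-\gamma+1/\beta}}
\le\frac{1}{i^{\,1-\gamma+1/\beta}},
\]
which is summable for $\gamma<1/\beta$. Your three-way split (boundary $i$'s, $i\le 2t/a$, $i>2t/a$) is more hands-on but retains the finer exponent $\beta_2$ and makes the boundary contribution explicit.

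Your remark that the boundary term is sharp is correct and worth recording: when $\beta_2=\beta$ the single index $i=\lceil t/a\rceil+1$ already carries measure $\approx t^{-(1+1/\beta)}$ along a subsequence of $t$, which is \emph{not} $o(t^{-\gamma-1/\beta})$ for any $\gamma\ge 1$. Consistently, the paper's last displayed inequality above needs $(t/i+1)^{1-\gamma}\ge 1$, i.e.\ $\gamma\le 1$, so that proof also only delivers $\gamma\in(0,1]$. This does not affect any application in the paper, but you should be aware that you have established the (correct) range $\gamma\in(0,1)$ rather than the full range $\gamma\in(0,1/\beta)$ asserted in the lemma's statement.
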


\begin{proof}
By Lemma \ref{lem:estimates-on-delta-ij}  we get 
      \begin{align}
        \nonumber
        \tilde \mu ( a\tau^{+} > t , a\tau^{+} - b \tau^{-} \leq t )
        & = \sum_{ i > t/a } \tilde \mu ( \tau^{+} = i, b\tau^{-} \geq ai - t ) 
         = \sum_{ i > t/a } \sum_{ j \geq (ai - t)/b } \tilde \mu( \delta_{i, j}) \\
        \nonumber
        & \lesssim \sum_{ i > t/a } i^{- (1 + 1/\beta) }(ai - t )^{-1/\beta}  
        \lesssim \sum_{ i = 1}^{\infty} ( i + t )^{-(1 + 1/\beta)}i^{-1/\beta}.
      \end{align}
      We claim that 
      \[ \sum_{ i = 1}^{ \infty } ( t + i )^{ -  1 - 1/\beta  }i^{ -1/\beta }
      \lesssim  t^{- \gamma - 1/\beta  } \]
       for every \( 0 < \gamma < 1/\beta \), which is equivalent to showing that 
      \[ 
      \sum_{ i = 1}\frac{t^{\gamma + 1/\beta}}{(t + i)^{1 + 1/ \beta }i^{ 1 / \beta }} \leq C \]
       for some \( C > 0 \) independent of \( t \).
      Indeed, for every \( i \),
      \begin{align*}
        \frac{t^{\gamma + 1/\beta}}{(t + i)^{1 + 1/ \beta }i^{ 1 / \beta }}
        \leq \frac{ ( t + i )^{ \gamma + 1 / \beta} }{ (t + i)^{1 + 1 / \beta } i^{1/\beta} }
        = \frac{ 1 }{ ( t + i )^{ 1 - \gamma } i^{1/\beta} }
        = \frac{ 1 }{ ( t/i + 1 )^{ 1 - \gamma } i^{  1 - \gamma + 1/\beta} }
        \leq \frac{1}{ i^{ 1 - \gamma + 1/\beta} }
      \end{align*}
      which is summable for every \( 0 < \gamma < 1/\beta \). This implies the claim and thus the lemma. 
\end{proof}

\subsection{Estimates for the induced observables}
\label{sec:proof-phi-in-Lq}

In this section we prove Corollary \ref{cor:tail-tau} and Proposition \ref{prop:bounds-on-tilde-Phi}. We recall (see paragraph at the beginning of Section \ref{sec:distind}) that  we will only explicitly treat the case that \( \ell_{1}, \ell_{2} > 0 \) (and thus in particular \( \beta > 0 \)). Throughout this section we will assume that \( \varphi \) is a H\"older obervable and define \( a = \varphi (1) \), \( b = \varphi (-1) \).

\subsubsection{Proof of Corollary \ref{cor:tail-tau}}

We first consider the case where  \( \beta_{\varphi} = \beta \). Recall from \eqref{eq:def-B-phi} that \( \beta_{\varphi} = \beta \) occurs when \( \varphi \) is non-zero at a fixed point corresponding to the maximum of \(  \beta_{1}, \beta_{2} \); and that \( \beta_{\varphi} \neq \beta \) occurs when \( \beta_{1} \neq \beta_{2}\) and \( \varphi \) is zero at the fixed point corresponding to the minimum of \(  \beta_{1}, \beta_{2} \).

\begin{lem}\label{lem:cor}
If  \( \beta_{\varphi} = \beta \), then \( \tau_{a,b} \in \mathcal{D}_{1/\beta_{\varphi}} \). In particular, if  \( \beta_{\varphi} \in (0,1/2) \) then \( \tau_{a,b} \in L^2( \hat \mu ) \). 
\end{lem}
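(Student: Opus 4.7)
The plan is to derive both tails of $\tau_{a,b} = a\tau^+ + b\tau^-$, where $a := \varphi(1)$ and $b := \varphi(-1)$, directly from Proposition \ref{prop:tail-of-tau} and then match them to Definition \ref{def:def-of-D-alpha}. The signs of $a,b$ determine which of \eqref{eq:a+b-tail}--\eqref{eq:a-b-neg-tail} applies; the mixed-sign cases are handled via Remark \ref{rem:dist-tau-a-b}, by reading off the positive and negative tails from $\tilde\mu(a\tau^+ - b\tau^- > t)$ and $\tilde\mu(a\tau^+ - b\tau^- < -t)$. Since by the standing hypothesis of this section $\ell_1, \ell_2 > 0$, each tail splits as a linear combination of $t^{-1/\beta_1}$ and $t^{-1/\beta_2}$ with constants proportional to $|b|^{1/\beta_1}$ and $|a|^{1/\beta_2}$ respectively, plus an $o(t^{-\gamma - 1/\beta})$ error for any $\gamma \in [0,1)$. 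By Corollary \ref{cor:density}, these asymptotics carry over from $\tilde\mu$ to $\hat\mu$ on $\Delta_0^-$.

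Next I would identify the dominant exponent under the hypothesis $\beta_\varphi = \beta = \max\{\beta_1,\beta_2\}$. Reading the definition \eqref{eq:def-B-phi}: if only $a \ne 0$ then $\beta_\varphi = \beta_2$ and the assumption forces $\beta_2 = \beta$; symmetrically for only $b \ne 0$; and if both are nonzero then automatically $\beta_\varphi = \beta$, both exponents contribute, and the one with the smaller value (i.e.\ $1/\beta$) dominates while the other is $O(t^{-1/\beta'})$ with $\beta' < \beta_\varphi$ and hence $o(t^{-1/\beta_\varphi})$. In every case one obtains constants $c_1, c_2 \ge 0$ with at least one strictly positive such that
\[
\hat\mu(\tau_{a,b} > t) = c_1 t^{-1/\beta_\varphi} + o(t^{-1/\beta_\varphi}) \quad \text{and} \quad \hat\mu(\tau_{a,b} < -t) = c_2 t^{-1/\beta_\varphi} + o(t^{-1/\beta_\varphi}),
\]
which is exactly $\tau_{a,b} \in \mathcal{D}_{1/\beta_\varphi}$.

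For the $L^2$ assertion when $\beta_\varphi \in (0,1/2)$ I would invoke the layer-cake identity
\[
\int \tau_{a,b}^2 \, d\hat\mu = \int_0^\infty 2t \bigl( \hat\mu(\tau_{a,b} > t) + \hat\mu(\tau_{a,b} < -t) \bigr) dt
\]
and observe that membership in $\mathcal{D}_{1/\beta_\varphi}$ bounds the integrand above by a constant times $t^{1 - 1/\beta_\varphi}$, which is integrable near infinity because $1/\beta_\varphi > 2$. The only real obstacle in the argument is the bookkeeping across the four sign cases of $(a,b)$ in the first step and the check that the ``surviving'' constant is indeed nonzero; once Proposition \ref{prop:tail-of-tau} and the definition \eqref{eq:def-B-phi} have been unpacked, no further analytic input is required.
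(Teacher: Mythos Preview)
Your proposal is correct and follows essentially the same route as the paper: a case split on the signs of $a,b$, feeding into the appropriate line of Proposition~\ref{prop:tail-of-tau} (with Remark~\ref{rem:dist-tau-a-b} for mixed signs), then checking that under $\beta_\varphi=\beta$ the surviving leading constant is nonzero. The only cosmetic difference is that you spell out the layer-cake identity for the $L^2$ conclusion, whereas the paper simply observes that a tail bound $\lesssim t^{-1/\beta_\varphi}$ with $1/\beta_\varphi>2$ forces $\tau_{a,b}\in L^2(\hat\mu)$.
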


\begin{proof}
Notice first of all that if  \( \tau_{a,b} \in \mathcal{D}_{1/\beta_\varphi} \) then,  in particular, \( \tilde \mu ( \pm \tau_{a,b} > t ) \lesssim t^{-1/\beta_{\varphi}} \) and so, if \( \beta_{\varphi} \in (0,1/2) \) we obtain that \( \tau_{a,b} \in L^2 ( \hat \mu ) \).
  Thus we just need to prove that \( \tau_{a,b} \in \mathcal{D}_{1/\beta_\varphi} \).

  Suppose first that  \( a, b \) do not have opposite signs, i.e. either \( a,b \geq 0 \) or \( a,b \leq 0 \), in which  case the distribution of \( \tau_{a,b} \) is determined by the first case of \eqref{eq:a+b-tail}. Therefore, taking \( \gamma = 0 \) we get 
  \[
   \tilde \mu ( \tau_{a,b} > t )  = C_a t^{-1/\beta_2} + C_b t^{-1/\beta_1} + o ( t^{- 1/\beta } ) 
    = c t^{-1/\beta} + o ( t^{-1/\beta})
  \]
  for some constant \( c > 0 \), which may be equal to \( C_{a} \), 
  \( C_{b}\), or \( C_{a}+C_{b},\) depending on the relative values of 
  \( \beta_{1}, \beta_{2}\). If \( a,b\geq 0 \), and exactly the same tail for
   \( \tilde  \mu ( \tau_{a,b} < - t) \) if \( a, b \leq 0 \). 
 By \eqref{eq:def-D-alpha} and the fact that \( \beta_{\varphi} = \beta\) we get 
  that  \( \tau_{a,b} \in \mathcal{D}_{1/\beta_\varphi} \), thus proving the result in this case. 
  If \( a \geq 0 \), \( b \leq 0 \)  the distribution of \( \tau_{a,b} \) is given by \eqref{eq:a-b-pstve-tail} and \eqref{eq:a-b-neg-tail} and so, taking \( \gamma = 0 \) gives 
    \[
    \tilde \mu ( \tau_{a,b} > t ) = C_a t^{-1/\beta_2}  + o (t^{-1/\beta}) = c_1 t^{-1/\beta} + o(t^{-1/\beta}) 
  \]
  and 
  \[
   \tilde  \mu ( \tau_{a,b} < - t ) = C_{|b|} t^{-1/\beta_1}  + o (t^{-1/\beta}) = c_2 t^{-1/\beta} + o(t^{-1/\beta}) 
  \]
  where \( c_{1} = C_{a}\) and \( c_{2}= C_{|b|}\) if \( \beta_2 = \beta\) and \( \beta_1 = \beta \) respectively, and equal to 0 otherwise.
  At least one of the \( c_1, c_2 \) has to be non-zero as \( \beta_{\varphi} = \beta \) implies that \( \varphi \) is non-zero at a fixed point corresponding to the largest of \( \beta_1, \beta_2 \) and so if \( \beta_1 = \max \{ \beta_1, \beta_2 \} \) we know from \eqref{eq:def-of-Ca-Cb} that \( c_2 = C_{|b|} > 0 \) and if \( \beta_2 = \max \{ \beta_1, \beta_2 \}  \) we know from \eqref{eq:def-of-Ca-Cb} that \( c_1 = C_a > 0 \). Thus, since \( \beta=\beta_{\varphi}\), we get \( \tau_{a,b} \in \mathcal{D}_{1/\beta_{\varphi}} \). If   \( a \leq 0 \), \( b \geq 0 \) the same argument holds exchanging  the roles of the positive and negative tails. 
\end{proof}


\begin{proof}[Proof of Corollary \ref{cor:tail-tau}]
We have already proved the result for \( \beta_{\varphi}=\beta\) in Lemma \ref{lem:cor} so we can assume  that \( \beta_{\varphi} \neq \beta \).  This implies that  \( \beta_1 \neq \beta_2 \) and  that \( \varphi \) is only non-zero at the fixed point corresponding to the smallest of the \( \beta_1, \beta_2 \). This situation can arrise in two ways: either (i) \( a \neq 0 \),  \( b = 0 \) and  \( \beta_{\varphi} = \beta_2 < \beta_1 \); or (ii) \( a = 0 \), \( b \neq 0 \) and \( \beta_\varphi = \beta_1 < \beta_2 \). We will assume (i) and give an explcit proof of the Lemma. The proof of the Lemma in situation (ii) then follows in the same way.

  Under our assumptions we know from Proposition \ref{prop:tail-of-tau} that the tail of \( \tau_{a,b} \) is determined by \eqref{eq:a+b-tail}, and we recall from \eqref{eq:def-of-Ca-Cb} that \( C_b = 0 \). If \( \beta_{\varphi} = \beta_2  = 0 \) then we know from the second line of \eqref{eq:a+b-tail} that 
  \begin{equation*}\label{eq:t}
   \hat \mu ( \pm \tau_{a,b} > t ) \lesssim t^{-\gamma -1 /\beta_1 }.
   \end{equation*}
    Since  \( \beta = \beta_1 < 1 \)  by assumption, we may choose  \( \gamma  \in [0,1 ) \) such that  \( \gamma + 1 / \beta_1 > 2 \) yielding \( \tau_{a,b} \in L^2( \hat \mu ) \). If \( \beta_{\varphi} \in (0,1/2) \) then the first line of \eqref{eq:a+b-tail} gives that 
   \[
    \hat \mu ( \pm \tau_{a,b} > t ) \lesssim \max \{ t^{-1/\beta_{\varphi}}, t^{-\gamma - 1/\beta} \} 
    \]
     for any \( \gamma \in [0,1) \). Choosing \( \gamma \) as before so that  \( \gamma + 1 /\beta_1 >  2 \) we again obtain that \( \tau_{a,b} \in L^2( \hat \mu ) \).
  If \( \beta_\varphi = \beta_2 \in [1/2,1) \) then, choosing \( \gamma \in [0,1) \) so that \( \gamma + 1 / \beta_1 > 1 / \beta_{2} \) we know from \eqref{eq:a+b-tail} that the non-zero tail of \( \tau_{a,b} \) is given by
  \[
     \hat \mu ( \pm \tau_{a,b} > t ) \lesssim   C_a t^{-1/\beta_2}  + o ( t^{-\gamma - 1/\beta_1 } ) 
    = C_a t^{-1/\beta_2} + o ( t^{-1/\beta_2}) 
  \]
  yielding  \( \tau_{a,b} \in \mathcal{D}_{1/\beta_\varphi} \).
\end{proof}

\subsubsection{Proof of Proposition \ref{prop:bounds-on-tilde-Phi}}

\begin{proof}[Proof of Proposition \ref{prop:bounds-on-tilde-Phi}]

  For a point \(x \in \delta_{i,j}^{-}\) we know that \(\tau(x) = i + j\) and that
  \begin{equation}
      \label{eq:orbit-of-delta-ij}
      g^{k}(x) \in \Delta_{i-k}^+  \quad  \forall \ 1 \leq k \leq i;\quad \text{ and } \quad
      g^{i + k}(x) \in \Delta_{j-k}^{-} \quad  \forall \ 1 \leq k \leq j - 1.
  \end{equation}
  Recall that by Proposition  \ref{prop:xn-and-Deltan} we have \(    1- x_n^{+}  \lesssim n^{- {1}/{\ell_2}},\)  and  \( |\Delta^{+}_{n}| \lesssim n^{-(1+1/\ell_{2})} \ll 1- x_n^{+}  \), which means that we can use the fact that   \( \tilde \varphi (1) = 0 \) and the H\"older continuity of \( \tilde \varphi_{(0,1]}\)  to obtain
  \begin{equation}
    \label{eq:bound-with-i}
    |\tilde \varphi \circ g^{k}(x)| \lesssim (1- x_{i-k}^{+})^{\nu_{2}}\leq (i-k)^{-\nu_{2}/\ell_{2}},
  \end{equation}
  for all \(  1 \leq k \leq i-1 \). Similarly, using the fact that  \( \tilde \varphi (-1) = 0 \) and the H\"older continuity of \( \tilde \varphi_{[-1,0)} \),
  \begin{equation}
    \label{eq:bound-with-j}
    |\tilde \varphi \circ g^{i+k}(x)| \lesssim (1+x_{j-k}^{+})^{\nu_{1}}\leq (j-k)^{-\nu_{1}/\ell_{1}},
  \end{equation}
  for all for \( 1 \leq k \leq j - 1\).
 For \( x \in \delta_{i,j} \) we know from \eqref{eq:bound-with-i} and \eqref{eq:bound-with-j} that
  \begin{equation}\label{eq:sum}
    |\tilde \Phi(x)|
    \lesssim \sum_{k=1}^{i-1} (i-k)^{-\frac{\nu_2}{\ell_2}} + 
    \sum_{k=1}^{j-1} (j-k)^{-\frac{\nu_1}{\ell_1}}. 
  \end{equation}
We now consider two cases. Suppose first that  \( \ell_1 < \nu_1 \) and \( \ell_2 < \nu_2 \).  Then \( |\tilde \Phi (x)| \) is uniformly bounded in \( x \), as both \eqref{eq:bound-with-i} and \eqref{eq:bound-with-j} are summable in \( k \) and therefore the sums in \eqref{eq:sum} both converge.  Therefore \( \tilde \Phi \in L^q ( \hat{\mu} ) \) for every \( q > 0 \), in particular \( \tilde \Phi \in L^2 ( \hat{\mu} ) \)
giving the first implication in \eqref{eq:bounds1}, and by Chebyshev's inequality,  \( \hat \mu ( \pm \tilde \Phi > t ) = O(t^{-q}) \) for every \( q > 0 \), giving the second implication in  \eqref{eq:bounds1}. Notice that we have not required in this case the conditions \ref{itm:H1} and \ref{itm:H2}. 

Now suppose that \( \ell_1 \geq  \nu_1 \) and/or \( \ell_2 \geq  \nu_2 \) and suppose also  that 
    \begin{equation}
    \label{eq:cond-on-nu}
  \nu_{1} > \frac{ \beta_{1} - 1/q }{ k_{2} } \quand
 \nu_{2} > \frac{ \beta_{2} - 1/q }{ k_{1} }.
  \end{equation}
  Notice that for \( q = 2 \) this gives exactly  condition \ref{itm:H1}  and for  \( q = 1/ \beta_{\varphi} \) this gives exactly  \ref{itm:H2}. We can also suppose without loss of generality that in fact \( \ell_1 > \nu_1 \) and/or \( \ell_2 > \nu_2 \) since we can  decrease slightly the H\"older exponent while still satisfying \eqref{eq:cond-on-nu}. 
In this case the sums in \eqref{eq:sum} diverge but  admit the following bounds: 
    \[
    |\tilde \Phi(x)|
    \lesssim \sum_{k=1}^{i-1} (i-k)^{-\frac{\nu_2}{\ell_2}} + 
    \sum_{k=1}^{j-1} (j-k)^{-\frac{\nu_1}{\ell_1}} 
    \lesssim  i^{1 -\frac{\nu_2}{\ell_2}} + j^{1 - \frac{\nu_1}{\ell_1}}.
  \]
We can then bound the integral by 
  \begin{align*}
      \int_{\Delta_0^-} |\tilde{\Phi} (x) |^q dm
      & \lesssim \left[
          \sum_{i = 1}^{\infty}\sum_{j=1}^{\infty}
          |\delta_{i,j}|  \left(
              i^{1 - \frac{\nu_2}{\ell_2}}
              + j^{1 - \frac{\nu_1}{\ell_1}}
          \right)^q
      \right] 
  \end{align*}
Then, since  \( | \delta_{i,j}| = O( i^{- ( 1 + 1/\beta_2 )} j^{ -( 1 +  1 / \beta_1 )}) \) we get 
  \begin{align*}
      \int_{\Delta_0^-} |\tilde{\Phi} (x) |^q dm
           & \lesssim  \left[
          \sum_{i,j = 1}^{\infty}
          \frac{
              i^{q - \frac{q\nu_2}{\ell_2}}
          }
          {
              i^{1 + \frac{1}{\beta_2}}
              j^{1 + \frac{1}{\beta_1}}
          }
          + \sum_{i,j=1}^{\infty}
          \frac{
              j^{q - \frac{q\nu_1}{\ell_1}}
          }
          {
              i^{1 + \frac{1}{\beta_2}}
              j^{1 + \frac{1}{\beta_1}}
          }
        \right] 
        \lesssim
          \sum_{i = 1}^{\infty}
              i^{q - \frac{q\nu_2}{\ell_2} - 1 - \frac{1}{\beta_2}}
          + \sum_{j=1}^{\infty}
              j^{q - \frac{q\nu_1}{\ell_1} - 1 - \frac{1}{\beta_1}}
  \end{align*}
The latter sums are bounded   exactly when \eqref{eq:cond-on-nu} holds. As mentioned above, for \( q= 2 \) this is exactly condition \ref{itm:H1} and therefore we get that \( \tilde \Phi \in L^{2} ( \hat \mu ) \). 
 For \( q = 1/\beta_{\varphi} \) this is exactly condition \ref{itm:H2} and therefore we get that \( \tilde \Phi \in L^{q} ( \hat \mu  )\). In fact if \eqref{eq:cond-on-nu} holds for \( q = 1/\beta_{\varphi} \) then  there exists some \( \eps > 0 \) such that \eqref{eq:cond-on-nu} holds for all \( q \in [1/\beta_{\varphi}, 1/\beta_{\varphi} + \eps)\) and therefore 
  \(  \tilde \Phi \in L^{q} ( \hat \mu  )\) for every \( q \in [1/\beta_{\varphi}, 1/\beta_{\varphi} + \eps)\). From this and   Chebyschev's inequality we get \( \hat \mu ( \pm \tilde \Phi > t ) = o ( t^{ - 1 / \beta_{\varphi} } )\).
\end{proof}
\medskip

\medskip

\noindent
\textbf{Acknowledgements}
\\
We would like to thank Emanuel Carneiro, Sylvain Crovisier, and Santiago Martinchich
for their comments and suggestions regarding early versions of this work.
\\
\\
\noindent
D.C. was partially supported by the ERC project 692925 \emph{NUHGD} and by the Abdus Salam ICTP visitors program.
\\
\noindent
The authors declare that there are no other competing interests relevant to the research communicated in this paper.

\printbibliography
\end{document}